\begin{document}

\newtheorem{theorem}{Theorem}[section]
\newtheorem{lemma}[theorem]{Lemma}
\newtheorem{definition}[theorem]{Definition}
\newtheorem{example}[theorem]{Example}
\newtheorem{proposition}[theorem]{Proposition}
\newtheorem{corollary}[theorem]{Corollary}
\newtheorem{conjecture}[theorem]{Conjecture}
\newtheorem{remark}{Remark}
\newcommand{\wta}{{\rm {wt} }  a }
\newcommand{\R}{\frak R}
\def \<{\langle}
\def\LL{\mathcal{L}}
\def \>{\rangle}
\def \t{\tau }
\def \a{\alpha }
\def \e{\epsilon }
\def \l{\lambda }
\def \ga{\gamma }
\def \L{ L}
\def \b{\beta }
\def \om{\omega }
\def \o{\omega }
\def \c{\chi}
\def \ch{\chi}
\def \cg{\chi_g}
\def \ag{\alpha_g}
\def \ah{\alpha_h}
\def \ph{\psi_h}
\def \gi {\gamma}
\def\supp{{\rm{supp}}}
\def\GG{\mathcal{G}}
\def\NN{\mathcal{N}}
\newcommand{\wtb}{{\rm {wt} }  b }
\newcommand{\bea}{\begin{eqnarray}}
\newcommand{\eea}{\end{eqnarray}}
\newcommand{\be}{\begin {equation}}
\newcommand{\ee}{\end{equation}}
\newcommand{\g}{\frak g}
\newcommand{\tg}{\tilde {\frak g} }
\newcommand{\hg}{\hat {\frak g} }
\newcommand{\hb}{\hat {\frak b} }
\newcommand{\hn}{\hat {\frak n} }
\newcommand{\h}{\frak h}
\newcommand{\wt}{{\rm {wt} }   }
\newcommand{\V}{\cal V}
\newcommand{\hh}{\hat {\frak h} }
\newcommand{\n}{\frak n}
\newcommand{\Z}{\mathbb Z}
\newcommand{\lar}{\longrightarrow}
\newcommand{\X}{\mathfrak{X}}

\newcommand{\Zp}{{\Bbb Z}_{\ge 0} }

\newcommand{\N}{\mathbb{N}}
\newcommand{\C}{\mathbb C}
\newcommand{\Q}{\Bbb Q}
\newcommand{\WW}{\boldsymbol{ \mathcal{W}}}
\def\gl{\mathfrak{gl}}
\def\sl{\mathfrak{sl}}
\newcommand{\la}{\langle}
\newcommand{\ra}{\rangle}
\newcommand{\bb}{\mathfrak{b}}
\newcommand{\triplet}{\mathcal{W}(p)}
\newcommand{\striplet}{\mathcal{SW}(m)}

\newcommand{\nordplus}{\mbox{\scriptsize ${+ \atop +}$}}
\newcommand{\ds}{\displaystyle}
\newcommand{\halmos}{\rule{1ex}{1.4ex}}
\newcommand{\pfbox}{\hspace*{\fill}\mbox{$\halmos$}}
\newcommand{\epfv}{\hspace*{\fill}\mbox{$\halmos$}}
\newcommand{\nb}{{\mbox {\tiny{ ${\bullet \atop \bullet}$}}}}
\newcommand{\nn}{\nonumber \\}

\newcommand{\HH}{\widetilde{\mathcal{T}}}
\newcommand{\vak}{\bf 1}

\def\sect#1{\section{#1}\setcounter{equation}{0}\setcounter{rema}{0}}
\def\ssect#1{\subsection{#1}}
\def\sssect#1{\subsubsection{#1}}

\keywords{Wakimoto modules, Whittaker modules, critical level, Virasoro algebra}
\title[]{
  Whittaker modules for  the affine Lie algebra $A_1 ^{(1)}$   }

  \subjclass[2000]{
Primary 17B69, Secondary 17B67, 17B68, 81R10}

\author{ Dra\v zen Adamovi\' c }

\curraddr{D.A. Department of Mathematics, University of Zagreb,
Bijeni\v cka 30, 10 000 Zagreb, Croatia} \email {adamovic@math.hr}

\author{Rencai L\"u}
\curraddr{R.L. Department of Mathematics, Soochow
University,  Suzhou,  P. R. China}
\email{rencail@amss.ac.cn}

\author{Kaiming Zhao}
\curraddr{K.Z. Department of Mathematics, Wilfrid Laurier
University, Waterloo, Ontario, N2L 3C5, Canada; and College of
Mathematics and Information Science, Hebei Normal (Teachers)
University, Shijiazhuang 050016, Hebei, P. R. China }
\email{kzhao@wlu.ca}

\markboth{Dra\v zen Adamovi\' c, Rencai L\"u and Kaiming Zhao } { Whittaker modules for $A_1 ^{(1)} $}
\bibliographystyle{amsalpha}
\pagestyle{myheadings}

\begin{abstract}
We prove the irreducibility of the  universal non-degenerate Whittaker modules for
the affine Lie algebra $\widehat{\sl_2}$ of type $A_1^{(1)}$ with noncritical level. These modules can become simple Whittaker modules over $\widetilde{\sl_2} =\widehat{\sl_2} + {\C} d $ with the same Whittaker function and central charge. We have to modulo a central character for ${\sl_2}$ to obtain
simple degenerate Whittaker $\widehat{\sl_2} $-modules with noncritical level.
In the case of critical level the universal Whittaker module is
reducible.  We prove that the quotient of universal
Whittaker $\widehat{\sl_2}$--module by a submodule generated by a scalar action of central
elements of the  vertex algebra $V_{-2}(\sl_2)$ is simple as $\widehat{\sl_2}$--module. We also explicitly describe the simple quotients of universal Whittaker modules at the critical level for $\widetilde{\sl_2}$. Quite surprisingly, with the same Whittaker function some simple degenerate $\widetilde{\sl_2}$ Whittaker modules at the critical level   have semisimple action of $d$ and others have  free action of $d$.
At last, by using vertex algebraic techniques we  present a Wakimoto type
construction of a family of  simple generalized Whittaker  modules for $\widehat{\sl_2}$ at the critical level. This family includes all classical Whittaker modules at critical level. We also have Wakimoto type realization for degenerate Whittaker  modules for $\widehat{\sl_2}$ at noncritical level.
\end{abstract}

\maketitle

\tableofcontents

\section{Introduction}

Recently, Whittaker modules over various Lie algebras are attracting a lot of
attention from mathematicians. See \cite{MZ2} and the references
there.
 In Block's classification of all simple modules for the
three-dimensional simple Lie algebra $\sl_2$, they fall into two
families: highest (lowest) weight modules,  and a family which are simple modules over a Borel subalgebra
of $\sl_2$ including Whittaker modules (see \cite{B}). Whittaker modules are an important class
of simple modules. Kostant defined and systematically studied
in  \cite{Ko} Whittaker modules for an arbitrary finite dimensional
complex semisimple Lie algebra $\g$. He showed that these modules
with a fixed regular Whittaker function (Lie homomorphism) on a nilpotent
radical are (up to isomorphism) in bijective correspondence with
central characters of $U(\g)$.

McDowell  studied in  \cite{Mc} a category of modules for an
arbitrary finite-dimensional complex semisimple Lie algebra $\g$
which includes the Bernstein-Gelfand-Gelfand category $\mathcal {O}$
as well as those Whittaker modules where the Whittaker  function   on a nilpotent radical may be irregular (degenerate). The
simple objects in this category are constructed by inducing
over a parabolic subalgebra $\mathfrak{p}$ of $\g$ from a simple
Whittaker module (in Kostant's sense) or from a highest weight
module for the reductive Levi factor of $\mathfrak{p}$ (when the Whittaker function is zero).

The study on Whittaker modules over the Virasoro algebra was originated  in \cite{OW1, OW2}. Complete and more general results were obtained in \cite{GLZ, MZ2}. The paper \cite{LZ} completely investigated Whittaker modules over the  Heisenberg-Virasoro algebra.

Affine Lie algebras are the most extensively studied and most useful
ones among infinite-dimensional Kac-Moody  algebras.  The integrable highest weight modules were the first class of
representations over affine Kac-Moody  algebras being extensively studied,
see \cite{K} for detailed discussion of results and further bibliography.
In \cite{Ch}, Chari classified all simple integrable weight modules
with finite-dimensional weight spaces over the untwisted affine Lie
algebras. Chari and Pressley \cite{CP2}, then extended this
classification to all affine Lie algebras.  Verma-type modules
were first studied by Jakobsen and Kac \cite{JK}, and then by Futorny
\cite{Fu1,Fu2}.

Very recently, a complete classification for all simple weight
modules with finite-dimensional weight spaces over affine Lie
algebras were obtained in \cite{FT, DG}.  Naturally, the next important task
is to study simple weight modules with infinite-dimensional
weight spaces and simple non-weight modules.  Besides the simple modules constructed
in \cite{CP1}, a  class of
simple weight modules over affine Lie algebras with
infinite-dimensional weight spaces were constructed  in \cite{BBFK}.
A complete classification for all simple (weight and
non--weight) modules over affine Lie algebras with locally nilpotent
action of the nilpotent radical were obtained in \cite{MZ}. All  simple (weight and
non-weight) modules over untwisted affine Lie algebras with locally finite
action of the nilpotent radical were classified in \cite{GZ}, where the structure of simple Whittaker modules were unclear.

A class of simple non--weight  modules for untwisted affine Lie algebras
from simple Whittaker modules over the subalgebra generated by
imaginary root spaces (isomorphic to an infinite dimensional
Heisenberg algebra) were constructed in \cite{Ch1}. These modules are
called imaginary Whittaker modules since they are different from the
above Whittaker modules in nature. We should mention that imaginary Whittaker modules from \cite{Ch1} are not modules for affine vertex algebras.

 It is natural to investigate  Whittaker modules over  affine Kac-Moody algebras in
the more general setting as in \cite{Mc}, i.e.,   the Whittaker function  on the nilpotent radical may be irregular.  These Whittaker modules can be considered as modules of affine vertex algebras of a suitable level. To distinguish from imaginary Whittaker modules we sometimes call these Whittaker modules as {\it classical Whittaker modules}. The
objective of the present paper is to completely determine the structure of classical Whittaker modules over the
 affine Kac-Moody algebra $A_1 ^{(1)}$ including $\widehat{\sl_2}$ (the universal central extension of the loop algebra of $\sl_2$) and $\widetilde{\sl_2}=\widehat{\sl_2}+\C d $ (where $d$ is the degree operator).
See Section \ref{preliminaries} for the notations.

 Let us here describe the main results of the paper. Let $(\lambda, \mu ) \in {\C} ^2$, and $V_{\widehat{\sl_2} } (\lambda, \mu, \kappa)$ be the universal Whittaker module of level $\kappa$ generated by vector $w_{\lambda, \mu, \kappa}$ such that $e(0) w_{\lambda, \mu, \kappa} = \lambda w_{\lambda, \mu, \kappa} $, $f(1) w_{\lambda, \mu, \kappa} = \mu w_{\lambda, \mu, \kappa}$
 (for details see the last part of Section \ref{whittaker}).
 If $\lambda \cdot \mu \ne 0$ we call such Whittaker modules {\it non-degenerate}. We describe all the simple Whittaker modules for $\widehat{\sl_2}$ and $\widetilde{\sl_2}$. (For notations, see Section \ref{whittaker}.)

 \begin{theorem}
 Assume that $\lambda \in {\C} ^*$, $\mu, \kappa \in {\C}$.
\begin{itemize}
 \item[(i)] Let $\mu \ne 0$. Then $V_{\widehat{\sl_2} } (\lambda, \mu, \kappa)$ is a simple $\widehat{\sl_2}$--module  at level  $\kappa \ne -2$.
 \item[(i')] Let $\mu \ne 0$ and  $d= -L(0) + a$, $a \in {\C}$. Then  $V_{\widehat{\sl_2} } (\lambda, \mu, \kappa)$  is an simple $\widetilde{\sl_2}$--module at level ${\kappa}\ne-2$.
 \item[(ii)] For every  $c (z) = \sum_{n \le  0} c _n z ^{-n-2} \in {\C}((z))$,
 $$ V_{\widehat{\sl_2} } (\lambda, \mu, -2 , c (z) ) = V_{\widehat{\sl_2} } (\lambda, \mu, -2) / \langle (T(n) - c_n ) w_{\lambda, \mu, -2}, \ n \le 0 \rangle $$
 is  a simple $\widehat{\sl_2}$--module at the critical level.
 \item[(ii')]Let $\mu \ne 0$. The induced module
 $${\rm Ind}_{\widehat{\sl_2}}  ^{\widetilde{\sl_2}} V_{\widehat{\sl_2} } (\lambda, \mu, -2, c (z) )$$
 is simple Whittaker module at the critical level.\end{itemize}
 Modules described above provide a complete list of non-degenerate simple Whittaker modules for  $\widehat{\sl_2}$ and $\widetilde{\sl_2}$.
 \end{theorem}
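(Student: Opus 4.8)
The plan is to treat part (i) as the analytic heart and to deduce the remaining parts from it. Write $V=V_{\widehat{\sl_2}}(\lambda,\mu,\kappa)$. By the PBW theorem $V$ has a basis obtained by applying ordered monomials in the modes $\{x(-n)\mid n\ge 1,\ x\in\{e,f,h\}\}\cup\{f(0),h(0)\}$ to the cyclic Whittaker vector $w=w_{\lambda,\mu,\kappa}$. First I would record the only structure that matters: $e(n)$ $(n\ge 1)$, $h(n)$ $(n\ge 1)$ and $f(n)$ $(n\ge 2)$ all annihilate $w$, while $e(0)w=\lambda w$ and $f(1)w=\mu w$ with $\lambda,\mu\ne 0$. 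I would then filter $V$ by the homogeneous (loop) degree $\deg x(n)=n$. The subtlety that shapes the whole argument is that this grading is broken \emph{only} by the relation $f(1)w=\mu w$ (a degree $+1$ operator acting as a scalar), so the grading survives as a filtration and the horizontal subalgebra $\sl_2=\langle e(0),f(0),h(0)\rangle$ acts degree-preservingly, i.e. cleanly on leading terms.

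The key computation is to identify the associated graded module: in $\mathrm{gr}\,V$ the relation $f(1)w=\mu w$ degenerates to $f(1)\bar w=0$, while $e(0)\bar w=\lambda\bar w$ persists. Hence $\mathrm{gr}\,V\cong U(\widehat{\sl_2})\otimes_{U(P)}W_\lambda$, induced from $P=\sl_2\otimes\C[t]\oplus\C c$ of the horizontal $\sl_2$--Whittaker module $W_\lambda=U(\sl_2)\otimes_{U(\C e(0))}\C_\lambda$, with $\sl_2\otimes t\C[t]$ acting by $0$. Two facts then drive the proof: (a) $W_\lambda$ is irreducible over $\sl_2$ (Kostant, since $\lambda\ne 0$); and (b) for $\kappa\ne -2$ the invariants of $\mathrm{gr}\,V$ under all $x(n)$, $n\ge 1$, are concentrated in the top (degree $0$) component --- this is exactly where the noncritical hypothesis enters, through nondegeneracy of the relevant energy pairing. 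Granting (b), I would show that for any nonzero submodule $N$ the leading homogeneous degree of a nonzero element can be raised to $0$ by applying the $w$--annihilating positive operators $e(n),h(n)$ $(n\ge 1)$, $f(n)$ $(n\ge 2)$ (which act by their symbols and cannot overshoot degree $0$). The degree-$0$ leading space of $N$ is then a nonzero horizontal $\sl_2$--submodule of $V_0\cong W_\lambda$, hence all of it by (a); in particular $N$ contains a vector with leading term $w$, and a last application of (b) in its dual (cogeneration) form strips off the lower-order tail to give $w\in N$, so $N=V$.

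Part (i') is then immediate: with $d:=-L(0)+a$ one has $[d,x(m)]=m\,x(m)$ and $[d,c]=0$, so $V$ is a $\widetilde{\sl_2}$--module, and every $\widetilde{\sl_2}$--submodule is a fortiori $\widehat{\sl_2}$--stable, hence $0$ or $V$ by (i). For part (ii) I would run the same machinery at $\kappa=-2$: $W_\lambda$ is still irreducible, but the Sugawara field degenerates and the extra invariants obstructing (b) are exactly the modes of the central field $T(z)$, which generate the center $M_T(0)\cong\C[T(-n)\mid n\ge 2]$. After passing to the quotient $V_{\widehat{\sl_2}}(\lambda,\mu,-2,c(z))$, in which each $T(n)$ acts by the scalar $c_n$, these invariants become scalars and produce no proper submodule, so the top-capturing argument goes through verbatim. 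Part (ii') follows by the standard induction argument: since at critical level there is no $L(0)$ to pin $d$ inside a completion of $U(\widehat{\sl_2})$, the operator $d$ acts freely on $\mathrm{Ind}_{\widehat{\sl_2}}^{\widetilde{\sl_2}}$, and taking an element of minimal $d$--degree and applying the $\widehat{\sl_2}$--irreducibility of (ii) to its top coefficient (using $[d,x(m)]=m\,x(m)$) forces any nonzero submodule to contain $1\otimes V_{\widehat{\sl_2}}(\lambda,\mu,-2,c(z))$, hence everything.

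For the completeness statement I would invoke Schur's lemma in Dixmier's form (valid as all modules here have countable dimension over the uncountable field $\C$). Any irreducible non-degenerate Whittaker module $M$ has $c$ acting by a scalar $\kappa$ and $\hn_+$ acting through $(\lambda,\mu)$, so $M$ is a quotient of $V_{\widehat{\sl_2}}(\lambda,\mu,\kappa)$ (resp. of its $\widetilde{\sl_2}$ analogue). If $\kappa\ne -2$, that universal module is already irreducible by (i), so $M$ is isomorphic to it, and over $\widetilde{\sl_2}$ the operator $d+L(0)$ commutes with everything, hence is a scalar $a$, forcing $d=-L(0)+a$; if $\kappa=-2$, the central modes $T(n)$ act by scalars $c_n$, so $M$ is a quotient of the irreducible module of (ii) and thus isomorphic to it, while over $\widetilde{\sl_2}$ the behaviour is dictated by whether $d$ acts freely (the induced modules of (ii')) or semisimply. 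This exhausts the list. I expect the genuine obstacle to be step (b): proving that the $x(n)$-invariants $(n\ge 1)$ of the associated graded module lie only in the top, equivalently that the Whittaker deformation annihilates the entire maximal submodule of the underlying Verma-type object, since this is precisely where the free Cartan direction $h(0)$, the non-gradedness coming from $f(1)w=\mu w$, and the critical-versus-noncritical dichotomy all interact.
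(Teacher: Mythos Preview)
Your approach differs substantially from the paper's, and it contains a genuine error that makes the argument prove too much.

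The fatal problem is claim (a). The module $W_\lambda = U(\sl_2)\otimes_{U(\C e)}\C_\lambda$ is the \emph{universal} Whittaker module for $\sl_2$, and for $\lambda\ne 0$ it is \emph{not} irreducible: Kostant's theorem says precisely that its irreducible quotients are parametrized by central characters. Concretely, on $W_\lambda \cong \C[h,f]\cdot w$ the Casimir acts as the non-scalar operator $2\lambda f + \tfrac12 h^2 + h$, and $(\mathrm{Cas}-c)W_\lambda$ is a proper $\sl_2$--submodule for every $c\in\C$. Now observe that your steps (a)--(b) make no use of $\mu\ne 0$: the associated graded $\mathrm{gr}\,V \cong V_{\widehat{\sl_2}}(\lambda,0,\kappa)$ regardless of $\mu$. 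So if (a) and (b) were both valid the argument would already prove $V_{\widehat{\sl_2}}(\lambda,0,\kappa)$ irreducible, which is false. Indeed the proper submodule $N=U(\widehat{\sl_2})(L(0)-a)w$ has $N_0$ equal to the proper $\sl_2$--submodule $(\mathrm{Cas}/2(\kappa+2)-a)W_\lambda\subsetneq W_\lambda$, so the step ``$N_0\ne 0\Rightarrow w\in N$'' fails exactly there. Step (b) is likewise only asserted; the phrase ``nondegeneracy of the relevant energy pairing'' does not identify a proof, and locating the $(\sl_2\otimes t\C[t])$--invariants of the induced module $\mathrm{gr}\,V$ is essentially as hard as the original problem.

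The paper avoids the filtration picture entirely. For (i) its key move is to trade $f(n)$ for the Sugawara operators $L(n)$: Lemma~3.1 shows that $V_{\widehat{\sl_2}}(\lambda,\mu,\kappa)$ is cyclic over $\widehat{\frak b}\rtimes\mathrm{Vir}$, with Whittaker data $e(0)\mapsto\lambda$, $L(1)\mapsto\lambda\mu/(\kappa+2)\cdot(\kappa+2)=\lambda\mu$. A direct PBW leading-term argument in this smaller algebra (Lemmas~3.3--3.4, Corollary~3.5, Proposition~3.6) then gives simplicity whenever $\lambda\mu\ne 0$; that is where $\mu\ne 0$ actually enters. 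Part (ii) is handled analogously with $T(n)$ in place of $L(n)$ (Lemma~3.9). For (ii$'$) your outline is close, but the operator you reach for is not the right one: elements $x(m)\in\widehat{\sl_2}$ do not lower the $d$--degree of $\sum_j d^j\otimes v_j$ (the top coefficient just becomes $x(m)v_{\max}$), so $\widehat{\sl_2}$--irreducibility alone does not force $1\otimes M$ into a submodule. The paper uses instead the central element $T(1)$, which acts as the nonzero scalar $\lambda\mu$ on $M$ and satisfies $[d,T(1)]=T(1)$; hence $(T(1)-\lambda\mu)$ strictly lowers $d$--degree, and iterating lands in $1\otimes M$ (Theorem~4.1). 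This is again where $\mu\ne 0$ is essential, and the same idea (via $T(1)^iT(-i)\in Z(U(\widetilde{\sl_2})\text{ on }V)$) is what pins down the central character for the completeness statement in the critical $\widetilde{\sl_2}$ case.
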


This theorem has a long proof which will be broken into many cases, see Remark \ref{remark-proof}   in Section \ref{section-whittaker-widetilde-sl2}. In the proof of these results we show that simple Whittaker modules for $\widehat{\sl_2}$ are simple modules for the Lie algebra $\hat{\frak b} \rtimes {\rm{Vir}}$ (see Section \ref{def-borel-virasoro}) in the non-critical case and for the Lie algebra $\hat{\frak b} \rtimes \HH $ (see Section \ref{def-borel-virasoro-critical})  in the case of critical level.

The only case which is not completely described in the above theorem for $\widehat{\sl_2}$ is the case of degenerate Whittaker modules outside the critical level.
We prove in Theorem \ref{ired-non-crit} that simple degenerate Whittaker modules of type $(\lambda,0)$ with noncritical level (${\kappa}\ne-2$) are simple quotients of the $\widehat{\sl_2}$--module
$$  M_{\widehat{\sl_2} } (\lambda, 0 , \kappa, a ):= V_{\widehat{\sl_2} } (\lambda, 0 , \kappa) /   U ({\widehat{\sl_2} }) (L(0)- a) w_{\lambda, 0, \kappa}, \quad (a \in {\C}). $$

Irreducible degenerate  Whittaker $\widetilde{\sl_2}$-modules with critical level are quite complicated which are completely determined in Theorem \ref{thm-critical-case2}.
Surprisingly, with the same Whittaker function,  simple $\widetilde{\sl_2}$ Whittaker modules at critical level can have semisimple or free action of $d$.

The second part (Sections \ref{section-Wakimoto-1} - \ref{realization-d}) of our paper is devoted to the explicit  realizations of simple Whittaker modules over $\widehat{\sl_2}$ and $\widetilde{\sl_2}$. We use the concept of Wakimoto modules for $\widehat{\sl_2}$  and the theory of vertex algebras. At the critical level we present explicit realization of a family of  simple Whittaker modules which include all classical (degenerate and non-degenerate) simple Whittaker modules. Let us present this result in more details.

Let $M$ be a Weyl vertex algebra generated by the fields $a(z)=\sum_{n \in {\Z} } a(n) z ^{-n-1}$, $a^*(z) = \sum_{n \in {\Z} } a^*(n) z ^{-n-1}$ such that the components of these fields satisfies the commutation relations (\ref{comut-Weyl}) in Section \ref{Weyl} for the infinite dimensional Weyl algebra. Let $\pi ^{2 (\kappa+2)}$ be Heisenberg vertex algebra associated to the representations of the Heisenberg Lie algebra of level $ 2(\kappa+2)$.
 Construction of Wakimoto modules is based on the embedding of the affine vertex algebra $V_\kappa(\sl_2)$ into $M \otimes \pi ^{2 (\kappa+2)}$ (cf. \cite{efren}, \cite{W-mod}). This implies that for any  $M$--module $M_1$ and any $\pi ^{2 (\kappa+2)}$--module $N_1$ the tensor product $M_1 \otimes N_1$ is a module for  $V_\kappa(\sl_2)$. This construction was usually applied on highest weight modules for $M$ and  $\pi ^{2 (\kappa+2)}$. In the present paper we shall apply this construction of Whittaker modules for   $M$ and  $\pi ^{2 (\kappa+2)}$.

For $(\lambda, \mu) \in {\C} ^2$ let $M_1(\lambda, \mu)$ be the module for the Weyl algebra generated by the Whittaker vector $v_1$ such that
$$a(0) v_1 = \lambda v_1, \ a^* (1) v_1 = \mu v_1, \ a(n+1) v_1 = a^* (n+2) v_1 = 0 \quad (n \ge 0). $$

By using Whittaker module $M_1(\lambda, \mu)$ and certain Whittaker modules for the Heisenberg vertex algebra we construct a family of modules on arbitrary level
(see Section \ref{whitt-wak}). In the case of critical level, $\pi^0$ is a commutative vertex algebra and their simple (Whittaker) representations are $1$--dimensional, so our Wakimoto modules will be actually realized on $M_1(\lambda, \mu)$.
We prove in Theorem \ref{ired-wak-1}:

\begin{theorem}
For every $\chi(z) \in {\C}((z))$, $(\lambda, \mu) \in {\C} ^2$, $\lambda \ne 0$  there exists simple $\widehat{\sl_2}$--module $\overline{M_{Wak}} (\lambda, \mu, -2, \chi(z) )$ realized on the $M$--module $M_1(\lambda, \mu)$ such that
\bea
e(z)   &=& a(z); \nonumber \\
h(z) &=& -2 : a^{*}(z)  a (z) : + \chi(z) ; \nonumber \\
f(z)   &=& - : a^{*} (z) ^2 a (z) : -2 \partial_z a^{*} (z) + a^{*}(z) \chi (z) \nonumber
\eea
\end{theorem}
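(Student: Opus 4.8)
\emph{Setting up the module.} The plan is first to produce the module structure from the Wakimoto embedding and then to prove irreducibility by showing that the $\widehat{\sl_2}$--action is ``as large as'' the full Weyl algebra action on $M_1(\lambda,\mu)$. For the first step I would invoke the embedding $V_\kappa(\sl_2)\hookrightarrow M\otimes\pi^{2(\kappa+2)}$ recalled in the introduction, specialized to $\kappa=-2$. Since $2(\kappa+2)=0$, the Heisenberg vertex algebra $\pi^0$ is commutative and its irreducible (Whittaker) representations are one--dimensional, the generating field acting by a scalar series $\chi(z)\in\C((z))$. Tensoring the Weyl--module $M_1(\lambda,\mu)$ with such a one--dimensional $\pi^0$--module and transporting the embedding produces exactly the formulas for $e(z),h(z),f(z)$ in the statement; because $M_1(\lambda,\mu)$ is a restricted (smooth) $M$--module, every normally ordered product is a well defined field on it, so this realizes $M_1(\lambda,\mu)$ as a restricted $\widehat{\sl_2}$--module of level $-2$. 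Equivalently one may check the $\widehat{\sl_2}$ relations by a direct OPE computation, using that at level $0$ the Heisenberg field has no self--contraction and may therefore be replaced by the $c$--number $\chi(z)$ without spoiling any bracket.

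\emph{Reducing irreducibility to a generation statement.} Let $N\subseteq M_1(\lambda,\mu)$ be a nonzero $\widehat{\sl_2}$--submodule. I would first record that $M_1(\lambda,\mu)$ is irreducible as a module over the Weyl algebra $M$: the Whittaker module of this datum with $\lambda\ne0$ is simple, since $a(0)$ is then invertible. Consequently it suffices to prove that $N$ is stable under the whole Weyl algebra, i.e. that all $a(n),a^{*}(n)$ preserve $N$; then $N=M_1(\lambda,\mu)$ and we are done. Since $e(z)=a(z)$, every $a(n)$ already preserves $N$, and since $h(n)=-2\,:a^{*}a:(n)+\chi_n$ with $\chi_n\in\C$ a scalar, every mode $:a^{*}a:(n)$ preserves $N$ as well. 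The whole problem is therefore to recover the bare field $a^{*}(z)$.

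\emph{Recovering $a^{*}$: the main obstacle.} This is the crux, and it is precisely where $\lambda\ne0$ is used. On $M_1(\lambda,\mu)$ the zero mode $a(0)=e(0)$ acts as $\lambda\,\mathrm{id}$ plus a locally nilpotent operator, hence is invertible, and its inverse preserves any $e(0)$--stable subspace, in particular $N$. The linear--in--$a^{*}$ part of $f(z)$, namely $-2\partial_z a^{*}(z)+a^{*}(z)\chi(z)$, supplies a bare $a^{*}$, while the cubic term $-:a^{*}(z)^2 a(z):$ carries an explicit factor $a(z)=e(z)$. The plan is to strip this factor using $e(0)^{-1}$ together with the already available operators $a(n)$ and $:a^{*}a:(n)$: filtering by degree in $a^{*}$ and inducting, one expresses each $a^{*}(m)$ acting on $N$ as a combination of $f(n),h(n),e(n)$ and $e(0)^{-1}$ applied to vectors of $N$, whence $a^{*}(m)N\subseteq N$. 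Carrying out this bookkeeping is the main technical obstacle; the conceptual content, however, is simply that once $\lambda\ne0$ the field $e(z)=a(z)$ is invertible enough to reconstruct the free fields $a,a^{*}$ from $e,h,f$.

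\emph{Conclusion and comparison.} Once $N$ is shown to be a Weyl--submodule it must equal $M_1(\lambda,\mu)$, which proves irreducibility. Finally I would observe that the Sugawara field $T(z)$, pushed through the realization, becomes a scalar series in $\chi(z)$ and $\partial_z\chi(z)$ only (this is the critical--level feature that the $a,a^{*}$ contributions of $:ef:+:fe:$ cancel those of $\tfrac12:hh:$), so the central elements $T(n)$ act on $\overline{M_{Wak}}(\lambda,\mu,-2,\chi(z))$ by scalars. Choosing $\chi(z)$ so that in addition $h(n)v_1=0$ for $n\ge1$ recovers the critical--level modules $V_{\widehat{\sl_2}}(\lambda,\mu,-2,c(z))$ of Theorem~1.1(ii), so the present family indeed contains all classical Whittaker modules at the critical level while allowing the more general (generalized Whittaker) action coming from arbitrary $\chi(z)$.
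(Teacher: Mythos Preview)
Your setup of the module via the Wakimoto embedding at $\kappa=-2$ is correct and matches the paper (Section~6).

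Your irreducibility argument, however, diverges from the paper and its crucial step is not justified. The paper never tries to recover the bare operators $a^{*}(m)$ and never uses $f(z)$ or any inverse of $e(0)$. It observes instead that, up to the scalar $\chi(z)$, the field $h(z)$ coincides with $\varphi(z)=-2\,{:}a^{*}(z)a(z){:}$, and proves directly (Lemma~7.1, Proposition~7.2) that $M_1(\lambda,\mu)$ is already irreducible for the affinization $\widehat{\mathfrak b_1}$ of the two--dimensional Lie algebra spanned by $e$ and $\varphi$. The argument is a leading--term computation: from any nonzero $w$, suitable $a(n)$ with $n\ge 0$ strip all $a^{*}$--factors, and then a product $\varphi(i_1)\cdots\varphi(i_r)$ converts $a(-i_1)\cdots a(-i_r)v_1$ into a nonzero multiple of $\lambda^{r}v_1$; cyclicity of $v_1$ finishes. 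Thus irreducibility holds already for the Borel--type subalgebra generated by $e(n),h(n)$, with $f$ playing no role.

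By contrast, your ``recovering $a^{*}$'' step is a genuine gap. You do not specify the filtration, the induction hypothesis, or the inductive step, and the obvious candidates run into trouble. The cubic piece $-{:}a^{*}(z)^{2}a(z){:}$ is not of the form (operator already known to preserve $N$)$\cdot a(z)$, so there is no evident way for $e(0)^{-1}$ to ``strip'' an $a(z)$ from a normally ordered product; and for $\chi=0$ the mode $f(0)=-{:}a^{*2}a{:}(0)$ contains no linear--in--$a^{*}$ contribution at all, so the slogan ``the linear part supplies a bare $a^{*}$'' already fails there. Your heuristic that $a(z)$ is invertible enough to reconstruct $a^{*}$ is closer in spirit to the paper's \emph{separate} construction in Section~8, which passes to the localized vertex algebra $\Pi(0)=M[a(-1)^{-1}]$ where a genuine inverse field $a^{-1}(z)$ exists; but that is a different realization, not the one under discussion.

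Finally, your closing remark that this family contains the classical non--degenerate Whittaker modules is incorrect. Even with $\chi_1=2\lambda\mu$ (so that $h(n)v_1=0$ for $n\ge1$), Lemma~6.1 gives $f(1)v_1=\mu(\chi_0+2)v_1+\mu^{2}a(-1)v_1$, which is not a scalar multiple of $v_1$ when $\mu\ne0$; hence $v_1$ is not a Whittaker vector of type $(\lambda,\mu)$. The paper makes exactly this point in the Remark after Lemma~6.1 and handles the non--degenerate case via the $\Pi(0)$ construction of Section~8 instead.
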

In the case when $\mu = 0$  and suitable $\chi(z)$ the Wakimoto modules above provide a complete list of simple Whittaker modules of type $(\lambda, 0)$.
But it is interesting that Wakimoto modules don't present a construction of classical non-degenerate Whittaker modules of type $(\lambda, \mu)$.

In order to present a realization of non-degenerate simple Whittaker modules at the critical level  we slightly generalize the concept of Wakimoto modules. We consider the vertex algebra
$\Pi (0)$  of lattice type  which can be treated as a localization of the Weyl vertex algebra. So in $\Pi(0)$ we have the field $a^{-1} (z)$ such that
$$ a(z) a^{-1} (z) =Id. $$

We construct an embedding  of  $V_{-2}(\sl_2)$ into a vertex algebra $M_T(0) \otimes \Pi(0)$ (see Proposition  \ref{embedding-new}) which enables us to construct classical non-degenerate Whittaker modules. Then for every $ \lambda \in {\C} $ and $\chi(z) \in {\C}((z))$ we consider Whittaker modules $\Pi_{\lambda}$ for $\Pi(0)$
 and  $1$--dimensional representation $M_T( \chi(z) )$  of  $M_T(0)$ such that $T(n)$ acts as $\chi_n \in {\C}$.

We prove in Theorem \ref{wak-nondeg-tm}:

\begin{theorem} Let $\lambda    \in {\C}^*$, $\mu \in {\C}$  and $c(z) =  \sum_{n \le 0 }  c (n) z ^{-n-2} \in {\C}((z))$. Set
$ \chi (z) = \frac{\lambda \mu }{ z^3} +  c(z) $.
Then we have:
$$V_{\widehat{\sl_2} } (\lambda, \mu, -2, \chi(z) ) \cong M_T( \chi(z) ) \otimes \Pi_{\lambda  }. $$
\end{theorem}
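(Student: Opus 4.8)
The plan is to realize the right-hand side as a Whittaker module carrying exactly the data $(\lambda,\mu,\chi(z))$ and then to identify it with the left-hand side through the universal property and the irreducibility already established. By Proposition \ref{embedding-new} the assignment of the currents $e(z),h(z),f(z)$ embeds $V_{-2}(\sl_2)$ into $M_T(0)\otimes\Pi(0)$, so that $M_T(\chi(z))\otimes\Pi_\lambda$ is automatically a restricted $\widehat{\sl_2}$--module of level $-2$ on which the central field $T(z)$ acts as the scalar series $\chi(z)$ coming from the factor $M_T(\chi(z))$. I would first record that, on any Whittaker vector of type $(\lambda,\mu)$, a short Sugawara computation forces $T(n)$ to act by $0$ for $n\ge 2$ and by $\lambda\mu$ for $n=1$; hence imposing the full character $\chi(z)=\lambda\mu\, z^{-3}+c(z)$ is the same as imposing only the relations $(T(n)-c_n)w_{\lambda,\mu,-2}=0$ for $n\le 0$, so that $V_{\widehat{\sl_2}}(\lambda,\mu,-2,\chi(z))$ is precisely the irreducible module of part (ii) of the first main theorem.

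Next I would exhibit the Whittaker vector. Let $v_\lambda$ be the cyclic generator of $\Pi_\lambda$, let $\mathbf 1$ generate the one--dimensional module $M_T(\chi(z))$, and set $\mathbf v=\mathbf 1\otimes v_\lambda$. Using the explicit currents of Proposition \ref{embedding-new} together with the defining relations of $\Pi_\lambda$ (notably $a(0)v_\lambda=\lambda v_\lambda$ and the vanishing of the strictly positive modes of $a(z)$ and $a^*(z)$ on $v_\lambda$), a direct calculation gives $e(0)\mathbf v=\lambda\mathbf v$ together with the annihilation conditions $e(n)\mathbf v=h(n)\mathbf v=0$ $(n\ge 1)$ and $f(n)\mathbf v=0$ $(n\ge 2)$. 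The only delicate point is the $f(1)$--eigenvalue: in the $\Pi(0)$--realization the current $f(z)$ carries a summand built from $T(z)$ and the invertible field $a^{-1}(z)$, and since $a(0)v_\lambda=\lambda v_\lambda$ forces $a^{-1}(0)v_\lambda=\lambda^{-1}v_\lambda$, the mode $f(1)$ contributes $T(1)\cdot\lambda^{-1}=\lambda\mu\cdot\lambda^{-1}=\mu$, whence $f(1)\mathbf v=\mu\mathbf v$. This is exactly the mechanism by which the non-degenerate parameter $\mu$ is transferred into the central character $\chi$ while $\Pi_\lambda$ depends on $\lambda$ alone.

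With $\mathbf v$ a Whittaker vector of the correct type and central character, the universal property of $V_{\widehat{\sl_2}}(\lambda,\mu,-2)$ (see Section \ref{whittaker}) yields a homomorphism sending $w_{\lambda,\mu,-2}\mapsto\mathbf v$; since $(T(n)-c_n)\mathbf v=0$ for $n\le 0$, it descends to an $\widehat{\sl_2}$--homomorphism
$$\bar\Phi:\ V_{\widehat{\sl_2}}(\lambda,\mu,-2,\chi(z))\ \longrightarrow\ M_T(\chi(z))\otimes\Pi_\lambda .$$
Because the source is irreducible, any nonzero such map is injective, so $\bar\Phi$ is injective.

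The remaining step, which I expect to be the main obstacle, is surjectivity. As $M_T(\chi(z))$ is one--dimensional, this amounts to showing that the modes of $e(z),h(z),f(z)$ generate all of $\Pi_\lambda$ from $v_\lambda$. This is precisely the point at which the ordinary Weyl--algebra (Wakimoto) picture is insufficient and the localized vertex algebra $\Pi(0)$ is indispensable: the relation $a(z)a^{-1}(z)=\mathrm{Id}$ makes the action of $a^{-1}(z)$ recoverable from $f(z)$ and the central scalars $\chi(z)$, and together with the negative modes of $a(z)=e(z)$ and of $a^*(z)$ these reach every vector of the localized Fock space $\Pi_\lambda$. I expect the verification of the current formulas on $\mathbf v$ in the second step to be routine, and this generation statement to require the genuine structural input on $\Pi_\lambda$. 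Granting it, $\bar\Phi$ is onto, hence an isomorphism, and in particular $M_T(\chi(z))\otimes\Pi_\lambda$ is irreducible.
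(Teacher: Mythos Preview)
Your overall strategy---exhibit a Whittaker vector in the target, invoke the universal property to get a map from the irreducible quotient, and then argue the map is an isomorphism---is exactly the paper's. The Whittaker computation you sketch is the same as the paper's, and your observation that $T(1)=\lambda\mu$ is automatic (so the character $\chi(z)$ differs from $c(z)$ only in the forced $z^{-3}$ term) is correct.

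The substantive difference is in how you and the paper close the argument. You identify surjectivity as the ``main obstacle'' and propose to recover the $a^{-1}(z)$--modes from $f(z)$ and $\chi(z)$, then argue generation of $\Pi_\lambda$ directly; you leave this step at the level of ``granting it.'' The paper avoids this altogether by proving the \emph{target} is irreducible, after which the nonzero map between two irreducibles is automatically an isomorphism. Concretely, Lemma~\ref{ired-7} shows that $\Pi_\lambda\cong M_1(\lambda,0)$ as modules over the Weyl vertex algebra $M$ (the relation $a(z)a^{-1}(z)=\mathrm{Id}$ on $\Pi_\lambda$ forces ${\C}[a^{-1}(-n)\mid n\ge 0]w_\lambda\subset{\C}[a(-n)\mid n\ge 0]w_\lambda$, so nothing new is gained by localizing), and then the irreducibility of $M_1(\lambda,0)$ as a $\widehat{\frak b}_1$--module was already established in Proposition~\ref{ired-1}. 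Since in the embedding of Proposition~\ref{embedding-new} one has $h=\varphi$ exactly (there is no Heisenberg twist here), irreducibility under $\widehat{\frak b}_1$ is the same as under $\widehat{\frak b}$, hence under $\widehat{\sl_2}$.

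So your plan is sound, but the step you flag as hardest is precisely the one the paper replaces by a cleaner reduction: rather than generating $\Pi_\lambda$ from the $\widehat{\sl_2}$--currents, identify it with a module whose $\widehat{\frak b}_1$--irreducibility is already in hand.
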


In our forthcoming papers we plan to generalize  our results on higher level affine Lie algebras.

\section{Preliminaries}
\label{preliminaries}
\subsection{Affine vertex algebra $V_{\kappa} (\sl_2) $ and its modules}
Let us first recall the related affine Lie algebra theory and vertex   algebra theory from \cite{FB, K2, LL, DL}.

Let ${\g}$ be a finite-dimensional simple Lie algebra over ${\Bbb
C}$ and let $(\cdot,\cdot)$ be a nondegenerate symmetric bilinear
form on ${\g}$. Let  ${\g} = {\n}_- + {\h} + {\n}_+$    be a
triangular decomposition for ${\g}$.

 The affine Lie algebra ${\tg}$ associated
with ${\g}$ is defined as $ {\g} \otimes {\C}[t,t^{-1}] \oplus
{\C}c \oplus {\C}d, $ where $c$ is the canonical central element
\cite{K}
 and  the Lie algebra structure
is given by \bea \label{comut-af-1} [ x \otimes t^n, y \otimes t^m] &=& [x,y] \otimes t
^{n+m} + n (x,y) \delta_{n+m,0} c, \\ \label{comut-af-2} [d, x \otimes t^n] &=& n x
\otimes t^n \eea for $x,y \in {\g}$ and $m,n\in \mathbb{Z}$.  We will
write $x(n)$ for $x \otimes t^{n}$.

The Cartan subalgebra ${\tg}_0$ and  subalgebras ${\tg}_+$,
${\tg}_-$ of ${\tg}$ are defined by $${\tg}_0 = {\h} \oplus {\C}c
\oplus {\C}d, \quad
 {{\tg}}_{\pm} =  \g \otimes t^{\pm} {\C}[t^{\pm}]+{\n}_{\pm}\otimes \C.$$
This defines a triangular decomposition
$$ \tg=\tg_+\oplus \tg_0\oplus \tg_-. $$

  Denote $\hg=[\tg,\tg]=\hg_+\oplus \hg_0\oplus \hg_-$, where $\hg_0={\h} \oplus {\C}c, \hg_{\pm}=\tg_{\pm}$.

Let  $P = {\g}\otimes {\C}[t] \oplus {\C}c $.
  For every $\kappa \in {\C}$,  let ${\C} v_{\kappa}$ be
$1$-dimensional $P$--module  such that the subalgebra ${\g}
\otimes  {\C}[t]$ acts trivially,
 and  the central element
$c$ acts as multiplication with ${\kappa}$. Define the
generalized Verma module $V_{\kappa}(\g)$ as
$$V_{\kappa} (\g)  = U(\tg) \otimes _{ U(P) } {\C} v_{\kappa} .$$
Then $V_{\kappa}(\g)$ has a natural structure of a vertex
algebra generated by fields
$$ x (z) = Y( x(-1) {\bf 1}, z) = \sum_{n \in {\Z} } x(n) z ^{-n-1}, \quad (x \in {\g} ),$$
where  ${\bf 1} = 1 \otimes v_{\kappa}$ is the vacuum vector.

A $\hg$--module $N$ is called {\it restricted} if  for every $w \in N$ and $x \in \g$ we have
$$ x(z) w \in {\C} ((z)). $$
 A restricted $\hg$--module of level $\kappa$ has the structure of a module over vertex algebra $V_{\kappa}(\g)$.

  From now on in this paper let $\g=\sl_2$ with standard generators $e,f, h$; and ${\frak b}=\C h+\C e$. Note that $$[e,f]=h, \ [h, e]=2e, \ [h,f]=-2f, \ (e,f)=1\text{ and }(h,h)=2.$$ We see that $\h=\C h$, $\n_+=\C e$ and $\n_-=\C f$. Let $\hb$ be the subalgebra of $\hg$ generated by $e(n), h(n)$ for $n\in\Z$.

 Assume first that $\kappa \ne -2$.
 Let
$$ \omega = \frac{1}{2 (\kappa+2)} \left( e(-1) f(-1) + f(-1) e(-1) + 1/2 h(-1) ^2 \right) {\bf 1} $$ be the canonical Sugawara Virasoro vector in the vertex algebra $V_{\kappa}(\sl_2)$. Then the components of the field
$$Y(\omega, z) = L(z) = \sum_{n \in {\Z} } L(n) z ^{-n-2} $$
satisfies the commutation relation for the Virasoro algebra of central charge $c_\kappa = \frac{3\kappa }{\kappa+2}$.
Then every module over $V_{\kappa} (\sl_2)$ becomes  a module for the Virasoro algebra. Recall also that
\bea \label{aff-vir}   [L(n), x (m) ] = - m x(n+m) \quad \mbox{for} \ x \in \{ e,f, h \}.  \eea
In particular, $$ [L(n), x(0) ] = 0 \quad \mbox{for} \ x \in \{ e,f, h \}. $$

\vskip 5mm
Let $\kappa=-2$.
Let $t=\frac 1 2 (e(-1) f(-1) + f(-1) e(-1)+ \frac{1}{2} h(-1) ^2 ) {\bf 1} \in V_{-2}(\sl_2)$ and
$$ T(z) = Y(t,z) = \sum_{n \in {\Z} } T(n) z ^{-n-2}. $$
Then
$$ [T(n), x (m) ] = 0 \quad \forall m, n \in {\Z}, $$
i.e., $T(n)$ are central elements. In particular, $t$ generates the center of the vertex algebra $V_{-2}(\sl_2)$ (cf. \cite{efren}, \cite{FB}).
The center is a commutative vertex algebra $M_T (0)$ which is as vector spaces isomorphic to the polynomial algebra
${\C} [T(-n) \vert \  n \ge 0]$.

\subsection{Lie algebra $\widehat{\frak b} \rtimes {\text{Vir}} $}
\label{def-borel-virasoro}

Let $$\LL= \widehat{\frak b} \rtimes \text{Vir} =\text{span}\{L(i),h(i),e(i),c_1,c|i\in \Z\}$$ be the Lie algebra with the Lie brackets
\bea
&& [L(i),L(j)]=(i-j)L(i+j)+\frac{i^3-i}{12}c_1, \nonumber \\
&& [L(i),h(j)]=-jh(i+j),\,\,\,\,[L(i),e(j)]=-je(i+j), \nonumber \\
&& [h(i),h(j)]=\delta_{i+j,0}2ic,\,\,\,\,[h(i),e(j)]=2e(i+j),  \nonumber \\
&&[e(i),e(j)]=0, \nonumber \\
&& [c_1,\LL]=[c,\LL]=0. \nonumber \eea

 Now we can establish
the following  connection between $V_{\kappa}(\g)$--modules  and  $\LL$--modules.

\begin{proposition} \label{ired-crit-1}  Let $\kappa\ne -2$, and let  $N$ be any  $V_{\kappa}(\g)$--module. Then  the following statements hold: \itemize
\item[(i)] The module $N$ is also an $\LL$--module such that $c = \kappa \mbox{Id} $ and $c_1 =  \frac{3 \kappa}{\kappa +2} \mbox{Id}$.
\item[(ii)]  If $N$ is a simple $\LL$--module, then $N$ is a simple $V_{\kappa} (\g)$--module.\enditemize
\end{proposition}
\begin{proof}
By using  the Sugawara construction above and relation (\ref{aff-vir}) we get assertion (i). If $N$ is a simple $\LL$--module, then $N$ is a simple module for the vertex subalgebra of $ V_{\kappa} (\g)$ generated by $\frak b$ and the Virasoro vector $\omega$. Therefore, $N$ is also a simple $V_{\kappa} (\g)$--module.
\end{proof}

\subsection{Lie algebra $\widehat{\frak b} \rtimes {\HH} $}
 \label{def-borel-virasoro-critical}
In our paper we shall study graded representations of the graded center of $V_{-2}(\sl_2)$. This leads to the study of the following infinite-dimensional Lie algebra.

Let $\HH$ be the Lie algebra with the basis $\{d, T(n)|n\in \Z\}$ and the Lie bracket $$[T(i),T(j)]=0, \,\,[d, T(i)]=iT(i),\forall i,j\in \Z.$$
Let ${\HH}_{-}$ denote its subalgebra $\{d, T(n)|n\le 0 \}$.
The Lie algebra $\hat{{\frak b}} \rtimes\HH$ is defined by
$$[d,x(n)]=nx(n), [c, d]=[T(n),c]=[T(n),x(n)]=0, \forall n\in \Z, x\in\{e, h\}.$$
Similarly we have Lie algebra  $\hat{{\frak b}} \rtimes{\HH} _{-}$

We have the following useful criterion.

\begin{proposition} \label{ired-crit-2}  Let  $N$ be any  restricted  $\widetilde{sl_2}$--module at the critical level. Then  the following statements hold:\itemize
\item[(i)] The module $N$ is also an $\widehat{\frak b} \rtimes {\HH} $--module.

\item[(ii)]  If $N$ is a simple $\widehat{\frak b} \rtimes {\HH} $--module, then $N$ is a simple $\widetilde{sl_2}$--module.\enditemize
\end{proposition}
\begin{proof}
The proof of (i)  easily follows from the fact that $N$ is also a module for the center of $V_{-2}(\g)$ and the commutation relation (\ref{comut-af-2}).  The proof of the assertion (ii) is similar as in the proof of Proposition \ref{ired-crit-1}.
\end{proof}
We shall  also  present a method for constructing  irreducible $\widehat{\frak b} \rtimes {\HH} $--modules which we shall use in the paper. The proof of the following result  easily follows from Theorem 7 of \cite{LZ}.

 \begin{proposition} \label{constr-bt-modules}
 Assume that $Y_1, Y_2$ are $\widehat{\frak b} \rtimes {\HH} $--modules such that $Y_2$ is a simple module for $\widehat{\frak b}$  and $\widehat{\frak b}$ acts trivially on $Y_1$. Then the following holds:
 \itemize\item[(1)] Any submodule of $Y_1\otimes Y_2$ is of the form $Y'_1 \otimes Y_2$ for certain submodule $Y'_1$ of $Y_1$.
 \item[(2)] If $Y_1$ is  a simple ${\HH}$--module , then $Y_1 \otimes Y_2$ is a simple $ \widehat{\frak b} \rtimes {\HH} $--module.
\enditemize
 \end{proposition}

\subsection{Weyl vertex algebra}
\label{Weyl}

Recall that the  {\it Weyl algebra} is an associative algebra with generators
$$ a(n), a^{*} (n) \quad (n \in {\Z})$$ and relations
\bea  \label{comut-Weyl} && [a(n), a^{*} (m)] = \delta_{n+m,0}, \quad [a(n), a(m)] = [a ^{*} (m), a ^* (n) ] = 0 \quad (n,m \in {\Z}). \eea
Let $M$ denotes the simple {\it Weyl module} generated by the cyclic vector ${\bf 1}$ such that
$$ a(n) {\bf 1} = a  ^* (n+1) {\bf 1} = 0 \quad (n \ge 0). $$
As a vector space $$ M \cong {\C}[a(-n), a^*(-m) \ \vert \ n < 0, \ m \le 0 ]. $$

\vskip 5mm

Let us describe the basis of $M$. Recall that a partition is a sequence of non-negative integers
$$ (\lambda_1, \lambda_2, \cdots )$$
such that
$$ \lambda_1 \ge \lambda_2 \ge \cdots \quad \mbox{and} \quad \lambda_n = 0 \quad \mbox{for} \ n \ \mbox{sufficiently large}. $$
Let $\mathcal{P}$ be the set of all partitions. Define the {\it length}  $\ell(\lambda)$ and {\it size} $\vert \lambda \vert$ of partitions by
$$\ell(\lambda) = \mbox{max}\{ n \ \vert \ \lambda_n \ne 0 \}, \quad \vert \lambda \vert = \lambda_1 + \lambda_2 + \cdots   .$$
If $\ell(\lambda) = \ell$ we write $\lambda = (\lambda_1, \dots, \lambda_{\ell})$.
Let $\mathcal{P}_{\ell}$ be set of all partitions of length $\ell$. Let $\phi$ be the partition with all the entries being zero. Then we write $\ell(\phi) = 0$.

We shall also need a total order on the set $\mathcal{P}_{\ell}$ such that
$$ \lambda > \mu \quad \mbox{if} \ \lambda_1 = \mu_1, \cdots, \lambda_{i-1} = \mu_{i-1} \quad \mbox{and} \ \lambda_i > \mu_i \quad \mbox{for some} \ i. $$

For  partitions $\lambda = (\lambda_1, \dots, \lambda_r)$,  $\mu = (\mu_1, \dots, \mu_s)$ we set
$$ u_{\lambda, \mu} :=a (-\lambda_1) \cdots a(-\lambda_r) a^* (-\mu_1 +1) \cdots a^* (-\mu_s+1), $$
$$ u_{\lambda, \phi}:= a (-\lambda_1) \cdots a(-\lambda_r), \quad u_{\phi, \mu}:= a^* (-\mu_1 +1) \cdots a^* (-\mu_s+ 1), $$
$$ u _{\phi, \phi} = 1. $$
Then the set
$$\{ u_{\lambda, \mu} {\bf 1} \ \vert (\lambda, \mu) \in \mathcal{P} \times \mathcal{P} \}$$
is a basis for $M$.

\vskip 5mm
 There is a unique vertex algebra $(M, Y, {\bf 1})$  where
the  vertex operator map is $$ Y: M \rightarrow \mbox{End}[[z, z ^{-1}]] $$
such that
$$ Y (a(-1) {\bf 1}, z) = a(z), \quad Y(a^* (0) {\bf 1}, z) = a ^* (z),$$
$$ a(z)   = \sum_{n \in {\Z} } a(n) z^{-n-1}, \ \ a^{*}(z) =  \sum_{n \in {\Z} } a^{*}(n)
z^{-n}. $$

In particular we have:
$$ Y (a(-1) a^*  (0) {\bf 1}, z) =  a(z) ^+ a^* (z)  +  a ^* (z)  a(z) ^- ; $$
$$ Y( a(-1) a ^* (0) ^2 {\bf 1}, z) = a(z) ^+ (a^* (z) ) ^2 + ( a ^* (z) ) ^2 a(z) ^- $$
where
$$a (z) ^+ = \sum_{n \le -1} a(n) z ^{-n-1}, \quad a (z) ^- = \sum_{n \ge  0} a(n) z ^{-n-1}. $$

\section{Whittaker modules}
\label{whittaker}

In this section we will recall Whittaker modules over a Lie algebra with a triangular decomposition and establish  related results.

\subsection{Some general results and definitions} Let $\GG$ be a complex Lie algebra with triangular decomposition
$\GG = \GG_+ \oplus \GG_0 \oplus \GG_-$ in the sense of \cite{MP}.
Let $\eta: \GG_+ \rightarrow \C$ be a Lie algebra homomorphism which will be called a {\it  Whittaker function}, and
$V$ be a $\GG$--module. Note that $\eta([\GG_+ ,\GG_+ ])=0$. A nonzero vector $v \in V$ is called a {\it
Whittaker vector of type $\eta$} if $xv=\eta(x)v$ for all $x \in
\GG_+$. The module $V$ is said to be a {\it type $\eta$ Whittaker module for
$\GG$} if it is generated by a type $\eta$ Whittaker vector. We say that $\GG_+$ acts on $V$ {\it locally nilpotently} if for any $v\in V$ there is $s\in\N$ depending on $v$ such that $x_1x_2...x_sv=0$ for any $x_1,x_2,...,x_s\in\GG_+$.
Let $\GG_+^{(\eta)}=\{x-\eta(x)\,|\,x\in\GG_+\}$ which is a Lie subalgebra of the universal enveloping algebra $U(\GG_+)$ of $\GG_+$.

The following result is somewhat known for  some Lie algebras, see \cite{B, GLZ, LZ}.

\begin{lemma}\label{lemma2.1} Let $V$ be a type $\eta$ Whittaker module for $\GG$.
Suppose that $\GG_+$ acts locally nilpotently on $\GG/\GG_+$. Then
\begin{itemize}
 \item[(i)]$\GG_+^{(\eta)}$  acts locally nilpotently on $V$. In particular, $x-\eta(x)$ acts locally nilpotently on $V$ for any $x\in\GG_+$;
\item[(ii)] any nonzero submodule of $V$ contains a Whittaker vector of type
$\eta$;
\item[(iii)] if the vector space of Whittaker vectors of $V$ is $1$-dimensional, then $V$ is simple.
\end{itemize}
\end{lemma}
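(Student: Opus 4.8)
The plan is to prove (i) first, since (ii) and (iii) then follow almost formally. Throughout I write $\GG_-\oplus\GG_0$ for the nonpositive part (a subalgebra, since $[\GG_0,\GG_-]\subseteq\GG_-$ and $[\GG_-,\GG_-]\subseteq\GG_-$). Since $\eta$ is a character of $U(\GG_+)$ and $w$ is a Whittaker vector, $U(\GG_+)w=\C w$, so by PBW $V=U(\GG)w=U(\GG_-\oplus\GG_0)\,w$. Moreover any type $\eta$ Whittaker module is a quotient of the universal one $M(\eta)=U(\GG)/U(\GG)\GG_+^{(\eta)}$, and local nilpotency of an action passes to quotients; so for (i) I may assume $V=M(\eta)$, i.e.\ that $u\mapsto uw$, $U(\GG_-\oplus\GG_0)\to V$, is a linear isomorphism. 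The computational engine is the identity
$$(x-\eta(x))\cdot(uw)=\big((\mathrm{ad}\,x)(u)\big)\,w,\qquad x\in\GG_+,\ u\in U(\GG),$$
which follows from $xu=ux+(\mathrm{ad}\,x)(u)$ together with $xw=\eta(x)w$. Iterating it gives $(x_1-\eta(x_1))\cdots(x_s-\eta(x_s))\,uw=\big((\mathrm{ad}\,x_1)\cdots(\mathrm{ad}\,x_s)(u)\big)w$, so (i) reduces to showing that for each fixed $u$ the iterated adjoint image lands in $\mathrm{Ann}(w)$ once $s$ is large, uniformly in the $x_i$.

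To control this I would use the hypothesis to build the \emph{nilpotency-layer} filtration of $\GG/\GG_+$: set $K_k=\{\bar y:(\mathrm{ad}\,z_1)\cdots(\mathrm{ad}\,z_k)\bar y=0\text{ for all }z_i\in\GG_+\}$. These are subspaces with $\mathrm{ad}(\GG_+)K_k\subseteq K_{k-1}$, and local nilpotency on $\GG/\GG_+$ is exactly $\bigcup_k K_k=\GG/\GG_+$. Assigning to each basis vector of $\GG/\GG_+$ its layer and to a PBW monomial the sum of the layers of its factors, one obtains on $U(\GG_-\oplus\GG_0)\cong V$ a filtration $\mathcal F_{e}$ by total layer-weight $\le e$. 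The point is that the shifted operator $T_x:=(x-\eta(x))\cdot$, which under $u\mapsto uw$ becomes $\rho\circ\mathrm{ad}(x)$ (with $\rho$ the PBW projection applying $\eta$ to the $\GG_+$-part), \emph{strictly lowers} this weight: in $(\mathrm{ad}\,x)(y_1\cdots y_m)=\sum_i y_1\cdots[x,y_i]\cdots y_m$ each factor $\bar y_i\in K_{k_i}$ is replaced by $\overline{[x,y_i]}=\mathrm{ad}(x)\bar y_i\in K_{k_i-1}$. Hence $T_x(\mathcal F_e)\subseteq\mathcal F_{e-1}$, and any vector of weight $e$ is killed by every product of $e+1$ shifted operators — a bound depending only on the vector, not on the $x_i$. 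This yields the uniform local nilpotency required for (i).

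The step I expect to be the main obstacle is precisely the inclusion $T_x(\mathcal F_e)\subseteq\mathcal F_{e-1}$, because the terms $y_1\cdots[x,y_i]\cdots y_m$ are not in PBW-normal form and the corrections coming from the $\GG_+$-component of $[x,y_i]$ (removed by $\rho$) and from reordering inside $U(\GG_-\oplus\GG_0)$ must be shown not to raise the weight. This amounts to verifying that the layer filtration is compatible with the bracket, i.e.\ that the image in $\GG/\GG_+$ of $[K_a,K_b]$ lies in $K_{a+b}$, which I would prove by induction on $a+b$ using the Jacobi identity. The subtlety is entirely due to $V$ and $\GG_+$ being infinite dimensional, so that the naive filtration by PBW degree — on which $T_x$ is only locally, not strictly, nilpotent — does not telescope.

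Granting (i), part (ii) is immediate: given a nonzero submodule $W$ and $0\ne v\in W$, choose by (i) the minimal $s\ge1$ such that every product of $s$ elements of $\GG_+^{(\eta)}$ annihilates $v$. Then some product $p$ of length $s-1$ has $pv\ne0$ (the empty product when $s=1$), while $pv\in U(\GG)W=W$, and for every $x\in\GG_+$ the element $(x-\eta(x))pv$ is a product of $s$ elements of $\GG_+^{(\eta)}$ applied to $v$, hence $0$ by minimality; thus $xpv=\eta(x)pv$, so $pv$ is a Whittaker vector of type $\eta$ lying in $W$. Finally (iii): if $W$ is a nonzero submodule then by (ii) it contains a Whittaker vector, which by the one-dimensionality hypothesis is a nonzero scalar multiple of the generator $w$; hence $w\in W$ and $W=U(\GG)w=V$, so $V$ is irreducible.
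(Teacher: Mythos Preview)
Your proof is correct, and for parts (ii) and (iii) it is essentially identical to the paper's. For part (i), however, you take a genuinely different route.

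\textbf{The paper's argument for (i).} The paper inducts on the length $r$ of $w=y_1y_2\cdots y_rv$. Choosing $s$ large enough that both $\prod_{i=1}^s(\mathrm{ad}\,x_i)(y_1)\in[\GG_+,\GG_+]$ (from the hypothesis) and $\prod_{i=1}^s(x_i-\eta(x_i))w_1=0$ (by induction on $w_1=y_2\cdots y_r v$), it shows directly that a product of $3s$ shifted factors kills $w$. The mechanism is to commute the innermost $2s$ shifted factors past $y_1$: surviving terms have more than $s$ adjoint factors hitting $y_1$, forcing the result into $[\GG_+,\GG_+]$ where $\eta$ vanishes, so that this piece is itself a shifted factor; the remaining $s$ outer factors then finish the job. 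No auxiliary filtration is introduced.

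\textbf{Your argument for (i).} You use the identity $(x-\eta(x))\,uw=(\mathrm{ad}\,x)(u)\,w$ to convert the problem into showing that $(\mathrm{ad}\,\GG_+)^N u$ eventually annihilates $w$, and you control this via the nilpotency-layer filtration $\tilde K_k=\{y\in\GG:(\mathrm{ad}\,\GG_+)^k y\subseteq\GG_+\}$. The key point you flag --- that $[\tilde K_a,\tilde K_b]\subseteq\tilde K_{a+b}$ --- is correct and is proved exactly as you suggest: by induction on $a+b$, using $(\mathrm{ad}\,x)[y,y']=[[x,y],y']+[y,[x,y']]$ together with $[\GG_+,\tilde K_a]\subseteq\tilde K_{a-1}$. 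Once this is in hand, the induced filtration $\mathcal U_e$ on $U(\GG)$ satisfies $(\mathrm{ad}\,x)\mathcal U_e\subseteq\mathcal U_{e-1}$ for $x\in\GG_+$, and $V_e:=\mathcal U_e w$ does everything you want; the PBW-reordering worries you raise dissolve if you define the filtration on $U(\GG)$ intrinsically rather than via a basis.

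\textbf{Comparison.} Your approach is more structural and yields a sharper nilpotency bound (the total layer-weight of a monomial) than the paper's $3s$-type estimate. The paper's approach is more hands-on, avoiding the auxiliary bracket-compatibility lemma and working term by term. Both are valid.
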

\begin{proof} Let $V=U(\GG)v$ where $v$ is  a type $\eta$ Whittaker vector.

(i)  Let $w=y_1y_2...y_rv$
 be a nonzero vector in $V$ where $y_i \in  \GG$. It is enough to show that $\GG_+^{(\eta)}$ is locally nilpotent on $w$.
We shall do this by induction on $r$. This is trivial for $r=0$. Now suppose $\GG_+^{(\eta)}$ is locally nilpotent on $w_1=y_2...y_rv$.
There exists $s\in\N$ such that $\prod_{i=1}^{s}$(ad$(x_i))y_1\in [\GG_+ ,\GG_+ ]$ and $\prod_{i=1}^{s}(x_i-\eta(x_i))w_1=0$ for any $x_1,x_2,...,x_s\in\GG_+$. For any $x_1,x_2,...,x_{3s}\in\GG_+$, using these formulas  we have
\begin{equation*}\aligned
&\prod_{i=1}^{3s}(x_i-\eta(x_i))w =  \prod_{i=2s+1}^{3s}(x_i-\eta(x_i))  \prod_{i=1}^{2s}(x_i-\eta(x_i))y_1w_1 \\
 =&\prod_{i=2s+1}^{3s}(x_i-\eta(x_i)) \sum_{p=0}^{2s}\big(\prod_{j=p+1}^{2s}{\text{ad}}(x_{i_j}) y_1\prod_{j=1}^{p}(x_{i_j}-\eta(x_{i_j}))\big)w_1
 \\
 =&\prod_{i=2s+1}^{3s}(x_i-\eta(x_i))\sum_{p=0}^{s-1}\Big(\prod_{j=p+1}^{2s}{\text{ad}}(x_{i_j}) y_1\prod_{j=1}^{p}\big(x_{i_j}-\eta(x_{i_j})\big)\Big)w_1
 \\
 =&\hskip -5pt \prod_{i=2s+1}^{3s}\hskip -5pt (x_i-\eta(x_i))\sum_{p=0}^{s-1}\Big(\big(\prod_{j=p+1}^{2s}\hskip -5pt {\text{ad}}(x_{i_j}) y_1-\eta(\prod_{j=p+1}^{2s}{\text{ad}}(x_{i_j}) y_1)\big)\prod_{j=1}^{p}\hskip -5pt \big(x_{i_j}-\eta(x_{i_j})\big)\Big)w_1\\
  =&0.\endaligned
\end{equation*}
We have used the fact that $\eta(\prod_{j=p+1}^{2s}({\text{ad}}(x_{i_j}) y_1)=0$ since $\prod_{j=p+1}^{2s}({\text{ad}}(x_{i_j}) y_1\in[\GG_+ ,\GG_+ ]$.

Thus,  $\GG_+^{(\eta)}$ is locally nilpotent on $w$.

 (ii)  Let $W$ be a nonzero $\GG$-submodule of $V$ and $w=yv$
is a nonzero vector in $W$ where $y \in U(\GG)$. From (i) we know that there is  $n \in \Z_{>0}$ such that
\begin{equation*}
\prod_{i=1}^n \big(x_i-\eta(x_i)\big)yv =0, {\text{ for any }}  x_1,\ldots,x_n \in \GG_+.
\end{equation*}
 Take $n$ minimal. There exist
$x_2, x_3,\cdots, x_n\in\GG_+$ such that $w'= \prod_{i=2}^n
\big(x_i-\eta(x_i)\big)yv\ne0$ but $\big(x-\eta(x)\big)w'=0$ for all
$x \in \GG_+$. That is $w'$ is a Whittaker vector in $W$.

Part (iii) follows from (ii).\end{proof}

We should notice that the converse of (iii) is in general not true. A counterexample will be presented in  Theorem \ref{thm-critical-case2} (1).  See Remark \ref{infinite-whittaker} for details.

\subsection{ Classical Whittaker modules for $\widehat{\sl_2}$  and  $\widetilde{\sl_2}$: definitions}

Note that for $\widehat{\sl_2}$  and $\widetilde{\sl_2}$,  $(\widehat{\sl_2})_+=(\widetilde{\sl_2})_+$ is generated by $e(0)$ and $f(1)$. Thus the Whittaker function $\eta$ is uniquely determined by $(\lambda,\mu)=(\eta(e(0)),\eta(f(1)))$. A type $\eta$ Whittaker module for $\widehat{\sl_2}$ or $\widetilde{\sl_2}$ is also called a {\it Whittaker module of type} $(\lambda,\mu)=(\eta(e(0)),\eta(f(1)))$.

A Whittaker module of  type $(\lambda,\mu)$ is called non--degenerate  (resp. degenerate) if $\lambda \mu \ne 0$ (resp. $\lambda \mu = 0$).

For any $\lambda,\mu,\kappa\in \C$, let
$J(\lambda,\mu,\kappa )$ be the left ideal of $U(\widehat{\sl_2})$
generated by  \bea \{f(i+1), e(i),f(1)-\mu,e(0)-\lambda, h(i), c-\kappa|i\in
\Z_{>0}\}. \label{gen-ideal} \eea So we have the universal Whittaker module
$V_{\widehat{\sl_2}}(\lambda,\mu,\kappa ):=U(\widehat{\sl_2})/J(\lambda,\mu,\kappa )$ of level $\kappa$
and denote the image of $1$ by $w_{\lambda,\mu,\kappa }$.

Similarly one
may define $V_{\widetilde{\sl_2}}(\lambda,\mu,\kappa ):=U(\widetilde{\sl_2})/\widetilde{J}(\lambda,\mu,\kappa )$,
where $\widetilde{J}(\lambda,\mu,\kappa )$ is the left ideal in $U(\widetilde{\sl_2})$ generated by the set (\ref{gen-ideal}).

It is important to notice that the left ideal $\widetilde{J}(\lambda,\mu,\kappa )$ is ad$(d)$-invariant if and only if $\mu = 0$.

In this paper, we will determine all simple Whittaker modules for
$\widehat{\sl_2}$ and $\widetilde{\sl_2}$, i.e., simple quotients of $V_{\widehat{\sl_2}}(\lambda,\mu,\kappa )$ and $V_{\widetilde{\sl_2}}(\lambda,\mu,\kappa )$.

\begin{remark}
In order to make a difference with imaginary Whittaker modules defined in \cite{Ch1, Ch2},   modules defined in this section will be called {\bf the  classical Whittaker modules.}
\end{remark}


\section{Classical Whittaker modules for  $\widehat{\sl_2}$: Main results}
\label{classical-widehat-sl2}
In this section, according to critical level and non-critical level  we will separately state our precise results on simple quotients of universal Whittaker modules $V_{\widehat{\sl_2}}(\lambda,\mu,\kappa )$ over  $\widehat{\sl_2}$, and we will prove these results in Section \ref{proof-sect-1} after making some preparations in Section \ref{section-Whittaker-b-vir}.

\subsection{Classical Whittaker modules for  $\widehat{\sl_2}$  with noncritical level}
In this subsection we assume that $\kappa\ne-2$.
 Note that $V_{\widehat{\sl_2}}(\lambda,\mu,\kappa )$ is a restricted module. Therefore it is a module over the universal affine vertex algebra $V_{\kappa}(\sl_2)$.
Recall that  every module over $V_{\kappa} (\sl_2)$ for $\kappa\ne -2$ is a module for the Virasoro algebra  generated by the components of the field $L(z) = Y(\omega,z) = \sum_{n \in {\Z} } L(n) z ^{-n-2}$  and that
$$ [L(n), x (m) ] = - m x(n+m) \quad \mbox{for} \ x \in \{ e,f, h \}. $$

 The structure of the module  $V_{\widehat{\sl_2}}(\lambda,\mu,\kappa )$  is described by the following theorem.

\begin{theorem} \label{ired-non-crit} Assume that $\lambda, \mu, \kappa\in\C$ with  $\kappa \ne -2$.
\begin{itemize}
\item[(1)] If $\lambda \cdot \mu \ne 0$, then $V_{\widehat{\sl_2}}(\lambda,\mu,\kappa )$ is a simple $\widehat{\sl_2}$--module.
\item[(2)] If $\lambda \ne 0$, then for any $a\in \C$,  $$M_{\widehat{\sl_2}}(\lambda, 0,\kappa ,a):= V_{\widehat{\sl_2}}(\lambda, 0,\kappa )/U(\widehat{\sl_2})(L(0)-a)w_{\lambda, 0,\kappa }$$ has a unique simple quotient, which we denote by $L_{\widehat{\sl_2}}(\lambda, 0,\kappa ,a)$. Moreover,  any simple quotient of $V_{\widehat{\sl_2}}(\lambda, 0,\kappa )$ is isomorphic to a $ L _{\widehat{\sl_2}}(\lambda, 0,\kappa ,a)$ for some $a\in \C$.
\item[(3)] If $\mu\ne 0$, then for any $a\in \C$,  $$M_{\widehat{\sl_2}}(0 , \mu ,\kappa ,a) :=V_{\widehat{\sl_2}}(0, \mu, \kappa )/U(\widehat{\sl_2})(h(0)/2-L(0)-a)w_{0, \mu,\kappa }$$ has a unique simple quotient, which we denote by $L_{\widehat{\sl_2}}(0,\mu,\kappa ,a)$.  Moreover, any simple quotient of $V_{\widehat{\sl_2}}(0, \mu,\kappa )$ is isomorphic to  $L _{\widehat{\sl_2}}(0,\mu,\kappa ,a)$ for some $a\in \C$.
\item[(4)] The simple quotient of $V_{\widehat{\sl_2}}(0, 0, \kappa )$ is obtained by restricting from the simple highest weight $\widetilde{\sl_2}$-module of level $\kappa$. \end{itemize} \end{theorem}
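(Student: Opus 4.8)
The plan is to pass, via Lemma~\ref{lemma3.1}, to the realization of $V:=V_{\widehat{\sl_2}}(\lambda,\mu,\kappa )$ as a cyclic module over $\widehat{\frak b}\rtimes{\rm Vir}$ with the explicit PBW-type basis $\mathcal{B}$ and the total order on $\mathbb{M}^3$ introduced there, and to reduce all four parts to a single computation: an exact description of the space $\mathrm{Wh}(V)$ of Whittaker vectors of type $\eta=(\lambda,\mu)$. Once $\mathrm{Wh}(V)$ is pinned down, everything follows from Lemma~\ref{lemma2.1}: part~(1) is exactly the case where $\mathrm{Wh}(V)$ is one-dimensional, so (iii) gives irreducibility; in parts~(2) and (3) the space $\mathrm{Wh}(V)$ is infinite-dimensional and must be cut down by one scalar; and part~(4) is a highest-weight situation.

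First I would record which operators preserve $\mathrm{Wh}(V)$. Using $[L(m),x(n)]=-n\,x(n+m)$ and the relations listed before the theorem, a direct check shows that $L(1)$ always maps $\mathrm{Wh}(V)$ into itself, with $L(1)w_{\lambda,\mu,\kappa }$ a multiple of $w_{\lambda,\mu,\kappa }$ that is nonzero exactly when $\lambda\mu\ne 0$; that $L(0)$ preserves $\mathrm{Wh}(V)$ precisely when $\mu=0$ (since $[L(0),f(1)]=-f(1)$ contributes a term $\mu\,v$ which must vanish); and that $D:=\tfrac12 h(0)-L(0)$ preserves $\mathrm{Wh}(V)$ precisely when $\lambda=0$ (one checks $[D,f(1)]=0$ while $[D,e(0)]=e(0)$, so $e(0)v=0$ forces $e(0)(Dv)=0$). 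This already yields the inclusions $\C[L(0)]\,w_{\lambda,0,\kappa }\subseteq\mathrm{Wh}(V_{\widehat{\sl_2}}(\lambda,0,\kappa ))$ and $\C[D]\,w_{0,\mu,\kappa }\subseteq\mathrm{Wh}(V_{\widehat{\sl_2}}(0,\mu,\kappa ))$.

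The reverse inclusion, that these are \emph{all} the Whittaker vectors, is the main work and the step I expect to be the obstacle. I would argue by leading terms, exactly as in the proof of Lemma~\ref{lemma3.1}. Given $0\ne v\in\mathrm{Wh}(V)$, expand $v$ in $\mathcal{B}$ and read the Whittaker conditions $e(n)v=0$, $h(n)v=0$ $(n\ge 1)$, $L(n)v=0$ $(n\ge 2)$, $e(0)v=\lambda v$ on the top monomial $\mathcal{U}_{\mathbf{i},\mathbf{j},\mathbf{k}}w_{\lambda,\mu,\kappa }$ with respect to the order on $\mathbb{M}^3$. Since $[h(n),e(-m)]=2e(n-m)$, $[h(n),L(-m)]=n\,h(n-m)$ and $[L(n),L(-m)]=(n+m)L(n-m)+\cdots$, applying $h(n)$ and $L(n)$ strips one creation symbol from the leading monomial with a nonzero leading coefficient, and the vanishing of the result forces the top term to contain no $e(-m)$, no $h(-m)$ and no $L(-m)$ with $m\ge 1$; what survives is a polynomial in $h(0)$ and $L(0)$ applied to $w_{\lambda,\mu,\kappa }$. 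The hypothesis $\lambda\ne 0$ makes $e(0)$ act invertibly modulo lower-order terms, which together with $f(1)v=\mu v$ removes the residual $h(0)$-- and $L(0)$--freedom in the non-degenerate case and pins it to $\C[L(0)]$ (resp. $\C[D]$) in the degenerate ones. Controlling this bookkeeping cleanly, in particular the interaction of the three blocks of creation operators, is the delicate point.

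With $\mathrm{Wh}(V)$ in hand the theorem follows. For (1), $\mathrm{Wh}(V)=\C w_{\lambda,\mu,\kappa }$ and Lemma~\ref{lemma2.1}(iii) gives irreducibility. For (2), $L(0)$ acts on $\mathrm{Wh}=\C[L(0)]w_{\lambda,0,\kappa }$ as multiplication by the variable, so $(L(0)-a)w_{\lambda,0,\kappa }$ generates a proper submodule, and I would show that $M_{\widehat{\sl_2}}(\lambda,0,\kappa ,a)$ has a unique maximal submodule (equivalently, the sum of its proper submodules avoids the image of $w_{\lambda,0,\kappa }$), giving the unique simple quotient $L_{\widehat{\sl_2}}(\lambda,0,\kappa ,a)$. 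Conversely, if $S$ is a simple quotient of $V_{\widehat{\sl_2}}(\lambda,0,\kappa )$ with nonzero kernel $K$, then by Lemma~\ref{lemma2.1}(ii) $K$ meets $\mathrm{Wh}(V)=\C[L(0)]w_{\lambda,0,\kappa }$, so $q(L(0))w_{\lambda,0,\kappa }\in K$ for some nonzero $q$; hence $\C[L(0)]$ acts finitely on the image of $w_{\lambda,0,\kappa }$, has an eigenvector there of some eigenvalue $a$, and this eigenvector is a Whittaker vector generating $S$, whence $S\cong L_{\widehat{\sl_2}}(\lambda,0,\kappa ,a)$. Part~(3) is identical with $L(0)$ replaced by $D$ and the weight $h(0)/2-L(0)-a$. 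Finally in (4) both $\lambda=\mu=0$, so $w_{0,0,\kappa }$ is annihilated by all of $\GG_+$ and the module is of highest-weight type; its simple quotient is obtained by restriction from the simple highest weight $\widetilde{\sl_2}$--module of level $\kappa$ once the $h(0)$-- and $d$--eigenvalues are fixed. The remark after Lemma~\ref{lemma2.1}, that simplicity need not force a one-dimensional Whittaker space, is exactly why in (2) and (3) the statement is phrased through the auxiliary modules $M_{\widehat{\sl_2}}(\lambda,0,\kappa ,a)$ rather than asserting that these are already simple.
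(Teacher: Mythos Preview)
Your approach to Part~(1) is close in spirit to the paper's, but the paper does \emph{not} run the leading--term argument in the ordering of Lemma~\ref{lemma3.1}. It introduces a new $\Z$--gradation on $\LL=\widehat{\frak b}\rtimes{\rm Vir}$ with $\LL_i=\mathrm{span}\{e(-i-1),h(-i),L(-i)\}$ and a ``principal'' order $\prec$ tailored so that exactly one of $h(k+1)$, $L(k+1)-\delta_{k,0}\mu$, $e(k)-\delta_{k,0}\lambda$ strips the minimal index of the leading monomial (Lemmas~\ref{lemma-1},~\ref{lemma-2}, Corollary~\ref{cor-1}, Proposition~\ref{prop3.6}). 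Your sketch with the Lemma~\ref{lemma3.1} order may well go through, but the interactions you flag as ``delicate'' are precisely why the paper changes the ordering.

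For Parts~(2)--(3) your route is genuinely different, and there is a gap. The paper does \emph{not} compute $\mathrm{Wh}(V_{\widehat{\sl_2}}(\lambda,0,\kappa))$ directly. It instead imports structural results from \cite{Ch2} (collected as Lemma~\ref{lemma3.7}): (i)~$V_{\widetilde{\sl_2}}(\lambda,0,\kappa,\dot d,\dot z)$ has a unique simple quotient; (iii)~it is simple for generic $\dot z$; (iv)~any simple Whittaker module with $p(d)w=0$ arises this way. Using the $d$--grading (available since $\mu=0$), the paper argues by contradiction: a homogeneous Whittaker vector of negative degree in a proper submodule would force $M_{\widehat{\sl_2}}(\lambda,0,\kappa,a)$ to be non--simple for all but finitely many $a$, contradicting (iii). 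This pushes the Whittaker vector into $\C[h(0),L(0)]w$, then $\C[L(0)]w$, and (iv) finishes. Part~(3) is reduced to (2) by the automorphism $\sigma$ with $\sigma(e(i))=f(i+1)$, $\sigma(d)=d+h(0)/2$.

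The gap in your plan is the ``unique simple quotient'' assertion for $M_{\widehat{\sl_2}}(\lambda,0,\kappa,a)$. Knowing $\mathrm{Wh}(V)=\C[L(0)]w$ (even if your leading--term argument produces it) controls Whittaker vectors in $V$, not in the quotient $M$: new Whittaker vectors can appear after factoring, and your claim that ``the sum of proper submodules avoids $\bar w$'' does not follow. For a cyclic module this is equivalent to the existence of a unique maximal submodule, which is exactly what the paper takes from \cite{Ch2} via Lemma~\ref{lemma3.7}(i). Without an independent proof of that fact (or a proof that $\mathrm{Wh}(M)=\C\bar w$, which by Lemma~\ref{lemma2.1}(iii) would make $M$ simple and is false for non--generic $a$), your argument for (2) does not close. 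Your ``conversely'' direction for (2) is fine once (2) itself is established; Part~(4) in the paper is likewise handled by citing \cite{MZ} after passing to $\widetilde{\sl_2}$ via $d=-L(0)$.
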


 The proof of the first Claim is one of the most difficult part of the paper. The proof will be given in Section \ref{proof-sect-1} where we shall prove that $V_{\widehat{\sl_2}}(\lambda,\mu,\kappa )$ is a simple module for the Lie algebra $\widehat{\frak b} \rtimes {\text{Vir}} $. Claims (2)-(4) will be also proved in Section \ref{proof-sect-1} where we need to recall some results on Whittaker modules from \cite{Ch2}.

\subsection{Classical Whittaker modules for   $\widehat{\sl_2}$  at the critical level}

 We will give all simple quotients of  $V_{\widehat{\sl_2}}(\lambda,\mu,-2)$ in this subsection.

Since $V_{\widehat{\sl_2}}(\lambda, \mu, -2)$ is a restricted module for the affine Lie algebra $\widehat{\sl_2}$, it is a module for the universal affine vertex algebra $V_{-2}( \sl_2)$. Recall from Sect.2.1 the element
$ t\in V_{-2}(\sl_2)$ and $T(z) = Y(t,z) = \sum_{n \in {\Z} } T(n) z ^{-n-1}$.
Then $T(n)$ are central elements.

\begin{theorem}  \label{ired-crit}  Let  $c(z)=\sum_{n\le 0} c_n z^{-n-2}, c'(z)=\sum_{n\le 0} c'_n z^{-n-2}\in \C((z))$, and $\lambda,\lambda'\in \C^*,\mu,\mu'\in \C.$
\begin{itemize}
\item[(a)] The Whittaker module $$V_{\widehat{\sl_2}}(\lambda,\mu,-2, c(z))= V_{\widehat{\sl_2} } (\lambda, \mu, -2) / \langle (T(n) - c_n ) w_{\lambda, \mu, -2}, \ n \le 0 \rangle $$ is a simple $\widehat{\sl_2}$ module.
\item[(b)] Any simple quotient of $V_{\widehat{\sl_2}}(\lambda,\mu,-2)$ is of the form $V_{\widehat{\sl_2}}(\lambda,\mu,-2, c(z))$ for some $c(z)$.
\item[(c)] $V_{\widehat{\sl_2}}(\lambda,\mu, -2, c(z))\cong V_{\widehat{\sl_2}}(\lambda',\mu',c'(z))$ if and only if $\lambda=\lambda',\mu=\mu',c(z)=c'(z)$.
\end{itemize}

\end{theorem}

In order to explain the structure of Whittaker modules at the critical level, we shall present two different proofs of the irreducibility of modules $V_{\widehat{\sl_2}}(\lambda,\mu, -2, c(z))$. The first proof will be given is Section \ref{proof-critical-direct} by using similar approach as in the case of non-critical level.  A new insight into the structure of  the Whittaker modules $V_{\widehat{\sl_2}}(\lambda,\mu,-2)$ will be given  in Section \ref{wak-nondeg}, where we shall present their  explicit bosonic realizations and a new  proof of irreducibility.

\begin{remark}
From our proof of irreducibility of $V_{\widehat{\sl_2}}(\lambda,\mu, -2, c(z))$ we will see that $V_{\widehat{\sl_2}}(\lambda,\mu, -2, c(z))$ is simple as a module for the parabolic subalgebra $\widehat{\frak b} ={\frak b} \otimes {\C}[t,t^{-1}] + {\C} c$ of $\widehat{\sl_2}$  where ${\frak b} = {\C} e + {\C} h$.
In fact as a $\widehat{\frak b}$--module $V_{\widehat{\sl_2}}(\lambda,\mu, -2, c(z))$  is isomorphic to the Whittaker module
generated by $v$ such that
$$ e(0) v = \lambda v, \ h(n+1) v = e(n) v =  0  \quad \forall n \ge 1. $$
So our results are analogous to that of W.R. Wallach  \cite{W}  and of D. Mili\v ci\' c and W. Soergel \cite{MS}.
\end{remark}

\section{Whittaker $\widehat{\frak b} \rtimes {\text{Vir}} $--modules}
\label{section-Whittaker-b-vir}
We notice in Proposition \ref{ired-crit-1} that  for  proving irreducibility of any  $V_{\kappa}(\g)$--module, it suffices  to prove that it is simple  as $\LL= \widehat{\frak b} \rtimes {\text{Vir}} $--module.  We will apply this criterion later in the proof of Theorem \ref{ired-non-crit}. In this section we are going to establish the theory of  Whittaker modules over $\LL$.

First we shall define the universal Whittaker $\LL$--module of level $(\kappa_1,\kappa)$. For any $(\lambda,\mu,\kappa_1,\kappa)\in \C^4$, let $$V(\lambda,\mu,\kappa_1,\kappa)=U(\LL)/\langle e(0)-\lambda, L(1)-\mu, L(i+1), h(i),e(i),c_1-\kappa_1,c-\kappa\,|\,i>0 \rangle, $$
 where $w_{\lambda,\mu,\kappa_1,\kappa}$ is the image of $1$.

In order to  prove Theorem \ref{ired-non-crit} (1), we need to prove irreducibility for  $\LL$--modules $V(\lambda, \mu, \kappa, \kappa_1)$ with $\lambda\mu\ne 0$. The main idea is to prove that any Whittaker vector in $V(\lambda,\mu,\kappa_1,\kappa)$ belongs to $\C w_{\lambda,\mu,\kappa_1,\kappa}$, i.e., for any $v\in V(\lambda,\mu,\kappa_1,\kappa)\backslash \C w_{\lambda,\mu,\kappa_1,\kappa}$ we need to find an $x\in \{e(0)-\lambda, L(1)-\mu, L(i+1), h(i),e(i)|i>0\}$ such that $x v\ne 0$.
We  will consider a $\Z$-gradation on $\LL$ different from that with respect to  ${\rm ad}\, L(0)$. This very strange $\Z$-gradation on $\LL$ is crucial to our proof. Now we define it.

 Let $\LL_0=\text{span}\{e(-1),h(0),L(0),c_1,c\}$ and $\LL_i=\text{span}\{e(-i-1),h(-i),L(-i)\}$ for all $i\ne 0$. Then $[\LL_i,\LL_j]\subset \LL_{i+j}$, and $\LL$ and $U(\LL)$ are $\Z$-graded. Denote $D(x)=i$ if $0\ne x\in U(\LL)_i$.

Denote by $\mathbb{M}$ the set of all (infinite) vectors of the form $\mathbf{i}:=(\dots,i_2,i_1)$ with entries
in $\mathbb{Z}_{\ge 0}$, such  that only finitely many entries nonzero. Let $\mathbf{0}=(\dots,0,0)$, $\varepsilon_n=(\ldots,\delta_{l,n},\ldots,\delta_{1,n})$, and $|\mathbf{i}|=\sum_{l=1}^{+\infty} i_l$ for all $\mathbf{i}\in \mathbb{M}$.

For any $\mathbf{i}\in \mathbb{M}$, denote
$$u_{\mathbf{i}}=\cdots (h(-n)^{i_{3n+3}}L(-n)^{i_{3n+2}}e(-n-1)^{i_{3n+1}})\cdots (h(0)^{i_3}L(0)^{i_2}e(-1)^{i_1})\in U(\LL_{\ge 0}).$$

 Let $D(\mathbf{i})=D(u_{\mathbf{i}})=\sum_{j=1}^3\sum_{k=0}^{+\infty} ki_{3k+j}$.


We are going to define another total order on the set $\mathbb{M}$.

Denote by $<$ the {\em reverse lexicographic total order} on $\mathbb{M}$, defined recursively (with respect to
the degree) as follows: $\mathbf{0}$ is the minimum element; and for different nonzero
$\mathbf{i},\mathbf{j}\in \mathbb{M}$  we have $\mathbf{i}<\mathbf{j}$  if and only if one of the
following conditions is satisfied:
\begin{itemize}
\item $\min\{s:i_s\neq 0\}>\min\{s:j_s\neq 0\}$;
\item $\min\{s:i_s\neq 0\}=\min\{s:j_s\neq 0\}=k$ and $\mathbf{i}-\varepsilon_{k}<\mathbf{j}-\varepsilon_{k}$.
\end{itemize}

Define the {\em principal} total order $\prec$ on $\mathbb{M}$ as follows: for different
$\mathbf{i},\mathbf{j}\in \mathbb{M}$ set $\mathbf{i}\prec \mathbf{j}$ if and only if one of the
following conditions is satisfied:
\begin{itemize}
\item $D(\mathbf{i})<D(\mathbf{j})$;
\item $D(\mathbf{i})=D(\mathbf{j})$ and $|\mathbf{i}|<|\mathbf{j}|$;
\item $D(\mathbf{i})=D(\mathbf{j})$ and  $|\mathbf{i}|=|\mathbf{j}|$,
but $\mathbf{i}< \mathbf{j}$.
\end{itemize}

It is clear that
$$B=\{ u_{\mathbf{i}} w_{\lambda,\mu,\kappa_1,\kappa}|\mathbf{i}\in \mathbb{M}\}$$ is a basis of $V(\lambda,\mu,\kappa_1,\kappa)$. Now every element $v$ of $V(\lambda,\mu,\kappa_1,\kappa)$ can be uniquely written in the form
$$v=\sum_{\mathbf{i}\in \mathbb{M}} v_{\mathbf{i}}u_{\mathbf{i}} w_{\lambda,\mu,\kappa_1,\kappa},$$ where only finitely many $v_{\mathbf{i}}\in \C$ are nonzero. We denote by $\supp(v)$ the set of all $\mathbf{i}$  with $v_{\mathbf{i}}\ne 0$.
For  a nonzero $v\in V(\lambda,\mu,\kappa_1,\kappa)$ let $\mathfrak{l}(v)$ denote the maximal (with respect to $\prec$)
element of $\mathrm{supp}(v)$, called the {\em leading term} of $v$. Let $D(v)=D(\mathfrak{l}(v))$, and $D(0)=-\infty$. For any $k\in \Z_{\ge 0}$, set $\supp_k(v)=\{\mathbf{i}\in \supp(v)|D(\mathbf{i})=k\}$.

\begin{lemma}\label{lemma-1} For any $n\in \Z_{> 0}$, let $x=h(n),$ or $ L(n)-\delta_{n,1}\mu,$ or $ e(n-1)-\delta_{n,1}\lambda$. Then, for any  $v\in V(\lambda,\mu,\kappa_1,\kappa)$ with $k=D(v)$ we have
\begin{itemize}
\item[(i)] $D(xv)\le k-n+1$;
\item[(ii)] $\supp_{k-n+1}(xv)\subset \{\mathbf{i}-\mathbf{j}  \ \vert  \ \mathbf{i}\in \supp_k(v), D(\mathbf{j})=n-1\}$.
\end{itemize}
  \end{lemma}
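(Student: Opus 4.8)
Lemma~\ref{lemma-1} asks us to control, for a single generator $x \in \{h(n),\, L(n)-\delta_{n,1}\mu,\, e(n-1)-\delta_{n,1}\lambda\}$ with $n>0$, how the operator $x$ acts on the leading (principal-degree) part of an arbitrary vector $v \in V(\lambda,\mu,\kappa_1,\kappa)$. Part (i) says $x$ lowers principal degree by at least $n-1$, and part (ii) pins down exactly which monomials can appear in the top surviving graded piece: they are obtained from the top-degree monomials $\mathbf{i}$ of $v$ by subtracting a single ``chunk'' $\mathbf{j}$ of internal degree $D(\mathbf{j})=n-1$. The plan is to reduce everything to a single basis vector $u_{\mathbf{i}} w$ and then compute $x \cdot u_{\mathbf{i}} w$ by commuting $x$ past the product $u_{\mathbf{i}}$ to reach $w$.

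**Reduction and the commutator expansion.** First I would note that $D$ is a grading: by the bracket relations listed in the definition of $\LL$ and its $\Z$-gradation ($\LL_0$ spanned by $e(-1),h(0),L(0),c_1,c$ and $\LL_i$ spanned by $e(-i-1),h(-i),L(-i)$), each of $h(n),L(n),e(n-1)$ lies in $\LL_{-n}$, i.e.\ has $D$-degree $-n$. Hence as an operator on $U(\LL_{\ge0})w$ it is homogeneous of internal degree $-n$. The key subtlety is that $D$ is not the full principal order $\prec$: $\prec$ refines $D$ by $|\mathbf{i}|$ and then by the reverse-lexicographic order. So I would first establish the clean statement at the level of $D$, and then track the finer data. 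Write $v = \sum_{\mathbf{i}} v_{\mathbf{i}} u_{\mathbf{i}} w$ and split off the top piece $\supp_k(v)$. Since $x$ has internal degree $-n$, applying it to a monomial of $D$-degree $k$ produces terms of $D$-degree at most $k-n$ \emph{unless} acting on $w$ itself contributes: the point is that $x$ does not annihilate $w$ but instead produces a constant or a degree-shift via the defining relations $e(0)w=\lambda w$, $L(1)w=\mu w$, $h(i)w=0=e(i)w=L(i+1)w$ for $i>0$. This is precisely why the shifted operators $L(n)-\delta_{n,1}\mu$ and $e(n-1)-\delta_{n,1}\lambda$ appear: the subtraction kills the ``$+1$'' boost that acting directly on $w$ would give, so that the genuine top contribution comes from commuting $x$ into $u_{\mathbf{i}}$ exactly once.

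**Carrying out the commuting.** For $x\,u_{\mathbf{i}}\,w$ I would expand $x\,u_{\mathbf{i}} = u_{\mathbf{i}}\,x + \sum (\text{terms where }x\text{ is bracketed once or more against a factor of }u_{\mathbf{i}})$. The term $u_{\mathbf{i}}\,x\,w$: when $x$ is one of the \emph{shifted} generators it annihilates $w$ (for the unshifted $h(n)$, $n>0$, it also annihilates $w$), so this term is zero or lower order. Each single bracket $[x, \text{factor}]$ replaces a generator of internal degree $d$ by a generator of internal degree $d-n$, lowering $D$ by $n$; but the chunk removed has degree $n-1$ rather than $n$ because the bracketed result still sits one graded piece higher when we account for the factor consumed — concretely $[h(n),e(-m-1)]=2e(n-m-1)$, $[L(n),e(-m-1)]=(n+m+1)e(n-m-1+\,\cdots)$, etc., each producing an element of $\LL$ of $D$-degree $n-(n+ \text{stuff})$, and the net effect on the monomial index is subtraction of a $\mathbf{j}$ with $D(\mathbf{j})=n-1$. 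Terms with two or more brackets drop $D$ by more than $n$, hence fall below $k-n+1$. I would verify the arithmetic $D(\mathbf{j})=n-1$ case by case for the three generator types against the three factor types, using the explicit brackets $[L(i),L(j)],[L(i),h(j)],[L(i),e(j)],[h(i),h(j)],[h(i),e(j)]$; each bracket lands a single basis element of $\LL$, and expressing that element back in the PBW basis $u_{(\cdot)}$ shows it contributes a $\mathbf{j}$ of internal degree exactly $n-1$ (plus possibly lower-$D$ garbage handled by part (i)). Summing over $\mathbf{i}\in\supp_k(v)$ and over the single-bracket positions yields exactly the claimed containment in part (ii).

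**The main obstacle.** The genuinely delicate point is bookkeeping in the PBW basis: a single bracket such as $[L(n),e(-m-1)]$ returns $e(n-m-1)$, which for $n-m-1 \ge 0$ is a generator that acts nontrivially on $w$ (via $e(0)w=\lambda w$ or $e(i)w=0$) rather than extending the monomial, so ``subtracting a chunk $\mathbf{j}$ of degree $n-1$'' must be interpreted carefully — it may simultaneously delete one factor and scale $w$. I expect the hard part to be confirming that all these mixed cases still land in the set $\{\mathbf{i}-\mathbf{j} : \mathbf{i}\in\supp_k(v),\ D(\mathbf{j})=n-1\}$ and that no term of $D$-degree exactly $k-n+1$ is overlooked or lands outside this set; in particular I would check that the $c,c_1$-central terms (which appear in $[L(i),L(j)]$ and $[h(i),h(j)]$) only contribute when they preserve the right degree count, and that the reordering back into the fixed monomial order $u_{\mathbf{i}}$ does not generate spurious top-degree terms. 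Once the single-generator, single-basis-vector computation is pinned down, parts (i) and (ii) follow by linearity over $\supp(v)$.
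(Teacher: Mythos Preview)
Your overall strategy matches the paper's: reduce to a basis vector $u_{\mathbf{i}}w$, note that the shifted $x$ lies in $\LL_{-n}$ and annihilates $w$, and analyse $[x,u_{\mathbf{i}}]w$ by pushing the single negative-degree factor in each term to the right. Where your argument breaks is the claim ``Terms with two or more brackets drop $D$ by more than $n$, hence fall below $k-n+1$.'' This is false, and it is not a cosmetic slip, because your verification of (ii) rests on it (you plan to check only single brackets).

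The commutator $[x,u_{\mathbf{i}}]$ is homogeneous of $D$-degree $k-n$ regardless of how many nested brackets occur: iterating the bracket against further factors of $u_{\mathbf{i}}$ consumes those factors but raises the degree of the running element by exactly the same amount, so the total never changes. What produces degree $k-n+1$ in the module is not the bracket count but the mechanism you yourself flag in your last paragraph: when the iterated bracket reaches $w$ with degree exactly $-1$ (i.e.\ it lies in $\LL_{-1}$, spanned by $e(0),h(1),L(1)$), it acts as the scalar $\lambda$, $0$, or $\mu$, and the surviving submonomial $u_{\mathbf{i}-\mathbf{j}}$ has degree $k-(n-1)$. Here $\mathbf{j}$ records \emph{all} factors consumed along the way, and $D(\mathbf{j})=n-1$ precisely because $-n+D(\mathbf{j})=-1$. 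Multi-bracket terms therefore do contribute at top degree: with $x=L(3)$ and factors $e(-2),h(-1)$ (each in $\LL_1$) one gets $[[L(3),e(-2)],h(-1)]=-4e(0)$, hitting $w$ as $-4\lambda$ and contributing at degree $k-2=k-n+1$ with $|\mathbf{j}|=2$. The paper encodes all of this in one line by writing $[x,u_{\mathbf{i}}]\in\sum_{i=k-n}^{k}U(\LL_{\ge 0})_i\,\LL_{k-n-i}$ and then observing that only the summand with an $\LL_{-1}$ tail survives at degree $k-n+1$ after applying to $w$, yielding exactly $\sum_{D(\mathbf{j})=n-1}c_{\mathbf{j}}\,u_{\mathbf{i}-\mathbf{j}}\,w$.
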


  \begin{proof} We may assume that $v=u_{\mathbf{i}}w_{\lambda,\mu,\kappa_1,\kappa}$ with $k=D(\mathbf{i})$. For any fixed $x=h(n), L(n)-\delta_{n,1}\mu, e(n-1)-\delta_{n,1}\lambda$, by commuting $x$ with some terms of $u_{\mathbf{i}}$ in all possible ways, we may transfer the only negative degree term in $[x,u_{\mathbf{i}}]$ to the right side, i.e., $[x,u_{\mathbf{i}}]\in \sum_{ i\in \{k-n,\ldots,k\}}
   U(\LL_{\ge 0})_{i}\LL_{k-n-i}$. Hence
  \begin{equation}\label{formula-1}xv=[x,u_{\mathbf{i}}]w_{\lambda,\mu,\kappa_1,\kappa}=(u_{k-n}+\sum_{\mathbf{j}\in \{\mathbf{k}|\mathbf{i}-\mathbf{k}\in \mathbb{M}, D(\mathbf{k})=n-1 \}}u_{\mathbf{i}-\mathbf{j}}c_{\mathbf{j}})w_{\lambda,\mu,\kappa_1,\kappa},\end{equation} for some $c_{\mathbf{j}}\in \C$ and $u_{k-n}\in U(\LL_{\ge 0})_{k-n}$.  Hence $D(xu_{\mathbf{i}}w_{\lambda,\mu,\kappa_1,\kappa}) \le  D(u_{\mathbf{i}}w_{\lambda,\mu,\kappa_1,\kappa})-n+1$. The lemma follows.
  \end{proof}

\begin{lemma}\label{lemma-2}Let $\mathbf{i}\in \mathbb{M}$ with $n=\min\{k|i_k\ne 0\}>0$, $\lambda\mu\ne 0$.
\begin{itemize}
\item[(a)] If $n=3k+1$ for some $k\in \Z_{\ge 0}$, then
\begin{itemize}
\item[(i)]  $\mathfrak{l}(h(k+1)u_{\mathbf{i}}w_{\lambda,\mu,\kappa_1,\kappa})=\mathbf{i}-\varepsilon_n$,
\item[(ii)] $\mathbf{i}-\varepsilon_n\not\in \supp(h(k+1)u_{\mathbf{i'}}w_{\lambda,\mu,\kappa_1,\kappa})$ for all $\mathbf{i'}\prec \mathbf{i}$.\end{itemize}
 \item[(b)]If $n=3k+2$ for some $k\in \Z_{\ge 0}$, then
\begin{itemize}
\item[(i)] $\mathfrak{l}((L(k+1)-\delta_{k,0}\mu)u_{\mathbf{i}}w_{\lambda,\mu,\kappa_1,\kappa})=\mathbf{i}-\varepsilon_n$,
\item[(ii)]    $\mathbf{i}-\varepsilon_n\not\in \supp((L(k+1)-\delta_{k,0}\mu)u_{\mathbf{i'}}w_{\lambda,\mu,\kappa_1,\kappa})$ for all $\mathbf{i'}\prec \mathbf{i}$.\end{itemize}
\item[(c)] If $n=3k+3$ for some $k\in \Z_{\ge 0}$, then
\begin{itemize}
\item[(i)]  $\mathfrak{l}((e(k)-\delta_{k,0}\lambda)u_{\mathbf{i}}w_{\lambda,\mu,\kappa_1,\kappa})=\mathbf{i}-\varepsilon_n$,
\item[(ii)]    $\mathbf{i}-\varepsilon_n\not\in \supp((e(k)-\delta_{k,0}\lambda)u_{\mathbf{i'}}w_{\lambda,\mu,\kappa_1,\kappa})$ for all $\mathbf{i'}\prec \mathbf{i}$.
\end{itemize}
\end{itemize}
  \end{lemma}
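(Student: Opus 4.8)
The goal is to prove Lemma~\ref{lemma-2}, which controls the leading term (with respect to the principal order $\prec$) of the result of applying one of the operators $h(k+1)$, $L(k+1)-\delta_{k,0}\mu$, or $e(k)-\delta_{k,0}\lambda$ to the basis element $u_{\mathbf{i}}w_{\lambda,\mu,\kappa_1,\kappa}$, where $n=\min\{s:i_s\neq 0\}$ determines which of the three cases occurs. The key structural fact is that the operator in each case is chosen so that it annihilates $w_{\lambda,\mu,\kappa_1,\kappa}$ (for $n>0$ we have $k\ge 0$, so $h(k+1)$, $L(k+1)-\delta_{k,0}\mu$, and $e(k)-\delta_{k,0}\lambda$ all kill the Whittaker vector), and so that its commutator with the first (innermost, lowest-degree) generator appearing in $u_{\mathbf{i}}$ produces exactly the constant needed to strip off one copy of $\varepsilon_n$. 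Concretely, in case (a) with $n=3k+1$ the innermost factor is $e(-k-1)^{i_n}$, and $[h(k+1),e(-k-1)]=2e(0)$, which acts on the vector as the scalar $2\lambda$; in case (b) with $n=3k+2$ the innermost factor is $L(-k)^{i_n}$ and $[L(k+1)-\delta_{k,0}\mu,L(-k)]$ produces $(2k+1)L(1)$ plus central terms, acting as the scalar $(2k+1)\mu$ (the $\delta_{k,0}\mu$ shift handles the $k=0$ anomaly); in case (c) with $n=3k+3$ the innermost factor is $h(-k)^{i_n}$ and $[e(k)-\delta_{k,0}\lambda,h(-k)]=-2e(0)$ acts as $-2\lambda$. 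Since $\lambda\mu\neq 0$, these scalars are nonzero, so the $\mathbf{i}-\varepsilon_n$ term genuinely survives.

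\textbf{Step 1: the leading-term computation (i).}
First I would invoke Lemma~\ref{lemma-1} to pin down the degree: with $v=u_{\mathbf{i}}w$ and $D(v)=D(\mathbf{i})=k$, applying $x$ (one of the three operators, all of the form covered by Lemma~\ref{lemma-1} with the relevant shift $n=k+1$ there) gives $D(xv)\le D(\mathbf{i})-k=D(\mathbf{i})-(k+1)+1$, and part (ii) of Lemma~\ref{lemma-1} says the top-degree contributions all have the form $\mathbf{i}-\mathbf{j}$ with $D(\mathbf{j})=k$ (in the notation of that lemma, $n-1=k$). Among all such $\mathbf{j}$, the term $\mathbf{i}-\varepsilon_n$ corresponds to $\mathbf{j}=\varepsilon_n$, obtained by commuting $x$ all the way in to hit the innermost generator and contracting it via the bracket computed above; this is the term of largest $\prec$-value because it has maximal $|\cdot|$ among the degree-$(D(\mathbf{i})-k)$ terms (all other ways of distributing the drop in degree either lower $|\mathbf{i}-\mathbf{j}|$ further or produce the same degree with smaller reverse-lexicographic rank). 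I would verify this by a direct commutator expansion, pushing $x$ past the ordered product $u_{\mathbf{i}}$ one factor at a time and tracking which contraction yields the $\prec$-maximal monomial, using that the scalar coefficient of $\mathbf{i}-\varepsilon_n$ is the nonzero number identified above.

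\textbf{Step 2: the non-reappearance claim (ii).}
For part (ii) I would argue that for any $\mathbf{i'}\prec\mathbf{i}$, the monomial $\mathbf{i}-\varepsilon_n$ cannot lie in $\supp(x\,u_{\mathbf{i'}}w)$. The reasoning splits by the principal order: if $D(\mathbf{i'})<D(\mathbf{i})$ then $D(x\,u_{\mathbf{i'}}w)<D(\mathbf{i}-\varepsilon_n)$ by Lemma~\ref{lemma-1}(i), so the term cannot appear for degree reasons; if $D(\mathbf{i'})=D(\mathbf{i})$ but $|\mathbf{i'}|<|\mathbf{i}|$, then by Lemma~\ref{lemma-1}(ii) every top-degree monomial of $x\,u_{\mathbf{i'}}w$ has the form $\mathbf{i'}-\mathbf{j'}$ with $D(\mathbf{j'})=k$ and $|\mathbf{j'}|\ge 1$, hence $|\mathbf{i'}-\mathbf{j'}|<|\mathbf{i}-\varepsilon_n|$, again ruling out the target; the remaining subcase $D(\mathbf{i'})=D(\mathbf{i})$, $|\mathbf{i'}|=|\mathbf{i}|$, $\mathbf{i'}<\mathbf{i}$ in the reverse-lexicographic order is the delicate one and I expect it to be \textbf{the main obstacle}. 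Here I would track the reverse-lexicographic index $\min\{s:\cdot\neq 0\}$: producing $\mathbf{i}-\varepsilon_n$ from $\mathbf{i'}$ requires $\mathbf{i}-\varepsilon_n=\mathbf{i'}-\mathbf{j'}$ with $D(\mathbf{j'})=k$ and $|\mathbf{j'}|=1$, i.e. $\mathbf{j'}=\varepsilon_m$ for the unique $m$ with the correct degree, forcing $\mathbf{i'}=\mathbf{i}-\varepsilon_n+\varepsilon_m$; one then checks that this equation together with $\mathbf{i'}\prec\mathbf{i}$ and the constraint $D(\varepsilon_m)=k=D(\varepsilon_n)$ (so $m$ and $n$ sit in the same degree block) forces $m=n$, i.e. $\mathbf{i'}=\mathbf{i}$, contradicting $\mathbf{i'}\prec\mathbf{i}$. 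Carrying out this last bookkeeping carefully — confirming that no lower monomial can contribute the target through a single contraction once $|\mathbf{i'}|=|\mathbf{i}|$ — is where the real work lies, and it is exactly what makes the three operators ``triangular'' with respect to $\prec$, which is the payoff used in the proof of Theorem~\ref{ired-non-crit}(1).
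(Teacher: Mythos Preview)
Your overall architecture matches the paper's: Lemma~\ref{lemma-1} bounds the $D$-degree, the bracket computations identify the nonzero scalar on the $\varepsilon_n$-contraction, and the case split on $\prec$ handles part~(ii). The first two subcases of (ii) (drop in $D$, drop in $|\cdot|$) are handled exactly as the paper does.

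The gap is in your treatment of the final subcase $D(\mathbf{i'})=D(\mathbf{i})$, $|\mathbf{i'}|=|\mathbf{i}|$, $\mathbf{i'}<\mathbf{i}$. You write that $\mathbf{i'}=\mathbf{i}-\varepsilon_n+\varepsilon_m$ with $D(\varepsilon_m)=k$ ``forces $m=n$''. This is not a combinatorial consequence: there are three indices $m\in\{3k+1,3k+2,3k+3\}$ with $D(\varepsilon_m)=k$, and for instance in case~(a) with $\mathbf{i}=2\varepsilon_{3k+1}$ one has $\mathbf{i'}=\varepsilon_{3k+1}+\varepsilon_{3k+2}\prec\mathbf{i}$ and $\mathbf{i'}-\varepsilon_{3k+2}=\mathbf{i}-\varepsilon_n$. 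What rescues the claim is not the order but the \emph{vanishing of the other coefficients}: in case~(a), $[h(k+1),L(-k)]w=(k+1)h(1)w=0$ and $[h(k+1),h(-k)]=0$, so $c_{\varepsilon_{3k+2}}=c_{\varepsilon_{3k+3}}=0$; analogously in cases~(b),~(c) the two ``wrong'' brackets at degree~$k$ act by zero (or the relevant factor is absent because $n$ exceeds its position). You computed these brackets in Step~1 but did not invoke them in Step~2, and the purely combinatorial claim you make there is false.

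The paper organizes this last subcase differently, in a way that builds in the vanishing automatically. Writing $n'=\min\{s:i'_s\neq 0\}$, it distinguishes $n'=n$ from $n'>n$. When $n'=n$, part~(i) applies to $\mathbf{i'}$ and gives $\mathfrak{l}(x\,u_{\mathbf{i'}}w)=\mathbf{i'}-\varepsilon_n\prec\mathbf{i}-\varepsilon_n$, so the target cannot lie in the support. When $n'>n$, the factor at position~$n$ is absent from $u_{\mathbf{i'}}$, and the vanishing brackets above imply that \emph{every} degree-$k$ contraction gives zero, whence $D(x\,u_{\mathbf{i'}}w)<D(\mathbf{i})-k$. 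Your approach can be repaired by inserting the vanishing-coefficient observation at the point where you currently claim $m=n$ is forced; alternatively, adopting the paper's $n'=n$ versus $n'>n$ split makes the logic cleaner.
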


  \begin{proof}(a) (i) Write $h(k+1)u_{\mathbf{i}}w_{\lambda,\mu,\kappa_1,\kappa}$ as in (\ref{formula-1}). It is clear that the only way to obtain the term $u_{\mathbf{i}-\varepsilon_n}w_{\lambda,\mu,\kappa_1,\kappa}$ is to commute $h(k+1)$ with an $e(-k-1)$, which implies $\mathbf{i}-\varepsilon_n\in \supp(h(k+1)u_{\mathbf{i}}w_{\lambda,\mu,\kappa_1,\kappa})$. Note that $$[h(k+1),L(-k)]w_{\lambda,\mu,\kappa_1,\kappa}=[h(k+1),h(-k)]w_{\lambda,\mu,\kappa_1,\kappa}=0.$$ Combining this with Lemma \ref{lemma-1}, it is easy to see that $\mathfrak{l}(h(k+1)u_{\mathbf{i}}w_{\lambda,\mu,\kappa_1,\kappa})=\mathbf{i}-\varepsilon_n$.

  (ii) Assume that $D(\mathbf{i'})<D(\mathbf{i})$, then from Lemma \ref{lemma-1}, we have $$D(h(k+1)u_{\mathbf{i'}}w_{\lambda,\mu,\kappa_1,\kappa})\le D(\mathbf{i'})-k<D(\mathbf{i}-\varepsilon_n)=D(\mathbf{i})-k.$$ So (ii) holds in this case.
   Assume that $D(\mathbf{i'})=D(\mathbf{i})=s$, and $|\mathbf{i'}|<|\mathbf{i}|$. From Lemma \ref{lemma-1}(2), we know that for any
   $\mathbf{j}\in \supp_{s-k}(h(k+1)u_{\mathbf{i'}}w_{\lambda,\mu,\kappa_1,\kappa}) $
   we have $|\mathbf{j}|\le |\mathbf{i'}|-1<|\mathbf{i}-\varepsilon_n|$. So (ii) also holds in this case.

   Assume that $n'=\min\{k|i_k'\ne 0\}$. If $n'=n$, then from (a), $\mathfrak{l}(h(k+1)u_{\mathbf{i'}}w_{\lambda,\mu,\kappa_1,\kappa})=\mathbf{i'}-\varepsilon_n\prec \mathbf{i}-\varepsilon_n.$ we also have (ii) in this case.

   Now we only need to consider the case $D(\mathbf{i'})=D(\mathbf{i})=s$, and $|\mathbf{i'}|=|\mathbf{i}|$, and $n'>n$. Then from Lemma \ref{lemma-1}, it is easy to see that $D(h(k+1)u_{\mathbf{i'}}w_{\lambda,\mu,\kappa_1,\kappa})<s-k=D(\mathbf{i}-\varepsilon_n)$, which completes the proof of (ii).

   (b) (i) Write $(L(k+1)-\delta_{k,0}\mu)u_{\mathbf{i}}w_{\lambda,\mu,\kappa_1,\kappa})$ as in (\ref{formula-1}). Similarly, the only way to obtained the term $u_{\mathbf{i}-\varepsilon_n}w_{\lambda,\mu,\kappa_1,\kappa}$ is to commute $(L(k+1)-\delta_{k,0}\mu)$ with an $L(-k)$, which implies $\mathbf{i}-\varepsilon_n\in \supp((L(k+1)-\delta_{k,0}\mu)u_{\mathbf{i}}w_{\lambda,\mu,\kappa_1,\kappa}))$. Note that $[L(k+1)-\delta_{k,0}\mu,h(-k)]w_{\lambda,\mu,\kappa_1,\kappa}=0$ and $e(-k-1)$ does not occur in $u_{\mathbf{i}}$. And combining with Lemma \ref{lemma-1}, it is easy to see that to $\mathfrak{l}((L(k+1)-\delta_{k,0}\mu)u_{\mathbf{i}}w_{\lambda,\mu,\kappa_1,\kappa})=\mathbf{i}-\varepsilon_n$.

  (ii)  The proof is similar with (a)(ii). We omit the details.

  Similarly as (a) and (b), we have (c).
  \end{proof}

  From Lemma \ref{lemma-2}, we have
\begin{corollary}\label{cor-1}Suppose that $\lambda\mu\ne 0$. Let $0\ne v\in V(\lambda,\mu,\kappa_1,\kappa)$ with $n=min\{k|\mathfrak{l}(v)_k\ne 0\}>0.$
\begin{itemize}
\item[(i)]  If $n=3k+1$ for some $k\in \Z_{\ge 0}$, then $h(k+1)v\ne 0$.
\item[(ii)] If $n=3k+2$ for some $k\in \Z_{\ge 0}$, then $(L(k+1)-\delta_{k,0}\mu)v\ne 0$.
\item[(iii)]  If $n=3k+3$ for some $k\in \Z_{\ge 0}$, then $(e(k)-\delta_{k,0}\lambda)v\ne 0$.\end{itemize}
 \end{corollary}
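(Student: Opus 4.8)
The plan is to deduce Corollary \ref{cor-1} directly from Lemma \ref{lemma-2}, which is precisely set up to make the leading-term bookkeeping work. The corollary is the ``nonvanishing'' consequence of the much sharper ``leading-term identification'' provided by the lemma, so no new computation is needed: the real content already lives in Lemma \ref{lemma-2}, and this step is only a repackaging. Concretely, given a nonzero $v \in V(\lambda,\mu,\kappa_1,\kappa)$ with leading term $\mathfrak{l}(v) = \mathbf{i}$ and $n = \min\{k \mid \mathbf{i}_k \neq 0\} > 0$, I will apply the appropriate operator from the trichotomy ($h(k+1)$, $L(k+1)-\delta_{k,0}\mu$, or $e(k)-\delta_{k,0}\lambda$ according to the residue of $n$ modulo $3$) and show the image is nonzero by exhibiting a surviving leading term.

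First I would write $v = c_{\mathbf{i}} u_{\mathbf{i}} w_{\lambda,\mu,\kappa_1,\kappa} + (\text{lower terms})$, where $c_{\mathbf{i}} \neq 0$ and every term $u_{\mathbf{i'}} w$ in the remainder has $\mathbf{i'} \prec \mathbf{i}$. Treating case (i), where $n = 3k+1$, I apply $h(k+1)$ to $v$. By Lemma \ref{lemma-2}(a)(i), the top piece $c_{\mathbf{i}} h(k+1) u_{\mathbf{i}} w$ has leading term exactly $\mathbf{i} - \varepsilon_n$, contributing a nonzero coefficient there. By Lemma \ref{lemma-2}(a)(ii), none of the lower terms $u_{\mathbf{i'}} w$ with $\mathbf{i'} \prec \mathbf{i}$ contribute to the monomial $u_{\mathbf{i}-\varepsilon_n} w$ at all. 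Hence the coefficient of $u_{\mathbf{i}-\varepsilon_n} w$ in $h(k+1) v$ equals $c_{\mathbf{i}}$ times the (nonzero) coefficient coming from the leading piece, so $h(k+1) v \neq 0$. The cases (ii) and (iii) are identical word for word, invoking parts (b) and (c) of Lemma \ref{lemma-2} respectively.

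The one point that requires a moment of care, rather than being purely mechanical, is confirming that the contributions cannot cancel: I must ensure that the coefficient of $u_{\mathbf{i}-\varepsilon_n} w$ in the full expression $x v = \sum_{\mathbf{i'} \in \supp(v)} v_{\mathbf{i'}}\, x u_{\mathbf{i'}} w$ is genuinely nonzero and not accidentally annihilated by interference. This is exactly what the two-part structure of Lemma \ref{lemma-2} guarantees: part (i) pins down the leading monomial of the top term, and part (ii) certifies that the target monomial $\mathbf{i}-\varepsilon_n$ simply does not appear in the expansion of $x u_{\mathbf{i'}} w$ for any strictly smaller $\mathbf{i'}$. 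Thus cancellation is structurally impossible, and the isolated coefficient survives. I expect this separation of the monomial $\mathbf{i}-\varepsilon_n$ to be the only substantive step; everything else is a direct transcription from the lemma into the language of the corollary. The main obstacle, such as it is, was already overcome inside the proof of Lemma \ref{lemma-2} itself, which is why the corollary follows ``from Lemma \ref{lemma-2}'' in one short stroke.
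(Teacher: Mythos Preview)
Your proposal is correct and matches the paper's approach exactly: the paper simply states that Corollary \ref{cor-1} follows ``from Lemma \ref{lemma-2}'' with no further elaboration, and what you have written is precisely the intended unpacking of that reference. The two-part structure of Lemma \ref{lemma-2} (leading-term identification in (i), non-appearance in (ii)) is designed for exactly the noncancellation argument you give.
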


  \begin{proposition}\label{prop3.6} If $\lambda\mu\ne 0$, the $\LL$-module $V(\lambda,\mu,\kappa_1,\kappa)$ is  simple.
  \end{proposition}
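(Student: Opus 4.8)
The plan is to prove simplicity of $V(\lambda,\mu,\kappa_1,\kappa)$ by showing that any nonzero submodule contains the cyclic generator $w_{\lambda,\mu,\kappa_1,\kappa}$, and hence equals the whole module. Since $V(\lambda,\mu,\kappa_1,\kappa)$ is generated by $w_{\lambda,\mu,\kappa_1,\kappa}$, it suffices to show that any nonzero $v$ in the submodule can be reduced, by applying suitable elements of $\LL$, down to a nonzero scalar multiple of $w_{\lambda,\mu,\kappa_1,\kappa}$ (recall $\mathfrak{l}(w_{\lambda,\mu,\kappa_1,\kappa})=\mathbf{0}$). The key tool is the leading-term analysis packaged in Corollary \ref{cor-1}: whenever the leading term $\mathfrak{l}(v)$ has smallest nonzero index $n>0$, one of the raising operators $h(k+1)$, $L(k+1)-\delta_{k,0}\mu$, or $e(k)-\delta_{k,0}\lambda$ (according to the residue of $n$ modulo $3$) sends $v$ to a nonzero vector whose leading term is strictly smaller in the principal order $\prec$.

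First I would take any nonzero $v$ in a given nonzero submodule $W$. Among all nonzero vectors in $W$ choose one, call it $v$, whose leading term $\mathfrak{l}(v)$ is $\prec$-minimal; this is legitimate because $\prec$ is a total order on $\mathbb{M}$ that is well-founded in the relevant sense (each principal-degree stratum is finite and $D$ takes values in $\Zp$). The claim is that $\mathfrak{l}(v)=\mathbf{0}$. Suppose not; then $n=\min\{k:\mathfrak{l}(v)_k\ne 0\}>0$, so exactly one of the three cases of Corollary \ref{cor-1} applies, producing an element $x\in\{h(k+1),\,L(k+1)-\delta_{k,0}\mu,\,e(k)-\delta_{k,0}\lambda\}$ with $xv\ne 0$. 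By part (i) of the relevant case of Lemma \ref{lemma-2}, together with part (ii) controlling the contributions of all lower terms $\mathbf{i}'\prec\mathbf{i}=\mathfrak{l}(v)$, the leading term of $xv$ is $\mathbf{i}-\varepsilon_n\prec\mathbf{i}$. Since $xv$ is again a nonzero element of $W$ with strictly smaller leading term, this contradicts minimality of $\mathfrak{l}(v)$. Hence $\mathfrak{l}(v)=\mathbf{0}$, meaning $v$ is a nonzero scalar multiple of $w_{\lambda,\mu,\kappa_1,\kappa}$ plus possibly lower terms; but $\mathbf{0}$ is the minimum of $\prec$, so in fact $v\in\C^*\,w_{\lambda,\mu,\kappa_1,\kappa}$, forcing $w_{\lambda,\mu,\kappa_1,\kappa}\in W$ and therefore $W=V(\lambda,\mu,\kappa_1,\kappa)$.

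The main obstacle, which is really the content already dispatched by the preceding lemmas, is verifying that applying $x$ strictly decreases the leading term while keeping the vector nonzero. Lemma \ref{lemma-2}(i) establishes that $x$ acting on $u_{\mathbf{i}}w$ produces $\mathbf{i}-\varepsilon_n$ as its leading term, which hinges on the observation that the only way to generate $u_{\mathbf{i}-\varepsilon_n}w$ is to commute the raising operator past the matching lowering generator (e.g.\ $h(k+1)$ past $e(-k-1)$), while the competing commutators with the other generators either vanish on $w$ or land in lower $\prec$-strata by Lemma \ref{lemma-1}. The subtle point, handled by Lemma \ref{lemma-2}(ii), is that no lower-order term $u_{\mathbf{i}'}w$ with $\mathbf{i}'\prec\mathbf{i}$ can cancel the term $\mathbf{i}-\varepsilon_n$; this is exactly why the reverse-lexicographic refinement inside $\prec$ was introduced, and it uses the hypothesis $\lambda\mu\ne 0$ crucially (so that neither $e(0)-\lambda$ nor $L(1)-\mu$ can accidentally annihilate the required term). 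With these facts in hand the induction on $\prec$ closes cleanly, and I expect no further difficulty in assembling the argument.
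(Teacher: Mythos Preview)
Your argument is correct and rests on the same engine as the paper, namely Lemma~\ref{lemma-2} and its Corollary~\ref{cor-1}. The only difference is in how the final step is packaged: the paper notes that Corollary~\ref{cor-1} forces every Whittaker vector of $V(\lambda,\mu,\kappa_1,\kappa)$ to lie in $\C\, w_{\lambda,\mu,\kappa_1,\kappa}$ and then invokes the general criterion Lemma~\ref{lemma2.1}(iii), whereas you carry out the descent on leading terms directly. One small caveat: your assertion that $\mathfrak{l}(xv)=\mathbf{i}-\varepsilon_n$ is slightly stronger than what part~(ii) of Lemma~\ref{lemma-2} literally states (that part only says $\mathbf{i}-\varepsilon_n$ survives in $\mathrm{supp}(xv)$, not that nothing $\succ \mathbf{i}-\varepsilon_n$ appears from lower terms); the stronger claim does follow from the \emph{proof} of Lemma~\ref{lemma-2}(ii), but the paper's route through Lemma~\ref{lemma2.1} avoids this subtlety altogether, since all one needs there is $xv\ne 0$.
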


\begin{proof} From Corollary \ref{cor-1}, any Whittaker vector has to be contained in $\C w_{\lambda,\mu,\kappa_1,\kappa}$. From Lemma \ref{lemma2.1} we see that  $V(\lambda,\mu,\kappa_1,\kappa)$ is a simple $\LL$-module.
\end{proof}

 \section{Proof of Theorems \ref{ired-non-crit}  and \ref{ired-crit} }
\label{proof-sect-1}

Now we are ready to complete the proof of Theorems \ref{ired-non-crit}  and \ref{ired-crit} in this section.

\subsection{Proof of Theorem  \ref{ired-non-crit} (1)}


For any $\mathbf{i},\mathbf{j},\mathbf{k}\in \mathbb{M}$, denote
$$u_{\mathbf{i},\mathbf{j},\mathbf{k}}=(\ldots e(-n)^{i_n}\cdots e(-1)^{i_1})(\cdots h(-n)^{j_{n+1}}\cdots h(0)^{j_1})(\cdots f(-n)^{k_{n+1}}\cdots f(0)^{k_1})$$
in $ U(\widehat{\sl_2}).$
It is clear that
$$B=\{ u_{\mathbf{i},\mathbf{j},\mathbf{k}} w_{\lambda,\mu,\kappa }|\mathbf{i},\mathbf{j},\mathbf{k}\in \mathbb{M}\}$$ is a basis of $V_{\widehat{\sl_2}}(\lambda,\mu,\kappa )$.
For any $\mathbf{i},\mathbf{j},\mathbf{k}\in \mathbb{M}$, denote
$${\mathcal U}_{\mathbf{i},\mathbf{j},\mathbf{k}}=(\ldots e(-n)^{i_n}\cdots e(-1)^{i_1})(\cdots h(-n)^{j_{n+1}}\cdots h(0)^{j_1})(\cdots L(-n)^{k_{n+1}}\cdots L(0)^{k_1}),$$
$\mathcal{B}=\{ {\mathcal U}_{\mathbf{i},\mathbf{j},\mathbf{k}} w_{\lambda,\mu,\kappa }|\mathbf{i},\mathbf{j},\mathbf{k}\in \mathbb{M}\}.$
Then we have the following

\begin{lemma}\label{lemma3.1} Let $\lambda\in \C^*,\mu, \kappa\in \C$ with $\kappa \ne -2$.
Then $\mathcal{B}$ is a basis of the   Whittaker module $V_{\widehat{\sl_2}}(\lambda,\mu,\kappa )$. In particular,  as $\LL$--modules
$$V_{\widehat{\sl_2}}(\lambda, \mu, \kappa ) \cong V(\lambda,\lambda\mu,\frac{3 \kappa}{ \kappa+2},\kappa)$$.
\end{lemma}

\begin{proof} We define a new total order $``>"$ on $\mathbb{M}$ which is different from that defined in Section \ref{section-Whittaker-b-vir}. Recall that $|\mathbf{i}|=\sum_{l=1}^{+\infty} i_l$ for all $\mathbf{i}\in \mathbb{M}$. Let $\mathbf{i}>\mathbf{j}$ if and only if one of the following conditions is satisfied:
\begin{itemize}
\item $|\mathbf{i}|>|\mathbf{j}|$;
\item $|\mathbf{i}|=|\mathbf{j}|$ and there exists some $k_0\in \Z_{>0}$ such that $i_{k_0}>j_{k_0}$ and $i_k=j_k$ for all $k>k_0$.
\end{itemize}

And the total order $``>"$ on $\mathbb{M}^3$ is defined by $(\mathbf{i}, \mathbf{j}, \mathbf{k})>(\mathbf{i'}, \mathbf{j'}, \mathbf{k'})$ if and only if one of the following conditions is satisfied:
\begin{itemize}
\item $\mathbf{k}>\mathbf{k'}$;
\item $\mathbf{k}=\mathbf{k'}$ and $\mathbf{j}>\mathbf{j'}$;
\item $\mathbf{k}=\mathbf{k'}$,$\mathbf{j}=\mathbf{j'}$ and $\mathbf{i}>\mathbf{i'}$ .
\end{itemize}
Now every element $v$ of $V_{\widehat{\sl_2}}(\lambda,\mu,\kappa )$ can be uniquely written in the form
$$v=\sum_{\mathbf{i},\mathbf{j},\mathbf{k}\in \mathbb{M}} v_{\mathbf{i},\mathbf{j},\mathbf{k}}u_{\mathbf{i},\mathbf{j},\mathbf{k}} w_{\lambda,\mu,\kappa },$$ where only finite many $v_{\mathbf{i},\mathbf{j},\mathbf{k}}\in \C$ are nonzero. We denote by $\supp(v)$ the set of all $(\mathbf{i},\mathbf{j},\mathbf{k})$  with $v_{\mathbf{i},\mathbf{j},\mathbf{k}}\ne 0$. For any $0\ne v\in V_{\widehat{\sl_2}}(\lambda,\mu,\kappa )$,  let $\text{deg}(v)$ denote the maximal (with respect to $>$) element of $\mathrm{supp}(v)$,
called the {\em degree} of $v$. For convenience we define deg($0$)=$-\infty$. It is clear that \begin{equation}\text{deg} (u_{\mathbf{i},\mathbf{j},\mathbf{k}}v)=(\mathbf{i},\mathbf{j},\mathbf{k})+\text{deg}(v).\end{equation}

  Let us first prove by induction on $|\mathbf{k}|$ that
  \vskip .2cm
 {\bf Claim.} If $(\mathbf{i'},\mathbf{j'},\mathbf{k'})\in \supp({\mathcal U}_{\mathbf{i},\mathbf{j},\mathbf{k}}w_{\lambda,\mu,\kappa }-(\frac{\lambda}{\kappa+2})^{|\bf{k}|}u_{\bf{i},\bf{j},\bf{k}}w_{\lambda,\mu,\kappa })$, then $\bf{k}>\bf{k'}$.

 \vskip .2cm

It is trivial for $|\mathbf{k}|=0$. Now suppose that $|\mathbf{k}|>0$. Let $j$ be the minimal non-negative integer such that $k_{j+1}\ne 0$.

From the definition of $L(n)$ we have
\begin{equation}L(-j)w_{\lambda,\mu,\kappa }=\frac{1}{\kappa+2}(\lambda f(-j)+\sum_{i=0}^{j-1} e(-j+i) f(-i)+\mu e(-j-1)+u')w_{\lambda,\mu,\kappa },\end{equation} for some  $u'\in U({\hat{h}}^{\le 0})$, where ${\hat{h}}^{\le 0}=\text{span}_{\C} \{h(-n)|n\ge 0\}$. In particular, the Claim also holds for $|{\bf{k}}|=1$. So we only need to consider $|{\bf{k}}|>1$.
Now we may write $${\mathcal U}_{\mathbf{0},\mathbf{0},\mathbf{k}}w_{\lambda,\mu, \kappa}=\frac{1}{\kappa+2}{\mathcal U}_{\mathbf{0},\mathbf{0},\mathbf{k}-\varepsilon_{j+1}} (\lambda f(-j)+\sum_{i=0}^{j-1} e(-j+i) f(-i)+\mu e(-j-1)+u')w_{\lambda,\mu,\kappa },$$ $$v_1=\frac{1}{\kappa+2}(\lambda f(-j)+\sum_{i=0}^{j-1} e(-j+i) f(-i)+\mu e(-j-1)+u'){\mathcal U}_{\mathbf{0}, \mathbf{0},\mathbf{k}-\varepsilon_{j+1}} w_{\lambda,\mu,\kappa },$$
$$v_2=\frac{1}{\kappa+2}[{\mathcal U}_{\mathbf{0}, \mathbf{0},\mathbf{k}-\varepsilon_{j+1}},\lambda f(-j)+\sum_{i=0}^{j-1} e(-j+i) f(-i)+\mu e(-j-1)+u'] w_{\lambda,\mu,\kappa }.$$ Then ${\mathcal U}_{\mathbf{0},\mathbf{0},\mathbf{k}}w_{\lambda,\mu,\kappa }=v_1+v_2$. Note that $v_1, v_2$ depend on $\mathbf{k}$. From induction hypothesis, we have
${\mathcal U}_{\mathbf{0},\mathbf{0},\mathbf{k}-\varepsilon_{j+1}}w_{\lambda,\mu,\kappa }\in$ $$ (\frac{\lambda}{\kappa+2})^{|\bf{k}|-1}u_{\bf{0},\bf{0},\mathbf{k}-\varepsilon_{j+1}}w_{\lambda,\mu,\kappa }
+\sum_{\bf{i''},\bf{j''},\mathbf{k''}\in \mathbb{M}, \mathbf{k}-\varepsilon_{j+1}>\mathbf{k''}}\C u_{\bf{i''},\bf{j''},\mathbf{k''}}w_{\lambda,\mu,\kappa },$$ to give
$$v_1 \in (\frac{\lambda}{\kappa+2})^{|\bf{k}|}u_{\bf{0},\bf{0},\mathbf{k}}w_{\lambda,\mu,\kappa }+\sum_{\bf{i'},\bf{j'},\mathbf{k'}\in \mathbb{M}, \mathbf{k'}<\mathbf{k}}\C u_{\bf{i'},\bf{j'},\mathbf{k'}}w_{\lambda,\mu,\kappa }.$$
By using the fact $[L(n), x(m)]=-mx(n+m)$ for all $x\in \sl_2$, we have
$$v_2\in \sum_{|{\bf{l}}|\le |{\bf{k}}|-2}\,\,\sum_{s,t\in \Z_{\ge 0}} (\C f(-s)+ \C e(-s+1)f(-t)+ U(\hat{h}^{\le 0}){\mathcal U}_{\mathbf{0}, \mathbf{0},\mathbf{l}}w_{\lambda,\mu,\kappa }.$$ Again from induction hypothesis, we have $\deg(v_2)<(\bf{0},\bf{0},\bf{k})$.

So we have proved the claim for $(\bf{i},\bf{j},\bf{k})=(\bf{0},\bf{0},\bf{k})$. Using the fact that $$\deg(u_{\bf{i},\bf{j},\bf{0}}u_{\bf{i}',\bf{j}',\bf{k}})=\deg u_{\bf{i}+\bf{i}',\bf{j}+\bf{j}',\bf{k}},$$ we may easily see that the claim holds for all $(\bf{i},\bf{j},\bf{k})$.

From the Claim  we see that deg$({\mathcal U}_{\mathbf{i},\mathbf{j},\mathbf{k}}w_{\lambda,\mu,\kappa })=(\mathbf{i},\mathbf{j},\mathbf{k})$. Then the linear independence of $\mathcal{B}$ follows. And from the Claim and  by induction on $(\mathbf{k},>)$, we may deduce that $u_{\mathbf{i},\mathbf{j},\mathbf{k}}w_{\lambda,\mu,\kappa }$ is a linear combination of $\mathcal{B}$. This completes the proof of the first statement. Since,
$L(1) w_{\lambda,\mu,\kappa } = \lambda \mu w_{\lambda,\mu,\kappa }, $
 it is easy to see that $V_{\widehat{\sl_2}}(\lambda, \mu, \kappa )$ $\cong V(\lambda,\lambda\mu,\frac{3 \kappa}{ \kappa+2},\kappa)$ as $\widehat{\frak b} \rtimes \text{Vir} $--module. The proof follows.
\end{proof}

 Now    combining this lemma with Propositions  \ref{ired-crit-1} and  \ref{prop3.6} we see  that  $V_{\widehat{\sl_2}}(\lambda, \mu, \kappa )$ is a simple $\widehat{\sl_2}$--module if $\lambda\mu(\kappa+2)\ne0$.

\subsection{Some results from \cite{Ch2}}

We first  recall some notations and results from Chapter 4 in \cite{Ch2}.
Let $$\aligned \Omega=&2(c+2)d+\frac{1}{2}h(0)^2+h(0)+2f(0)e(0) \\ &+2\sum_{n=1}^{+\infty} (e(-n)f(n)+f(-n)e(n)+\frac{1}{2}h(-n)h(n))\endaligned$$ be the Casimir operator for $\widetilde{sl_2}.$

Note that $\Omega=2(\kappa+2)(d+L(0))$ as operators on any restricted $\widetilde{\sl_2}$ module of level $\kappa \ne -2 $.

Let $$z=\frac{1}{2}h(0)^2+h(0)+2f(0)e(0)$$ be the Casimir element for $\mathfrak{S}=\C e(0)+\C h(0)+\C f(0)$. For any $(\dot{d},\dot{z})\in \C^2$, Denote by $${V}_{\widetilde{\sl_2}}(\lambda,0,\kappa ,\dot{d},\dot{z})=\bar{V}_{\widetilde{\sl_2}}(\lambda,0,\kappa )/(U(\widetilde{\sl_2})(d-\dot{d})+U(\widetilde{\sl_2})(z-\dot{z}))w_{\lambda,0,\kappa }.$$

\begin{lemma} \label{lemma3.7}Assume that $\lambda\ne 0$ and $\kappa \ne -2 $. Let $V=V_{\widetilde{\sl_2}}(\lambda,0,\kappa ,\dot{d},\dot{z})$. Then
\begin{itemize}
\item[(1)]  $V$ have a unique simple quotient;
\item[(2)] $\Omega|V=(2(\kappa+2)\dot{d}+\dot{z}){\rm id}_{V}$;
\item[(3)]  If $\dot{z}\ne \frac{(\frac{i}{m}(\kappa+2)-m)^2-1}{2}$ for any $i,m\in \Z_{>0}$, then $V_{\widetilde{\sl_2}}(\lambda,0,\kappa ,\dot{d},\dot{z})$ is simple;
\item[(4)]  Let $M$ be any simple Whittaker module of level $\kappa$ for $\widetilde{\sl_2}$ with a Whittaker vector $w$ of type $(\lambda,0)$.  If there exists some nonzero polynomial $p(x)$ such that $p(d)w=0$, then $M$ is isomorphic to the simple quotient of $V_{\widetilde{\sl_2}}(\lambda,0,\kappa ,\dot{d},\dot{z})$ for some $\dot{d},\dot{z}\in \C$.
\end{itemize}\end{lemma}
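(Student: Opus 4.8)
The plan is to handle the parts in the order (ii), (i), (iii), (iv), since the computation of $\Omega$ on the cyclic vector underlies everything. Write $w=w_{\lambda,0,\kappa}$ and use the $d$-grading $V=\bigoplus_{j\ge0}V_j$, where $V_j$ is the $d$-eigenspace of eigenvalue $\dot d-j$; by the PBW theorem $V$ is free over the negative $d$-degree part with top space $V_0=U(\mathfrak{S})w$. For (ii), I would evaluate $\Omega$ on $w$: since $\mu=0$ all of $e(n)w$, $f(n)w$, $h(n)w$ vanish for $n\ge1$, so every summand of $2\sum_{n\ge1}\big(e(-n)f(n)+f(-n)e(n)+\tfrac12 h(-n)h(n)\big)$ kills $w$, leaving $\Omega w=(2(\kappa+2)\dot d+z)w=(2(\kappa+2)\dot d+\dot z)w$ by the defining relations $dw=\dot d w$ and $zw=\dot z w$. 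As $\Omega$ is central in $U(\widetilde{\sl_2})$ and $V$ is cyclic, $\Omega$ acts by this scalar on all of $V$.

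For (i), the key point is that $V_0$ is irreducible as a module for $\mathfrak{S}\cong\sl_2$: it is the $\sl_2$-Whittaker module on which $e(0)$ acts by $\lambda\ne0$ and the Casimir $z$ by $\dot z$, which is irreducible by Kostant's theorem in the regular nilpotent case. Any submodule $W$ is $d$-stable, hence graded, so $W\cap V_0$ is an $\mathfrak{S}$-submodule of the irreducible $V_0$; thus either $W\cap V_0=V_0\ni w$ (forcing $W=V$) or $W\cap V_0=0$ and $W\subseteq\bigoplus_{j\ge1}V_j$. Hence the sum of all proper submodules lies in $\bigoplus_{j\ge1}V_j$ and is proper, giving a unique maximal submodule and therefore a unique simple quotient.

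For (iii), by Lemma \ref{lemma2.1}(ii)--(iii) it suffices to show the space of Whittaker vectors of $V$ is one-dimensional. The grading forces each homogeneous component of a Whittaker vector to be a Whittaker vector, and since $V$ is bounded above in $d$-eigenvalue the degree-$0$ components reduce to $\mathfrak{S}$-Whittaker vectors of $V_0$, hence lie in $\C w$; so simplicity reduces to the absence of nonzero Whittaker vectors in $V_j$ for $j\ge1$. By (ii), $L(0)$ acts on $V_j$ by $\frac{\dot z}{2(\kappa+2)}+j$, so this is exactly the question of singular vectors at prescribed conformal weights. Their existence is governed by a determinant (Shapovalov-type) computation on each $V_j$, equivalently by the Kac--Kazhdan reducibility of the associated Verma modules, which, after writing $\dot z=\tfrac12\big((\nu+1)^2-1\big)$, produces factors vanishing precisely along the locus $\nu+1=\frac{i}{m}(\kappa+2)-m$, i.e. at $\dot z=\tfrac12\big((\frac{i}{m}(\kappa+2)-m)^2-1\big)$. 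The genericity hypothesis avoids every zero, so no positive-degree Whittaker vector exists and $V$ is simple. Establishing this determinant formula, equivalently matching the reducibility precisely with the Kac--Kazhdan equations, is the main obstacle, and is where I would invoke Chapter 4 of \cite{Ch2}.

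For (iv), given $p(d)w=0$ with $p\ne0$, the finite-dimensional space $\C[d]w$ is $d$-stable and hence contains a $d$-eigenvector $u=q(d)w\ne0$ with $du=\dot d u$ for a root $\dot d$ of $p$. Since $[e(0),d]=0$ and $[d,f(1)]=f(1)$ give $f(1)q(d)=q(d-1)f(1)$, we get $e(0)u=\lambda u$ and $f(1)u=q(d-1)f(1)w=0$; the remaining positive modes kill $u$ by the same degree-shift argument, so $u$ is a Whittaker vector of type $(\lambda,0)$. As $M$ is simple, $\Omega$ acts by a scalar $\omega_0$; evaluating $\Omega$ on $u$ as in (ii) gives $zu=(\omega_0-2(\kappa+2)\dot d)u=:\dot z u$. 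Therefore $M=U(\widetilde{\sl_2})u$ is a quotient of $V_{\widetilde{\sl_2}}(\lambda,0,\kappa,\dot d,\dot z)$, and being simple it is the unique simple quotient furnished by (i).
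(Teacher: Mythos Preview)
Your arguments for (i), (ii), and (iv) are correct and considerably more self-contained than the paper's proof, which simply cites four results from Christodoulopoulou's thesis \cite{Ch2} (Proposition~3.15, Corollary~4.4, Corollary~4.16, Corollary~3.29 for (i)--(iv) respectively) and records nothing beyond a typo correction. Your direct evaluation of $\Omega w$, the $d$-grading argument combined with Kostant's irreducibility of the $\sl_2$-Whittaker top space $V_0$, and the extraction of a simultaneous $d$- and $z$-eigenvector from $\C[d]w$ in (iv) are all sound and are essentially what underlies the results cited from \cite{Ch2}. One small phrasing point in (ii): $\Omega$ is not literally an element of $U(\widetilde{\sl_2})$ since it is an infinite sum; you should say instead that it is a well-defined operator on restricted modules commuting with the $\widetilde{\sl_2}$-action (the paper notes $\Omega=2(\kappa+2)(d+L(0))$ on such modules). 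For (iii) you correctly isolate the genuine obstacle---a Shapovalov/Kac--Kazhdan determinant identifying exactly when positive-degree Whittaker vectors appear---and honestly defer to Chapter~4 of \cite{Ch2}; this is precisely what the paper does as well. So the only difference is expository: you unpack three of the four citations into short direct proofs, while the paper points to \cite{Ch2} for all four.
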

\begin{proof} Statements (1)-(4) are the immediately consequences of Proposition 3.15, Corollary 4.4, Corollary 4.16 and Corollary 3.29 in \cite{Ch2}, respectively. We remark that the typo, from the proof of Corollary 4.16 in \cite {Ch2},  $k\in \Z_{\ge 0}$ should be $k\in \Z_{>0}$ there. \end{proof}

\begin{lemma}\label{lemma3.8} Assume that $\kappa \ne -2$. For any $a\in \C$ and any simple Whittaker $\widehat{\sl_2}$-module $V$, we can make $V$ into an $\widetilde{\sl_2}$-module by defining $d =-L(0)+a$ on $V$. Moreover, any simple Whittaker $\widetilde{\sl_2}$-module with central charge $\kappa \ne -2 $ is of this form.\end{lemma}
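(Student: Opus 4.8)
The plan is to treat the two assertions separately, both resting on the Sugawara relation $[L(0),x(m)]=-m\,x(m)$ for $x\in\{e,f,h\}$, which holds precisely because $\kappa\ne-2$.

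For the first assertion, I would first note that any simple Whittaker $\widehat{\sl_2}$-module $V$ is a simple quotient of some universal module $V_{\widehat{\sl_2}}(\lambda,\mu,\kappa)$, hence restricted; therefore it is a module over $V_\kappa(\sl_2)$ and the operator $L(0)$ is well defined on $V$. I then set $d:=-L(0)+a$ and verify the defining relations of $\widetilde{\sl_2}$: for $x\in\{e,f,h\}$ and $n\in\Z$,
$$[d,x(n)]=-[L(0),x(n)]=n\,x(n),$$
while $[d,c]=0$ because $c$ acts as the scalar $\kappa$. These are exactly the relations $[d,x(n)]=n\,x(n)$ that define the action of $d$, so $V$ becomes an $\widetilde{\sl_2}$-module. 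Moreover, since $L(0)$ preserves every $\widehat{\sl_2}$-submodule $W$ (on a restricted module $L(0)w$ is a finite combination of vectors obtained from the $\widehat{\sl_2}$-action on $w$), every $\widehat{\sl_2}$-submodule is automatically $d$-stable; hence the $\widehat{\sl_2}$- and $\widetilde{\sl_2}$-submodule lattices of $V$ coincide, $V$ remains simple, and it keeps the same Whittaker vector since $(\widetilde{\sl_2})_+=(\widehat{\sl_2})_+$ contains no $d$.

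For the converse, let $M$ be a simple Whittaker $\widetilde{\sl_2}$-module of level $\kappa\ne-2$. Again $M$ is restricted, so $L(0)$ is defined, and I would consider the operator $d+L(0)$. A direct computation gives, for all $x\in\{e,f,h\}$ and $n\in\Z$,
$$[d+L(0),x(n)]=n\,x(n)-n\,x(n)=0,$$
and since $L(0)$ is the degree-zero Sugawara component one also has $[d,L(0)]=0$, so $d+L(0)$ commutes with the entire action of $\widetilde{\sl_2}$; that is, it is an endomorphism of the $\widetilde{\sl_2}$-module $M$.

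The decisive step is to conclude that this endomorphism is scalar. Since $M$ is generated by a single Whittaker vector over the countable-dimensional algebra $U(\widetilde{\sl_2})$, it has at most countable dimension; as $M$ is simple and the ground field $\C$ is uncountable, Schur's lemma in Dixmier's form forces $d+L(0)=a\cdot\mathrm{id}$ for some $a\in\C$, whence $d=-L(0)+a$ on $M$. Finally, exactly as above the $\widehat{\sl_2}$-submodules of $M$ are $d$-stable, so $U(\widehat{\sl_2})w=M$ and $M$ is a simple Whittaker $\widehat{\sl_2}$-module; it is thus recovered from that module by the recipe $d=-L(0)+a$, as claimed. I expect the main obstacle to be the justification that $d+L(0)$ acts as a scalar: one must invoke the uncountable-field (Dixmier) version of Schur's lemma together with the countable dimensionality of the cyclic module $M$, rather than the naive finite-dimensional Schur's lemma.
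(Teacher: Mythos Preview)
Your proof is correct and follows essentially the same route as the paper. The paper's argument is terser: for the second assertion it invokes the affine Casimir operator $\Omega$, which commutes with all of $\widetilde{\sl_2}$ by general theory and hence acts as a scalar on a simple module; since $\Omega=2(\kappa+2)(d+L(0))$ on restricted modules of level $\kappa\ne-2$, this forces $d+L(0)$ to be scalar. Your direct verification that $d+L(0)$ centralizes $\widetilde{\sl_2}$ via the Sugawara relations is exactly the unpacked version of the same computation, and your care in citing the Dixmier form of Schur's lemma makes explicit a point the paper leaves implicit.
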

\begin{proof}The first part of the claim is obvious. The second part follows from the fact the generalized Casimir element $\Omega$ commutes with $\widetilde{\sl_2}$ and acts as a scalar on each simple Whittaker $\widetilde{\sl_2}$ module.\end{proof}

\subsection{Proof of Theorem \ref{ired-non-crit} (2)-(4) }
\label{proof-critical-direct}

(2) Note that $\widehat{\sl_2}$ hence $U(\widehat{\sl_2})$ is $\Z$-graded with respect to $d$.  Denote $U(\widehat{\sl_2})_i=\{u\in U(\widehat{\sl_2})|[d,u]=iu\}$. Then $V_{\widehat{\sl_2}}(\lambda, 0, \kappa )=\oplus_{i\in \Z} U(\widehat{\sl_2})_i w_{\lambda, 0,\kappa }$. Since $\mu=0$,  from Proposition 2.1, Lemma \ref{lemma3.1} and the definition of $V(\lambda, 0, \kappa_1, \kappa)$ in Section 5 we see that
 $U(\widehat{\sl_2})(L(0)-a)w_{\lambda, 0,\kappa }$  is a nonzero proper submodule of  $V_{\widehat{\sl_2}}(\lambda, 0, \kappa )$ for any $a\in \C$.
 Suppose that  $V$ is a maximal submodule of $V_{\widehat{\sl_2}}(\lambda, 0, \kappa )$. Then $V$ must contain a Whittaker vector $v=\sum_{i=0}^r a_iu_{-i}w_{\lambda,0,\kappa }$ where $u_{-i}\in U_{-i}$ and $a_ru_{-r}w_{\lambda,0,\kappa }\ne 0$. It is clear that $u_{-r}w_{\lambda,0,\kappa }$ is also a Whittaker vector (which may be not in $V$).

If $r>0$, from Lemma \ref{lemma3.1}, we know that the image of $u_{-r}w_{\lambda,0,\kappa }$ is a nonzero Whittaker vector in $V_{\widehat{\sl_2}}(\lambda, 0, \kappa )/U(\widehat{\sl_2})(L(0)-a)w_{\lambda, 0,\kappa }$ for all but finitely many $a\in \C$ since $u_{-r}$ has finitely many factors. Therefore the Whittaker  module $V_{\widehat{\sl_2}}(\lambda, 0, \kappa )/U(\widehat{\sl_2})(L(0)-a)w_{\lambda, 0,\kappa }$(which is graded with respect to the action of $L(0)$) is not simple for all but finitely many $a\in \C$. Now we may regarded $V_{\widehat{\sl_2}}(\lambda, 0, \kappa )/U(\widehat{\sl_2})(L(0)-a)w_{\lambda, 0,\kappa }$ as a $\widetilde{\sl_2}$-module by defining $d=-L(0)$, which is of course not simple too for all but finite many $a\in \C$. However, from Lemma \ref{lemma3.7} (3), for any given $\kappa \ne -2 $, we have $$V_{\widehat{\sl_2}}(\lambda, 0, \kappa )/U(\widehat{\sl_2})(L(0)-a)w_{\lambda, 0,\kappa }\cong V_{\widetilde{\sl_2}}(\lambda,0,\kappa ,-a, 2(\kappa+2)a)$$ is simple as $\widetilde{\sl_2}$ module for all but at most countably many $a$, a contradiction. So we have $r=0$.

Now $v\in \C[h(0),L(0)]w_{\lambda, 0,\kappa }$ which was assumed to be a Whittaker vector. From $(e(0)-\lambda)v=0$, we have $v\in \C[L(0)]w_{\lambda, 0,\kappa }$. So $V_{\widehat{\sl_2}}(\lambda, 0, \kappa )/V$ satisfies the condition of Lemma \ref{lemma3.7} (4), which completes the proof.

(3) Let $\sigma$ be the automorphism of $\widetilde{\sl_2}$ defined by $$\sigma(e(i))=f(i+1), \sigma(f(i))=e(i-1),\sigma(h(i))=-h(i)+\delta_{i,0}c,$$ $$ \sigma(c)=c, \sigma(d)=d+\frac{h(0)}{2}.$$ Then $V_{\widehat{\sl_2}}(0,\mu,\kappa )$ is equivalent to $V_{\widehat{\sl_2}}(\mu,0,\kappa )$ via $\sigma$.
 Now (3) follows from (2).

(4) Again any simple $\widehat{\sl_2}$ quotient of $V_{\widehat{\sl_2}}(0,0,\kappa )$ is also a simple $\widetilde{\sl_2}$-module by taking $d=-L(0)$. And (4) follows from Theorem 1.1 in \cite{MZ}.\qed

\subsection{Proof of Theorem \ref{ired-crit}  }

Denote $w=w_{\lambda,\mu,-2}$ in $V_{\widehat{\sl_2}}(\lambda,\mu,-2)$.

\begin{lemma}\label{lemma5-1} Let $\lambda\in \C^*,\mu\in \C, c(z)=\sum_{n\le 0} c_n z^{-n-2}\in \C((z))$.
\begin{itemize}
\item[(a)] The  Whittaker module $V_{\widehat{\sl_2}}(\lambda,\mu,-2)$ has a basis
$\{\mathcal{V}_{\mathbf{i},\mathbf{j},\mathbf{k}} | \mathbf{i},\mathbf{j},\mathbf{k}\in \mathbb{M}\}$
where $\mathcal{V}_{\mathbf{i},\mathbf{j},\mathbf{k}}=$ $$(\ldots e(-n)^{i_n}\cdots e(-1)^{i_1})(\cdots h(-n)^{j_{n+1}}\cdots h(0)^{j_1})(\cdots T(-n)^{k_{n+1}}\cdots T(0)^{k_1}) w.$$
\item[(b)] The Whittaker module $V_{\widehat{\sl_2}}(\lambda,\mu,-2, c(z))=V_{\widehat{\sl_2}}(\lambda,\mu)/\langle T_n-c_n|n\le 0\rangle$ has a basis
 $$\{(\ldots e(-n)^{i_n}\cdots e(-1)^{i_1})(\cdots h(-n)^{j_{n+1}}\cdots h(0)^{j_1})\bar{w}| \mathbf{i},\mathbf{j} \in \mathbb{M}\}$$
 where $\bar{w}$ is the image of $w$.
\item[(c)] $V_{\widehat{\sl_2}}(\lambda,\mu, -2,c(z))$ is simple as $U(\hat{{\frak b}})$-module.
\end{itemize}\end{lemma}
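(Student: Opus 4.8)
The plan is to prove the three parts of Lemma \ref{lemma5-1} in sequence, with part (a) being the PBW-type backbone and part (c) the genuinely new content. First I would establish (a). The key observation is that the elements $T(-n)$, $n\ge 1$, together with the $e(-m)$ and $h(-m)$ generate, upon acting on $w$, the same space as a standard PBW basis of $V_{\widehat{\sl_2}}(\lambda,\mu,-2)$. The natural spanning set is $B=\{u_{\mathbf{i},\mathbf{j},\mathbf{k}}w\}$ using $f(-n)$ in the last slot, which is a basis by the same argument as in Lemma \ref{lemma3.1}. I would then argue, exactly as in the noncritical case, that replacing the factors $\prod f(-n)^{k_{n+1}}$ by $\prod T(-n)^{k_{n+1}}$ produces a triangular change of basis. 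Concretely, from the definition $T(z)=Y(t,z)$ one computes
\begin{equation*}
T(-n)w=\tfrac12\bigl(\lambda f(-n)+\mu e(-n-2)+\textstyle\sum_{i} e(-n+i-1)f(-i)+u'\bigr)w,
\end{equation*}
so that the leading term of $T(-n)w$ (with respect to the order on $\mathbb{M}$ from Lemma \ref{lemma3.1}) is $\tfrac{\lambda}{2}f(-n)w$, and since $\lambda\ne0$ this makes the transition matrix between $\{\mathcal{V}_{\mathbf{i},\mathbf{j},\mathbf{k}}\}$ and $B$ unitriangular up to the nonzero scalar $(\lambda/2)^{|\mathbf{k}|}$. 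Hence $\{\mathcal{V}_{\mathbf{i},\mathbf{j},\mathbf{k}}\}$ is also a basis.

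Next, part (b) is a formal consequence of (a). Since the $T(n)$ are central and commute with everything, the submodule $\langle T_n-c_n\mid n\le0\rangle$ is spanned by $\mathcal{V}_{\mathbf{i},\mathbf{j},\mathbf{k}}$ with $\mathbf{k}\ne\mathbf{0}$ after subtracting the appropriate scalar combination; quotienting out sets each $T(-n)$ equal to the scalar $c_{n}$ (for $n\ge 0$, matching the expansion of $c(z)$) and thereby kills the entire $\mathbf{k}$-direction of the basis. What survives is exactly $\{(\prod e)(\prod h)\bar w\mid \mathbf{i},\mathbf{j}\in\mathbb{M}\}$, which I would confirm is linearly independent and spanning by noting that the ideal is homogeneous with respect to the $\mathbf{k}$-grading so no collapse occurs among the $\mathbf{k}=\mathbf{0}$ monomials.

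Finally, part (c) is where the real work lies: I must show $V_{\widehat{\sl_2}}(\lambda,\mu,-2,c(z))$ is irreducible as a module over $U(\hat{\frak b})$, the algebra generated by $e(n),h(n)$, $n\in\Z$. By Lemma \ref{lemma2.1}(iii) it suffices to prove that the space of Whittaker vectors is one-dimensional. The strategy mirrors Lemma \ref{lemma-2} and Corollary \ref{cor-1}: I would set up a leading-term analysis on the basis from (b), using a principal order analogous to $\prec$, and show that for any $v$ whose leading term has smallest nonzero index $n>0$, there is a raising operator — either some $h(k)$ (acting on an $e(-k)$ factor) or some $e(k)$ (acting on an $h(-k)$ factor, producing an $e$) — that is nonzero on $v$. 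The main obstacle is that $\hat{\frak b}$ is smaller than $\hat{\frak b}\rtimes\mathrm{Vir}$: at the critical level the Virasoro field is unavailable, replaced by the central $T(n)$ which act as scalars and so cannot be used to lower degree. Thus the argument of Corollary \ref{cor-1}, which relied on $L(k)$ to reach the $f$-slot, must be replaced by a direct computation showing that $e$ and $h$ alone suffice to annihilate down to $\C\bar w$. The key point I expect to check carefully is that $[e(k),h(-k)]$ and $[h(k),e(-k)]$ act nontrivially in the right way on leading terms — essentially, that since only $e$'s and $h$'s appear in the basis (no $f$'s remain), the operators $e(k)$ and $h(k)$ for suitable $k$ strictly decrease the leading term of any non-multiple of $\bar w$, forcing every Whittaker vector into $\C\bar w$.
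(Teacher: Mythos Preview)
Your proposal is correct and follows essentially the same approach as the paper: part (a) by the triangular change of basis exactly as in Lemma \ref{lemma3.1} with $T(-n)$ in place of $L(-n)$, part (b) as a formal consequence, and part (c) via the leading-term analysis of Corollary \ref{cor-1}(i) and (iii) only (since the $L$-slot is absent in the basis of (b)), then concluding by Lemma \ref{lemma2.1}(iii) as in Proposition \ref{prop3.6}. The paper's own proof is in fact terser than yours, merely citing these earlier results as analogous and omitting details.
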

\begin{proof} (a) The proof is similar as that of Lemma \ref{lemma3.1}. We omit the details.

Part (b) follows from (a).

(c) By using completely analogous proof, we may deduce a similar result of Corollary \ref{cor-1} (i), (ii) and (iii) for $\widehat{\frak b}$--module $V_{\widehat{\sl_2}}(\lambda,\mu, -2,c(z))$. Then similar to Proposition \ref{prop3.6}, we can deduce that $V_{\widehat{\sl_2}}(\lambda,\mu, -2, c(z))$ is simple as $\widehat{\frak b}$--module.
\end{proof}
Now we shall finish the proof of Theorem \ref{ired-crit}.
\begin{proof}(a) follows from Lemma \ref{lemma5-1} (c).

(b) Let $\bar{V}$ be any simple quotient of  $V_{\widehat{\sl_2}}(\lambda,\mu, -2)$. From $[T(n),\widehat{\sl}_2]=0$ on $V_{\widehat{\sl_2}}(\lambda,\mu,-2)$,  we have $T(n)$ act as scalar on  $\bar{V}$  for any $n\in \Z$.  Say $\sum_{n\le 0} T(n)z^{-n-2}$ $ =c(z)$ on $\bar{V}$. Then $\bar{V}$ has to isomorphic to $V_{\widehat{\sl_2}}(\lambda,\mu,-2,c(z))$.

(c) It follows from the fact that  $T(z)=\lambda\mu z^{-3}+c(z)$ on $V_{\widehat{\sl_2}}(\lambda,\mu,-2,c(z))$.
\end{proof}


\section{Whittaker modules for  the affine Lie algebra $\widetilde{\sl_2}$}

%
%
\label{section-whittaker-widetilde-sl2}

From Lemma \ref{lemma3.8} and Theorem \ref{ired-non-crit}, we know that simple Whittaker modules for  the affine Lie algebra $\widetilde{\sl_2}$ with noncritical level  are completely determined.
So we only need to consider the critical case, i.e., $\kappa=-2$.

We shall start with one general method for constructing simple $\widetilde{\sl_2}$--modules from simple $\widehat{\sl_2}$--modules:

\begin{theorem}\label{thm-critical-novi}
 Assume that $N$ is any simple $V_{-2}(\sl_2)$--module  (i.e., a simple restricted $\widehat{\sl_2}$ module at the critical level) such that $T(k_0) \ne 0$ on $N$ for certain $k_0 \ne 0$. Then
 $$  \widetilde{N} = {\rm Ind}_{\widehat{\sl_2}}  ^{\widetilde{\sl_2}} N$$
 is a simple ${\widetilde{\sl_2}}$--module.
\end{theorem}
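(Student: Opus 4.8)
The plan is to show that $\widetilde{M} = {\rm Ind}_{\widehat{\sl_2}}^{\widetilde{\sl_2}} M$ is irreducible by analyzing how the added element $d$ interacts with the central elements $T(k)$ coming from the critical-level vertex algebra structure. Since $\widetilde{\sl_2} = \widehat{\sl_2} \oplus \C d$ and $[d, x(n)] = n\, x(n)$, the induced module is, as a vector space, $\C[d] \otimes M$, and every element can be written uniquely as $\sum_{j=0}^r d^j \otimes m_j$ with $m_j \in M$. The key structural fact I would exploit is that while each $T(k)$ is central in $U(\widehat{\sl_2})$ and hence acts as a scalar on the irreducible $M$ (say $T(k)|_M = \chi_k \,{\rm id}$), the element $d$ does \emph{not} commute with $T(k)$ inside $\widetilde{\sl_2}$: from $T(z) = Y(t,z)$ and the grading, one computes $[d, T(k)] = -k\, T(k)$, so $T(k)$ has $d$-weight $-k$. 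This noncommutativity is precisely what makes the central elements useful as a tool to move within $\widetilde{M}$.

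The main step is to prove that any nonzero $\widetilde{\sl_2}$-submodule $N \subseteq \widetilde{M}$ contains a nonzero element of $M = 1 \otimes M$. Take a nonzero $v = \sum_{j=0}^r d^j \otimes m_j \in N$ of minimal degree $r$ in $d$, with $m_r \ne 0$. I would act on $v$ by $T(k_0)$, where $k_0 \ne 0$ is the index guaranteed by the hypothesis with $T(k_0)|_M = \chi_{k_0} \ne 0$. Using $[d, T(k_0)] = -k_0 T(k_0)$ repeatedly, the commutator $[d^j, T(k_0)]$ expands into a sum of terms of the form (constant)$\cdot d^{j'} T(k_0)$ with $j' < j$; concretely $T(k_0) d^j = (d + k_0)^j T(k_0)$ as operators, since conjugating $d$ by $T(k_0)$ shifts it by $k_0$. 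Thus
$$
T(k_0)\, v = \sum_{j=0}^r (d+k_0)^j \otimes \chi_{k_0} m_j,
$$
which is again an element of $N$ of degree $r$ in $d$, with leading coefficient $\chi_{k_0} m_r \ne 0$. Now consider the new vector $T(k_0)\,v - \chi_{k_0}\, v = \sum_{j=0}^r \big((d+k_0)^j - d^j\big)\otimes \chi_{k_0} m_j$; each $(d+k_0)^j - d^j$ has degree strictly less than $j$ in $d$, so this difference lies in $N$ and has degree $< r$. By minimality of $r$, either this difference is zero or it furnishes a contradiction. Iterating this argument (applying $T(k_0)$ and taking successive differences, or equivalently applying a suitable polynomial in $T(k_0)$ that kills the lower-order terms) drives the degree down, eventually producing a nonzero element of $1 \otimes M$ inside $N$.

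Once $N \cap (1 \otimes M) \ne 0$, I would invoke the irreducibility of $M$ as a $\widehat{\sl_2}$-module: since $N \cap M$ is a nonzero $\widehat{\sl_2}$-submodule of the irreducible $M$, it equals all of $M$, so $1 \otimes M \subseteq N$. Then applying $d, d^2, \dots$ gives $d^j \otimes M \subseteq N$ for all $j$, whence $N = \widetilde{M}$. This establishes irreducibility. I expect the main obstacle to be making the descent in $d$-degree fully rigorous, that is, verifying that repeated application of the operator $T(k_0)$ (with its built-in shift $d \mapsto d + k_0$) combined with taking differences genuinely lowers the $d$-degree while never annihilating the leading data, which relies crucially on $\chi_{k_0} \ne 0$ and $k_0 \ne 0$. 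A clean way to package this is to observe that the operators $\{d^j T(k_0)\}$ acting on the $\C[d]$-module structure behave like a translation operator on polynomials in $d$, and a Vandermonde-type nondegeneracy argument then isolates each graded piece $m_j$; the nonvanishing hypothesis $T(k_0)\ne 0$ on $M$ is exactly the input that prevents this translation from being trivial.
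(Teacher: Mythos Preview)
Your approach is correct and matches the paper's proof: both exploit that $T(k_0)$ acts as a nonzero scalar $c_{k_0}$ on $M$ while $[d,T(k_0)]=k_0\,T(k_0)$ (note the sign---with the paper's convention $[d,x(n)]=n\,x(n)$ one gets $+k_0$, not $-k_0$, though this is immaterial since $k_0\ne 0$), so that applying $T(k_0)-c_{k_0}$ strictly lowers the $d$--degree and eventually lands any nonzero $v\in N$ in $1\otimes M$. The paper compresses your descent into the single observation $0\ne (T(k_0)-c_{k_0})^{i}v\in 1\otimes M$ for a suitable $i\ge 0$, after which irreducibility of $M$ finishes the argument exactly as you describe.
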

\begin{proof}  Since $N$ is simple, $T(k_0)$ acts on $N$ as a non-zero scalar $c_{k_0}$.
For any $0\ne v\in {\rm Ind}_{\widehat{\sl_2}}^{\widetilde{\sl_2}} N$,  we have $$0\ne (T(k_0)-c_{k_0}  ) ^{i} v \in 1\otimes N $$ for some $i\in \Z_{\ge 0}$. Combining with the simplicity of $N $, we have $U(\widetilde{\sl_2})v={\rm Ind}_{\widehat{\sl_2}}^{\widetilde{\sl_2}} N$. So  $$ {\rm Ind}_{\widehat{\sl_2}}^{\widetilde{\sl_2}} N$$ is simple as $\widetilde{\sl_2}$ module.
\end{proof}

\begin{remark}
Theorem \ref{thm-critical-novi} can be applied on simple modules constructed in \cite{A-2007} and \cite{A-2013}. In this way we get a new family of irreducible ${\widetilde{\sl_2}}$--modules.
\end{remark}

We shall first classify all simple quotients of the universal Whittaker modules at the critical level in the  non-degenerate case.

\begin{theorem}\label{thm-critical}Let $\lambda, \mu\in \C^*$.
 Then any simple quotient of $V_{\widetilde{\sl_2}}(\lambda,\mu,-2)$ is isomorphic to $$ {\rm Ind}_{\widehat{\sl_2}}^{\widetilde{\sl_2}} \Big(V_{\widehat{\sl_2}}(\lambda,\mu,-2, c(z))\Big)$$ for some $c(z)=\sum_{n\le 0} c_n z^{-n-2}$.
\end{theorem}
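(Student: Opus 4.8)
The plan is to reduce the classification of simple quotients of $V_{\widetilde{\sl_2}}(\lambda,\mu,-2)$ to the already-established $\widehat{\sl_2}$ results by exploiting the central elements $T(n)$. First I would observe that since the $T(n)$ are central in $U(\widehat{\sl_2})$ and commute with the whole of $\widetilde{\sl_2}$ as well (the bracket $[T(n),x(m)]=0$ holds for all $x$, and $T(n)$ is $d$-homogeneous so $[d,T(n)]$ is again a multiple of a $T$-operator, hence central), any simple quotient $\bar V$ of $V_{\widetilde{\sl_2}}(\lambda,\mu,-2)$ has each $T(n)$ acting as a scalar by a Schur-type argument. Writing these scalars as the coefficients of a series $c(z)=\sum_{n\le 0}c_n z^{-n-2}\in\C((z))$ (the nonnegative-mode structure being fixed by the Whittaker relations, where in particular $T(z)=\lambda\mu z^{-3}+c(z)$ as in Theorem 3.14(c)), I would identify $\bar V$ as a quotient of the specialized universal module $V_{\widetilde{\sl_2}}(\lambda,\mu,-2,c(z))$ obtained by killing $T(n)-c_n$ for $n\le 0$.

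Next I would restrict attention to $\widehat{\sl_2}$. The quotient of $V_{\widehat{\sl_2}}(\lambda,\mu,-2)$ by $\langle T(n)-c_n\mid n\le 0\rangle$ is precisely $V_{\widehat{\sl_2}}(\lambda,\mu,-2,c(z))$, which by Theorem 3.14(a) is a simple $\widehat{\sl_2}$--module. Since $\lambda\mu\ne 0$, the mode $T(k_0)$ for $k_0=-1$ (or any $k_0\ne 0$ with $c_{k_0}\ne 0$; note $\lambda\mu\ne 0$ forces $T$ to be non-constant in the relevant sense) acts as a nonzero scalar on this simple module. This is exactly the hypothesis of Theorem 4.3, so the induced module ${\rm Ind}_{\widehat{\sl_2}}^{\widetilde{\sl_2}}\big(V_{\widehat{\sl_2}}(\lambda,\mu,-2,c(z))\big)$ is a simple $\widetilde{\sl_2}$--module.

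It then remains to match $\bar V$ with one of these induced modules. I would argue that the simple $\widehat{\sl_2}$--submodule (or appropriate cyclic $\widehat{\sl_2}$--piece) generated by the Whittaker vector inside $\bar V$ must be a simple quotient of $V_{\widehat{\sl_2}}(\lambda,\mu,-2)$, hence by Theorem 3.14(b) isomorphic to some $V_{\widehat{\sl_2}}(\lambda,\mu,-2,c(z))$; by the universal property of induction and Frobenius reciprocity, $\bar V$ is then a quotient of the corresponding induced module, and since the latter is already simple by Theorem 4.3, the two must coincide. The main obstacle I expect is the bookkeeping at this last step: verifying that $d$ acts freely enough that the induced module is genuinely the universal target, i.e. controlling how $d$ interacts with the Whittaker vector and confirming that no further collapse occurs before invoking simplicity. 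Once the $T(k_0)\ne 0$ hypothesis is secured (which follows from $\lambda\mu\ne 0$ via $T(z)=\lambda\mu z^{-3}+c(z)$), the irreducibility from Theorem 4.3 closes the argument cleanly.
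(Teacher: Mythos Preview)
Your overall strategy is right, but the first step has a genuine error. You claim the $T(n)$ commute with all of $\widetilde{\sl_2}$, and in particular with $d$, so that Schur's lemma forces each $T(n)$ to act as a scalar on a simple quotient. This is false: $[d,T(n)] = nT(n)$, which is nonzero for $n\ne 0$. Your parenthetical actually computes this correctly (``$[d,T(n)]$ is again a multiple of a $T$-operator'') but then draws the wrong conclusion from it; knowing the commutator is central does not mean it vanishes. So you cannot apply Schur directly to $T(n)$.

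The paper fixes exactly this point. Since $[d,T(1)^{i}T(-i)] = (i\cdot 1 + 1\cdot(-i))\,T(1)^{i}T(-i) = 0$, the products $T(1)^{i}T(-i)$ genuinely commute with all of $\widetilde{\sl_2}$, hence act as scalars on any simple quotient. Now $T(1)$ already acts as the scalar $\lambda\mu\ne 0$ on the Whittaker vector (from $T(z)=\lambda\mu z^{-3}+c(z)$), so one solves for the scalar action of each $T(-i)$. This is also why the correct choice for the hypothesis of Theorem~\ref{thm-critical-novi} is $k_0=1$, not $k_0=-1$ as you first wrote: nothing prevents all the $c_n$ with $n\le 0$ from being zero, but $T(1)=\lambda\mu$ is always nonzero. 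Once the scalars $c_n$ are in hand, the paper identifies the simple quotient directly as a quotient of $V_{\widetilde{\sl_2}}(\lambda,\mu,-2)/\sum_{i\le 0}U(\widetilde{\sl_2})(T(i)-c_i)w \cong {\rm Ind}_{\widehat{\sl_2}}^{\widetilde{\sl_2}}V_{\widehat{\sl_2}}(\lambda,\mu,-2,c(z))$, which is simple by Theorem~\ref{thm-critical-novi}; this avoids the Frobenius-reciprocity bookkeeping you flagged as an obstacle.
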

\begin{proof}      Note that $T(1)$ acts on $V_{\widehat{\sl_2}}(\lambda,\mu,-2)$ as scalar $\lambda\mu\ne 0$.
Irreducibility of the induced module $ {\rm Ind}_{\widehat{\sl_2}}^{\widetilde{\sl_2}} \Big(V_{\widehat{\sl_2}}(\lambda,\mu,-2, c(z))\Big)$ follows from Theorem \ref{thm-critical-novi}.

Now let $W$ be any maximal submodule of $V_{\widetilde{\sl_2}}(\lambda,\mu,-2)$. Note that $T(1)^iT(-i)$, $i\in \Z_{\ge 0}$ commute with $\widetilde{\sl_2}$ as operators on $V_{\widetilde{\sl_2}}(\lambda,\mu,-2)/W$. So for any $i\in \Z_{\ge 0}$, $T(1)^iT(-i)$ act on $V_{\widetilde{\sl_2}}(\lambda,\mu,-2)/W$ as scalars.  Therefore for any $i\in \Z_{\ge  0}$, we have $(T(i)-c_i)w_{\lambda,\mu,-2}\in W$ for some $c_i\in \C$. Now $V$ is a quotient of  $$V_{\widetilde{\sl_2}}(\lambda,\mu,-2)\Big/(\sum_{i\in \Z_{\le 0}}U( \widetilde{\sl_2})(T(i)-c_i)w_{\lambda,\mu,-2})\cong {\rm Ind}_{\widehat{\sl_2}}^{\widetilde{\sl_2}} V_{\widehat{\sl_2}}(\lambda,\mu,-2, c(z)).$$ Thus $V\cong {\rm Ind}_{\widehat{\sl_2}}^{\widetilde{\sl_2}} \Big(V_{\widehat{\sl_2}}(\lambda,\mu,-2, c(z))\Big)$.
\end{proof}

\begin{remark}  \label{remark-proof} Now we have proved all the results in Theorem 1.1. More precisely, Theorem 1.1 (i), (ii), (i') and (ii') follow from Theorem \ref{ired-non-crit} (1), Theorem \ref{ired-crit} (a), Lemma \ref{lemma3.8} and Theorem \ref{thm-critical-novi} respectively. The last statement in Theorem 1.1 follows from Theorem \ref{ired-non-crit} (2), Theorem  \ref{ired-crit} , Lemma \ref{lemma3.8} and Theorem \ref{thm-critical}.\end{remark}

The classification of simple Whittaker modules in the degenerate case at critical level is more complicated. We shall study graded representations of the graded center of $V_{-2}(\sl_2)$. This leads to the study of the infinite-dimensional Lie algebra  $\HH$ from Section   \ref{def-borel-virasoro-critical}.

Next we notice that the universal Whittaker module $V_{ \widetilde{\sl_2}} (\lambda, 0, -2)$ has the structure of right $\HH_{-}$--module (since the ideal $\widetilde{J}(\lambda, 0, -2)$ is $d$--invariant).

Denote by $V_{\widetilde{\sl_2}}(\lambda,0,-2,{\bf 0})$ the quotient of $V_{\widetilde{\sl_2}}(\lambda,0,-2)$ by the submodule generated by $\{dw_{\lambda,0,-2}, T(n)w_{\lambda,0,-2}|n\in \Z\}$.
So $V_{\widetilde{\sl_2}}(\lambda,0,-2,{\bf 0})$ is a   $\hat{\frak b} \rtimes\HH$ -module  on which elements $T(n)$ act trivially.

Recall that $U(\hat{\frak b} \rtimes\HH)$ is the universal enveloping algebra of $\hat{\frak b} \rtimes\HH$. Any $\HH$-module $X$ can endowed with a $\hat{\frak b} \rtimes\HH$-module structure by $\hat{\frak b}X=0$. The resulting $\hat{\frak b} \rtimes\HH$-module will be denoted by $X^{\hat{\frak b} \rtimes\HH}$.

\begin{theorem} \label{classification-quotients}Assume that $\lambda \ne 0$.
 There is one to one correspondence between the equivalence classes of irreducible $\HH_{-}$--modules and simple quotients of $V_{\widetilde{\sl_2}}(\lambda,0,-2)$.
 In particular, as a $\hat{\frak b} \rtimes\HH_{-}$--module, every simple quotient of $V_{\widetilde{\sl_2}}(\lambda,0,-2)$  is isomorphic to the module
 \bea \label{module-1}  X^{\hat{\frak b} \rtimes\HH}  \otimes V_{\widetilde{\sl_2}}(\lambda,0,-2,{\bf 0})\eea
 where $X$ is a simple $\HH_{-}$--module.
 \end{theorem}
Proof of this theorem will be presented in Section \ref{realization-d} where we shall explicitly construct the $\widetilde{\sl_2}$--action on  the $\hat{\frak b} \rtimes\HH_{-}$--module (\ref{module-1}).

Now we shall describe all simple ${\HH}_-$--modules.

 Let ${\frak s}$ be any nonempty subset of $\Z_{<0}$ and $r_{\frak s}\Z$ ($r_{\frak s}>0$) be the additive subgroup of $\Z$ generated by ${\frak s}$. For any map $\chi_{\frak s}: {\frak s}\cup \{0\}\rightarrow \C^*$, we define an associative algebra homomorphism $\phi_{\chi_{\frak s}}:U(\HH)\rightarrow U(\bb)$ by

 $$\phi_{\chi_{\frak s}}(d)=-\frac{r_{\frak s}}{2} h, \phi_{\chi_S}(T(i))=0,\forall i\not\in {\frak s}\cup \{0\}, $$

 $$\phi_{\chi_{\frak s}}(T(i))=\chi_{\frak s}(i)e^{-i/r_{\frak s}},\forall i\in {\frak s}\cup \{0\}.$$

 Then for any $\phi_{\chi_{\frak s}}$ and $\bb$ module $N$, we have a $\HH_{-}$ module $N^{\phi_{\chi_{\frak s}}}=N$ with the action $ xv=\phi_{\chi_{\frak s}}(x)v,\forall x\in {\HH}_{-},v\in N$.
 Recall that all simple modules over $\bb$ are classified in \cite{B}.
\begin{lemma}$N^{\phi_{\chi_{\frak s}}}$ is a simple ${\HH}_{-}$ module if and only if $N$ is a simple $\bb$ module.\end{lemma}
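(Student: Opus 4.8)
The plan is to translate the question entirely into the structure of the subalgebra $A:=\phi_{\chi_{\frak s}}(U(\HH_{-}))\subseteq U(\bb)$. First I would record the routine facts: $\phi_{\chi_{\frak s}}$ is a Lie algebra homomorphism (the only checks are $[\phi(d),\phi(T(i))]=i\,\phi(T(i))$, which holds because $[h,e^{k}]=2k\,e^{k}$ and $\phi(d)=-\tfrac{r_{\frak s}}{2}h$, and $[\phi(T(i)),\phi(T(j))]=0$ since all $\phi(T(i))$ are powers of $e$), so $M^{\phi_{\chi_{\frak s}}}$ is a genuine $\HH_{-}$-module and its $\HH_{-}$-submodules are exactly the $A$-stable subspaces of $M$. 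Here $\phi(T(0))$ is a nonzero scalar, $\phi(d)$ is a scalar multiple of $h$, and $\phi(T(i))=\chi_{\frak s}(i)e^{-i/r_{\frak s}}$ for $i\in{\frak s}$; writing $S:=\{-i/r_{\frak s}:i\in{\frak s}\}\subseteq\Z_{>0}$ we get $A=\langle h,\ e^{k}\ (k\in S)\rangle=\bigoplus_{m\in\langle S\rangle}\C[h]e^{m}$, where $\langle S\rangle$ is the numerical monoid generated by $S$. By construction $\gcd S=1$, so $\langle S\rangle$ contains all sufficiently large integers.

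The easy direction is immediate: since $A\subseteq U(\bb)$, every $\bb$-submodule of $M$ is $A$-stable, hence if $M^{\phi_{\chi_{\frak s}}}$ is a simple $\HH_{-}$-module (no proper nonzero $A$-stable subspace) then $M$ is a simple $\bb$-module.

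For the converse I would assume $M$ is $\bb$-simple and show it has no proper nonzero $A$-submodule $N$. A standard dichotomy comes first: in a simple $\bb$-module $\ker e$ and $eM$ are $\bb$-submodules, so either $e=0$ (then $M$ is one-dimensional and the claim is trivial) or $e$ acts bijectively. In the bijective case $M$ can have no $h$-eigenvector, for an eigenvector $v$ of eigenvalue $a$ produces the proper nonzero $e$- and $h$-stable subspace $\mathrm{span}\{e^{n}v:n\ge 1\}$ (the $e^{n}v$ are eigenvectors of distinct eigenvalue $a+2n$); hence $M$ is torsion-free over $\C[h]$. Now I would invoke Block's classification \cite{B} to realize such a simple module concretely as a rank-one torsion-free $\C[h]$-module, i.e.\ a $\C[h]$-submodule $M\subseteq\C(h)$ on which $e$ acts as $v\mapsto u\,\sigma(v)$ with $\sigma:h\mapsto h-2$ and $u$ a unit (this matches $eh=(h-2)e$ and $e$ bijective). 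Then any nonzero $A$-submodule $N$ is in particular a nonzero $\C[h]$-submodule of $\C(h)$, i.e.\ a fractional ideal, which is stable under each $e^{k}$, that is under the shift $\sigma^{k}$ up to a unit. Comparing zeros and poles, $\sigma^{k}I\subseteq I$ for a nonzero fractional ideal $I$ with a nonzero finite divisor is impossible, because a nonempty finite zero/pole set cannot be invariant under the nontrivial shift by $2k$; hence $I$ has trivial divisor and $N=M$.

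The main obstacle is precisely this hard direction, and within it the need to pin down the $\C[h]$-module structure of the infinite-dimensional simple $\bb$-modules so that $e^{k}$-invariance can be read off as shift-invariance of a fractional ideal; this is exactly where \cite{B} is used. A structural (classification-free) substitute that I would also carry in reserve is to pass to $\mathcal M=\C(h)\otimes_{\C[h]}M$ and $\mathcal N=\C(h)\otimes_{\C[h]}N$: if $\mathcal M$ is finite-dimensional over $\C(h)$, the inclusions $e^{k}\mathcal N\subseteq\mathcal N$ become equalities by dimension count, so $\mathcal N$ is stable under the group generated by the $e^{k}$, which contains $e$ itself since $\gcd S=1$; then $\mathcal N\cap M$ is a nonzero $\bb$-submodule, forcing $M\subseteq\C(h)N$, after which one must still upgrade ``$M/N$ is $\C[h]$-torsion'' to $N=M$. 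Controlling the rank (finite-dimensionality of $\mathcal M$) and closing this last torsion step is the delicate point, and it is resolved cleanly by the explicit rank-one description above, which also shows that a single $e^{k}$ already suffices.
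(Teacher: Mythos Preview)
Your argument is correct in outline, but it takes a substantially more elaborate route than the paper's, and the fractional-ideal step is not quite complete as written.

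Both proofs agree on the setup: one identifies $A:=\phi_{\chi_{\frak s}}(U(\HH_-))$ with the subalgebra of $U(\bb)$ generated by $h$ and $\{e^k: k\in\langle S\rangle\}$, notes that $\gcd S=1$ forces $e^k\in A$ for all $k\ge m$ (some $m$), handles the easy direction and the $1$-dimensional case, and observes that in an infinite-dimensional simple $\bb$-module $e$ acts bijectively. At this point the paper finishes in one line: for $0\ne v_1,v_2\in M$, simplicity of $M$ gives $v_2=u\,(e^{m}v_1)$ for some $u\in U(\bb)=\C[h,e]$; but every monomial $h^a e^b\cdot e^{m}=h^a e^{b+m}$ lies in $A$ since $b+m\ge m$, so $u\,e^{m}\in A$ and $v_2\in A v_1$. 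No classification, no $h$-eigenvector analysis, no fractional ideals.

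Your detour through Block's classification can be made rigorous, but as stated it has a gap. From $e^k N\subseteq N$ alone you cannot conclude that the divisor of $N$ is trivial, because the action of $e^k$ is $v\mapsto u_k\sigma^k(v)$ with $u_k=u\,\sigma(u)\cdots\sigma^{k-1}(u)$ not a constant in general; and even if you did conclude that $N$ has trivial divisor, this gives $N=\C[h]$, not $N=M$, unless you have first normalized $M=\C[h]$. The clean fix is to use that $e^k$ is \emph{bijective} on $M$: writing $N=pM$ with $p\in\C[h]$, the inclusion $e^k N\subseteq N$ together with $e^k M=M$ yields $\sigma^k(p)M\subseteq pM$, hence $p\mid\sigma^k(p)$ in $\C[h]$; equal degrees then force $\sigma^k(p)=c p$, and since the root set of $p$ would have to be invariant under the shift $h\mapsto h-2k$, $p$ is constant and $N=M$. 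This works, but the paper's argument obtains the same conclusion without ever invoking \cite{B}.
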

\begin{proof} The necessity is obvious. Now suppose that $N$ is a simple $\bb$ module. If $N$ is finite dimension, then $N$ is 1-dimensional and the claim follows. Now we assume that  $N$ is  an infinite dimensional simple $\bb$ module. Since $eN$ and $\{v\in N|ev=0\}$ are submodules of $N$, we have $e$ acts bijectively on $N$. From the definition it is easy to see that there exists a $m\in \Z_{>0}$ such that $\C[e^{m+i}, h|i\ge 0\}\subset \phi_{\chi_S}(\HH)$. So we only need to prove that $N$ is also simple as  $\C[e^{m+i}, h|i\ge 0]$ module. For any $0\ne v_1, v_2\in N$, we have $e^m v_1\ne 0$ and $v_2=ue^m v_1\in \C[e^{m+i}, h|i\ge 0]v_1$ for some $u\in \C[e,h]$, which completes the proof. \end{proof}

\begin{lemma}\label{b-module}Let $\lambda\ne0$, and $X$ be any simple $\HH$ quotient of $U(\HH)w_{\lambda,0,-2}$. Then  one of the following holds:
\begin{itemize}
\item[(1)] $X$ is 1-dimensional.
\item[(2)] $X\cong N^{\phi_{\chi_{\frak s}}}$ for some ${\frak s}\subseteq \Z_{<0}$ and infinite dimensional simple $\bb$-module $N$.
\end{itemize}
\end{lemma}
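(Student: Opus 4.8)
The plan is to reduce the statement to a classification of simple $\HH_{-}$--modules and then to realize such a module inside $U(\bb)$ through the homomorphism $\phi_{\chi_{\frak s}}$. Using the identification $U(\HH)w_{\lambda,0,-2}\cong U(\HH_{-})$ recorded above, a simple $\HH$--quotient $X$ of $U(\HH)w_{\lambda,0,-2}$ is the same data as a simple $\HH_{-}$--module, and $X$ has at most countable dimension over $\C$. Hence a version of Schur's lemma applies: any operator on $X$ commuting with the whole action of $\HH_{-}$ is a scalar; in particular the central element $T(0)$ acts by a scalar. I would then set ${\frak s}=\{n\in\Z_{<0}\mid T(n)\neq 0 \text{ on } X\}$ and split into two cases. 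If ${\frak s}=\emptyset$, every $T(n)$ with $n<0$ acts by $0$, so $\HH_{-}$ acts through the abelian quotient $\C d\oplus\C T(0)$ and simplicity forces $\dim X=1$, giving alternative (1).

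Assume now ${\frak s}\neq\emptyset$. The first key step is to prove that $T(n)$ acts invertibly on $X$ for every $n\in{\frak s}$. For this I would check that both $\ker T(n)$ and the image $T(n)X$ are $\HH_{-}$--submodules: stability under the $T(m)$ is immediate since the $T$'s commute, and stability under $d$ follows from $[d,T(n)]=nT(n)$ (for instance $T(n)v=0$ gives $T(n)dv=dT(n)v-nT(n)v=0$). Since $T(n)\neq 0$ and $X$ is simple, $\ker T(n)=0$ and $T(n)X=X$, so $T(n)$ is bijective.

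With invertibility in hand I would construct the $\bb$--action. Let $r=r_{\frak s}$ be the positive generator of the subgroup of $\Z$ generated by ${\frak s}$, so $\gcd\{-n/r\mid n\in{\frak s}\}=1$. By B\'ezout there are finitely many $n_1,\dots,n_p\in{\frak s}$ and integers $a_1,\dots,a_p$ with $\sum_k a_k(-n_k/r)=1$, equivalently $\sum_k a_k n_k=-r$. Define $e=\prod_k T(n_k)^{a_k}$, which is invertible, and $h=-\tfrac{2}{r}d$. Since $\mathrm{ad}(d)$ is a derivation with $T(n_k)$ an eigenvector of eigenvalue $n_k$, one gets $[d,e]=(\sum_k a_k n_k)e=-re$ and hence $[h,e]=2e$, so $\C h+\C e$ acts on $X$ as a copy of $\bb$; call the resulting $\bb$--module $M$. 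To see that the $\HH_{-}$--action factors through $\phi_{\chi_{\frak s}}$, I would, for each $j\in{\frak s}\cup\{0\}$, consider $F_j=T(j)e^{j/r}$ (well defined since $-j/r\in\Z_{\ge0}$ and $e$ is invertible) and verify $[T(m),F_j]=0$ and $[d,F_j]=jF_j-jF_j=0$. Thus $F_j$ commutes with all of $\HH_{-}$ and, by Schur, equals a scalar $\chi_{\frak s}(j)$, giving $T(j)=\chi_{\frak s}(j)e^{-j/r}$, with $\chi_{\frak s}(j)\neq0$ for $j\in{\frak s}$ by invertibility, while $T(j)=0$ for the remaining $j<0$. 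Combined with $d=-\tfrac r2 h$, these are exactly the relations defining $\phi_{\chi_{\frak s}}$, so $X\cong M^{\phi_{\chi_{\frak s}}}$ as $\HH_{-}$--modules.

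Finally I would invoke the preceding lemma: since $X\cong M^{\phi_{\chi_{\frak s}}}$ is simple, $M$ is a simple $\bb$--module. It remains to rule out $\dim M<\infty$: a finite--dimensional simple $\bb$--module is one--dimensional (as $\bb$ is solvable), forcing $e$ and hence every $T(j)$ to be a scalar, so $X$ would be one--dimensional; but on a one--dimensional module $[d,T(n)]=nT(n)$ forces $T(n)=0$ for all $n\neq0$, contradicting ${\frak s}\neq\emptyset$. Hence $M$ is infinite--dimensional, which is alternative (2). The main obstacle is this rigidity step: producing the single generator $e$ (which requires the invertibility of the $T(n)$, $n\in{\frak s}$, together with the B\'ezout combination) and then using Schur's lemma to force every $T(j)$ to be the coherent power $\chi_{\frak s}(j)e^{-j/r}$ of that one element; everything else is bookkeeping with the grading derivation $\mathrm{ad}(d)$.
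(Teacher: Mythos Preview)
Your argument is correct and follows essentially the same route as the paper's proof: both identify the set ${\frak s}$ of negative indices on which $T(n)$ acts nonzero, use the kernel/image submodule argument to get bijectivity, build a degree $-r_{\frak s}$ monomial in the (inverses of) $T(n)$'s via a B\'ezout combination, and then use that the operators $T(j)\cdot(\text{monomial})^{j/r_{\frak s}}$ commute with everything to force them to be scalars, yielding the ${\frak b}$--module structure through $\phi_{\chi_{\frak s}}$. Your explicit invocation of Schur's lemma (via countable dimension over $\C$) plays the role of the paper's passage to the center of the localized algebra $A=\C[T(i),T(i)^{-1},d\mid i\in{\frak s}]$, and you add a clean final paragraph ruling out the finite--dimensional case, which the paper leaves implicit.
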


\begin{proof}  For any $i\in \Z$, since both $T(i) X$ and  $\{v\in  X| T(i)v=0\}$ are submodules of the simple module $X$, we have $T(i)$ acts on $X$ either bijectively or as zero. Note that $T(0)$ is the central element and $T(i) X=0$ for all $i>0$.  Let ${\frak s}=\{i\in \Z_{<0}| T(i)X\ne 0\}$.  If ${\frak s}=\emptyset$, then $\dim X=1$. So we assume that ${\frak s}\ne \emptyset$. Then $X$ is also a simple $A=\C[T(i),T(i)^{-1}, d| i\in {\frak s}]$ module. Fix an $x=\Pi_{i_k\in {\frak s}_x\subset {\frak s}}T(i_k)^{j_k}\in A$ with $\sum i_kj_k=-r_{\frak s}$, Then for any $i\in {\frak s}$, we have  $T(i)x^{i/r_{\frak s}}\in Z(A)$, which acts on $X$ as a nonzero scalar $a_i$. Now $X$ is a simple module over $A/\langle T(i)x^{i/r_{\frak s}}-a_i|i\in {\frak s}\rangle$. It is clear that we have a homomorphism $$\phi_{\chi_{\frak s}}: A\rightarrow \C[h,e,e^{-1}]$$ $$\phi_{\chi_{\frak s}}(T(i))=a_ie^{-i/r_{\frak s}},\,\,\,\phi_{\chi_{\frak s}}(d)=-\frac{r_{\frak s}}{2} h.$$ Now it is easy to check that $\ker \phi_{\chi_{\frak s}}=\langle T(i)x^{i/r_{\frak s}}-a_i|i\in {\frak s}\rangle$ and we have the induced isomorphism $\bar{\phi}_{\chi_{\frak s}}:A/\langle T(i)x^{i/r_{\frak s}}-a_i|i\in{\frak s}\rangle\rightarrow \C[e,e^{-1},h]$. Now $X$ can be regarded as $\C[e,e^{-1},h]$ module via the isomorphism. Then from definition of $\bb$ structure on $X$,  $X$ is simple as $\phi_{\chi_{\frak s}}(\HH)=\C[e^i,h|-ir_{\frak s}\in {\frak s}]$ module. So $X$ is a simple $\C[e,h]$ module, which gives (2).  \end{proof}

Now we may summarize the main results in this section

\begin{theorem}\label{thm-critical-case2}Let $\lambda\in \C^ *$.
 \begin{itemize}\item[(1)] Any simple quotient $W$  of $V_{\widetilde{\sl_2}}(\lambda,0,-2)$ is isomorphic to $$X^{\hat{\frak b} \rtimes\HH} \otimes V_{\widetilde{\sl_2}}(\lambda,0,-2,{\bf 0}),$$ where $X$ is a simple  $\HH$--module determined in Lemma \ref{b-module}.
     \item[(2)] If $\mu\in \C^*$, then any simple quotient of $V_{\widetilde{\sl_2}}(\lambda,\mu,-2)$ is isomorphic to $$ {\rm Ind}_{\widehat{\sl_2}}^{\widetilde{\sl_2}} \Big(V_{\widehat{\sl_2}}(\lambda,\mu,-2, c(z))\Big)$$ for some $c(z)=\sum_{n\le 0} c_n z^{-n-2}$.\end{itemize}
\end{theorem}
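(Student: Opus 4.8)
The plan is to assemble this summary directly from the structural results already in place, since both parts are repackagings of earlier theorems. Part (2) requires nothing new: its hypotheses $\lambda,\mu\in\C^*$ coincide with those of Theorem \ref{thm-critical}, and its conclusion is word for word that theorem, so I would simply cite Theorem \ref{thm-critical}. All the work is in part (1), where I would classify the simple $\widetilde{\sl_2}$-quotients of $V_{\widetilde{\sl_2}}(\lambda,0,-2)$ by transferring the question to the auxiliary Lie algebra $\hat{\frak b}\rtimes\HH$ and then invoking Lemma \ref{lemma4.2} and Lemma \ref{b-module}.

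The crux is to show that the lattice of $\widetilde{\sl_2}$-submodules of $V_{\widetilde{\sl_2}}(\lambda,0,-2)$ coincides with its lattice of $\hat{\frak b}\rtimes\HH$-submodules; once this is established, simple quotients over the two algebras agree, and the classification of the latter is already available. One inclusion is formal: the generators $e(n),h(n)$ of $\hat{\frak b}$ lie in $\widehat{\sl_2}$, the element $d$ lies in $\widetilde{\sl_2}$, and each central $T(n)$ is a quadratic element of $U(\widehat{\sl_2})$, so every $\widetilde{\sl_2}$-submodule is automatically stable under $\hat{\frak b}\rtimes\HH$. The reverse inclusion is exactly the content of Lemma \ref{lemma4.2}: part (2) writes any $\hat{\frak b}\rtimes\HH$-submodule as $U(\hat{\frak b})X$ for an $\HH$-submodule $X$ of $U(\HH)w_{\lambda,0,-2}$, while the proof of part (4) shows $U(\hat{\frak b})X=U(\widetilde{\sl_2})X$, so such a submodule is in fact $\widetilde{\sl_2}$-invariant. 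Hence the two submodule lattices agree, the correspondence $X\mapsto U(\hat{\frak b})X$ is order-preserving and bijective, and maximal submodules — equivalently, simple quotients — match up on both sides.

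With the lattices identified, the conclusion follows: a simple $\widetilde{\sl_2}$-quotient $W$ corresponds to a maximal $\hat{\frak b}\rtimes\HH$-submodule, hence by Lemma \ref{lemma4.2}(3) takes the form $V_{\widetilde{\sl_2}}(\lambda,0,-2,{\bf 0})\otimes X^{\hat{\frak b}\rtimes\HH}$ for a simple $\HH$-quotient $X$ of $U(\HH)w_{\lambda,0,-2}$, and conversely each such $X$ yields a simple $\widetilde{\sl_2}$-quotient by Lemma \ref{lemma4.2}(4); Lemma \ref{b-module} then supplies the explicit alternatives for $X$ (one-dimensional, or $M^{\phi_{\chi_{\frak s}}}$ for an infinite-dimensional simple $\bb$-module $M$). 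I expect the only genuine subtlety to be the reverse inclusion in the lattice identification, where one must check that a $\hat{\frak b}\rtimes\HH$-submodule is already closed under the full $f(n)$-action of $\widetilde{\sl_2}$; this rests on the fact that every vector of such a submodule is a Whittaker vector of type $(\lambda,0)$ and that, by the basis in Lemma \ref{lemma5-1}(a), the operators $f(n)$ act on Whittaker vectors through $U(\hat{\frak b}\rtimes\HH)$. Everything else is bookkeeping.
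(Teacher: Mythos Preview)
Your proposal is correct and matches the paper's approach: the theorem is stated there as a summary of Lemma \ref{lemma4.2}, Lemma \ref{b-module}, and Theorem \ref{thm-critical}, and you have unpacked exactly this. One minor imprecision: in your last paragraph it is not that ``every vector of such a submodule is a Whittaker vector,'' but rather that every vector of the $\HH$-submodule $X\subset U(\HH)w_{\lambda,0,-2}$ is a Whittaker vector; since $X$ generates $U(\hat{\frak b})X$ over $\widetilde{\sl_2}$, this suffices for the argument in Lemma \ref{lemma4.2}(4), as you correctly cite.
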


\begin{remark} \label{infinite-whittaker} When $X$ is $1$-dimensional, we know that the action of $d$ is  semisimple on $X^{\hat{\frak b} \rtimes\HH} \otimes V_{\widetilde{\sl_2}}(\lambda,0,-2,{\bf 0}).$ If the action of $d$ is free on $X$, then the action of $d$ is  free on $X^{\hat{\frak b} \rtimes\HH} \otimes V_{\widetilde{\sl_2}}(\lambda,0,-2,{\bf 0})$. This makes a striking difference from the
case of noncritical central charge. In particular, the vector space of all Whittaker
vectors in $X^{\hat{\frak b} \rtimes\HH} \otimes V_{\widetilde{\sl_2}}(\lambda,0,-2,{\bf 0})$ is $X\otimes w_{\lambda,0,-2}$ which can be infinite dimensional.  This shows that the converse of   Lemma  \ref{lemma2.1} (iii) is not true in this case.
 \end{remark}

\begin{remark}
The result for $\lambda=0$ and $\kappa=-2$ in Theorem \ref{thm-critical-case2} may be obtained similarly as  Theorem \ref{ired-non-crit} (3) and (4). We omit the details.\end{remark}

\section{Wakimoto  modules for $\widehat{\sl_2}$}

\label{section-Wakimoto-1}

 In this section we shall review the construction of Wakimoto modules for $\widehat{\sl_2}$ (cf. \cite{W-mod}). Details on the construction of Wakimoto modules using concepts of vertex algebras can be found in \cite{efren}.
 %

Let ${\frak h} = {\C} b $ be the $1$--dimensional commutative   Lie algebra with a symmetric bilinear form defined by $(b,b) =2$, and  $\widehat{\frak h} = h \otimes {\C}[t,t ^{-1}] + {\C} c$ be its affinization. Set $b(n) = b \otimes t ^n $.  Let $\pi^{\kappa+2}$ denote the simple $\widehat{\frak h}$--module of level $\kappa+2$ generated by the  vector ${\bf 1}$ such that
 $$ b(n) {\bf 1} = 0 \quad \forall n \ge 0. $$
 As a vector space
$$\pi^{\kappa+2}  = {\C}[b(n) \,|\,  n \le -1 ], $$

Then $\pi^{\kappa+2}$ has the unique structure of a  vertex algebra  generated  by the field $b(z) = \sum_{n \in {\Z}} b (n) z ^{-n-1} $ such that
$$[b (n), b(m) ] = 2 (\kappa+2) n \delta_{n+m,0} . $$
\vskip 5mm

Let $V_{\kappa}(\sl_2)$ be the universal vertex algebra of level $\kappa$ associated to the affine Lie algebra $\widehat{\sl_2}$. Recall that $M$ is the Weyl vertex algebra from Section 2.4. There is a injective homomorphism of vertex algebras
$\Phi : V_{\kappa}(\sl_2) \rightarrow  M \otimes \pi ^{\kappa+2}$ generated by

\bea e &=& a(-1) {\bf 1}, \nonumber \\  h&=& - 2 a^{*} (0)  a(-1) {\bf 1} + b (-1), \nonumber \\ f &=& - a^* (0) ^2 a(-1) {\bf 1} + k a ^* (-1) {\bf 1} + a ^* (0) b(-1) {\bf 1}. \nonumber \eea

For $x \in \{ e, f, z \} $ we set
$x(z) = \sum_{n \in {\Z}} x(n) z ^{-n-1}. $
We have
\bea
e(z) = Y(e,z)  &=& a(z); \nonumber \\
h(z) = Y(h,z) & =& -2 : a^{*}(z)  a (z) : + b (z); \nonumber \\
              & = & - 2 \left( a(z) ^+ a^* (z)  +  a ^* (z)  a(z) ^- \right) + b(z)  \nonumber \\
f(z) = Y(f, z)  & = & - : a^{*} (z) ^2 a (z) : + k \partial_z a^{*} (z) + a^{*}(z) b (z) \nonumber \\
                &= & - \left( a(z) ^+ (a^* (z) ) ^2 + ( a ^* (z) ) ^2 a(z) ^-  \right) + k \partial_z a^{*} (z) + a^{*}(z) b (z) \nonumber
\eea
The following proposition is a standard result in the theory of vertex algebras (cf. \cite{K}, \cite{LL}, \cite{FB}) applied on the vertex algebra $M \otimes \pi ^{\kappa+2}$.

\begin{proposition} \label{constr}
Assume that $M_1$ is a restricted module for the Weyl algebra  and $N_1$ is a restricted module of level $\kappa+2$  for the Heisenberg algebra $\widehat{\frak h}$,\ i.e., for every $ u \in M_1$ and $v \in N_1$ there is
$N \in {\Zp}$ such that
$$ a(n) u = 0, \quad b(n) v = 0 \qquad \mbox{for} \ n \ge  N.$$
Then $M_1 \otimes N_1$ is a $M \otimes \pi ^{\kappa+2}$--module, and therefore a $V_{\kappa}(\sl_2)$--module.
\end{proposition}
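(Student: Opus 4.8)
The plan is to invoke the standard transfer-of-modules principle for vertex algebras: if $V$ is a vertex algebra and $W$ is a $V$--module, then any restricted module over the algebra of modes underlying $V$ carries a canonical $V$--module structure, and conversely a module over $V$ restricts to a module over these modes. Here the relevant vertex algebra is $M \otimes \pi^{\kappa+2}$, whose generating fields are $a(z)$, $a^*(z)$ and $b(z)$. The key structural fact I would use is that $M \otimes \pi^{\kappa+2}$ is generated by these fields, so a module structure is completely determined once the actions of the modes $a(n)$, $a^*(n)$, $b(n)$ are specified and shown to be compatible with the relevant normal-ordering and locality.

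First I would observe that the tensor product $M_1 \otimes N_1$ carries an obvious action of the Weyl algebra (through the first factor) and of the Heisenberg algebra $\widehat{\frak h}$ at level $\kappa+2$ (through the second factor), and that these two actions commute since they act on different tensor factors. Next I would verify that the restriction hypothesis passes to the tensor product: for $u \otimes v \in M_1 \otimes N_1$ and $n$ large we have $a(n)(u \otimes v) = (a(n)u)\otimes v = 0$ and similarly $b(n)(u \otimes v) = 0$, so $M_1 \otimes N_1$ is a restricted module for the combined Weyl--Heisenberg mode algebra. This restriction property is exactly what is needed so that the fields $a(z)$, $a^*(z)$, $b(z)$ act as well-defined formal distributions with only finitely many singular terms applied to any vector.

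The heart of the argument is to apply the general reconstruction/existence theorem for vertex-algebra modules (the version stated, for instance, in \cite{K}, \cite{LL}, \cite{FB}): given generating fields on $M_1 \otimes N_1$ that are mutually local and whose operator product expansions match those of the generators of $M \otimes \pi^{\kappa+2}$, the assignment extends uniquely to a module structure over the whole vertex algebra. Concretely, the fields $a(z)$ and $a^*(z)$ satisfy the operator product relations dictated by (\ref{comut-Weyl}), the field $b(z)$ satisfies the Heisenberg relations of level $\kappa+2$, and the two commute; these are precisely the defining relations of $M \otimes \pi^{\kappa+2}$. Therefore the restricted module structure extends to an $M \otimes \pi^{\kappa+2}$--module structure, and composition with the vertex-algebra homomorphism $\Phi : V_{\kappa}(\sl_2) \rightarrow M \otimes \pi^{\kappa+2}$ then makes $M_1 \otimes N_1$ into a $V_{\kappa}(\sl_2)$--module.

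The only real obstacle here is conceptual rather than computational: one must be careful that the general theorem genuinely applies, i.e.\ that locality and the restriction condition together guarantee the extension, and that no convergence issue arises when the normally ordered products such as $:a^*(z)^2 a(z):$ are applied to vectors of $M_1 \otimes N_1$. The restriction hypothesis is exactly what rules out infinite sums, so I expect this to reduce to a routine verification. Since the proposition is explicitly stated to be a standard result, I would present the proof as an application of the cited theorems, spending most of the writing on checking the restriction condition for the tensor product and invoking commutativity of the two factor actions, rather than re-deriving the reconstruction theorem itself.
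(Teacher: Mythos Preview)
Your proposal is correct and matches the paper's treatment exactly: the paper gives no proof at all, simply stating that this is a standard result in the theory of vertex algebras (with citations to \cite{K}, \cite{LL}, \cite{FB}) applied to $M \otimes \pi^{\kappa+2}$. Your sketch --- checking that the restriction hypothesis passes to the tensor product, invoking the reconstruction theorem for modules over a vertex algebra generated by mutually local fields, and then pulling back along $\Phi$ --- is precisely the content behind that citation, so you have supplied more detail than the paper does while following the intended route.
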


 Assume  that $\kappa \ne -2$.  We have the natural action of the Virasoro algebra generated by  the Sugawara Virasoro vector
\bea \omega &=& \frac{1}{2 (k+2) } (e(-1) f(-1) + f(-1) e(-1) + \frac{1}{2} h(-1) ^2 ) {\bf 1} \nonumber \\ &=&  a(-1) a ^* (-1) + \frac{1}{4  (k+2)} (b(-1) ^2 -2 b(-2) ) \nonumber \\
L(z)  &=&  Y(\omega, z) = \sum_{n \in {\Z} } L(n) z ^{-n-2}.  \nonumber \eea

Assume next  that $\kappa=-2$ (critical level).
The center of $V_{-2}(\sl_2)$ is generated by the field
$$ T(z) = Y( \frac{1}{2 }(e(-1) f(-1) + f(-1) e(-1) + 1/2 h(-1) ^2 ) {\bf 1}, z) = \sum_{n \in {\Z} } T(n) z ^{-n-2}. $$
In our case we have that
$$ T(z) = Y ( \frac{1}{2} ( b(-1) ^2 + 2  b(-2) ){\bf 1}, z) = \frac{1}{2}  (b(z) ^2- 2 \partial_z b(z) ). $$
For details see \cite{FB}.

\section{Whittaker  modules from Wakimoto modules}

\label{whitt-wak}
Because of  Theorem  \ref{ired-non-crit}(1) the structure of   simple  non--degenerate Whittaker modules at non--critical levels is very clear. In this case every  universal Whittaker module is a simple $\widehat{\sl_2}$--module and because of Lemma \ref{lemma3.8}  it is an irreducible Whittaker $\widetilde{\sl_2}$--module. So explicit realization is mostly interesting in the cases of critical level and for degenerate Whittaker modules at non--critical level. In this section we shall see which Whittaker modules can be constructed  by using Wakimoto realization.

 Proposition \ref{constr} gives a very useful method for a  construction of representations for affine Lie algebras of certain level which uses Wakimoto modules. So we just need to construct modules for the vertex algebra $M \otimes \pi ^{\kappa+2}$. Since $V_{\kappa}(\sl_2)$ is a subalgebra of $M \otimes \pi ^{\kappa+2}$,  every $M \otimes \pi ^{\kappa+2}$--module becomes a module for the vertex algebra $V_{\kappa}(\sl_2)$ and therefore for the affine Lie algebra $\widehat{\sl_2}$ of level $\kappa$.
In this section we shall use this fact and construct certain modules of Whittaker type for $\widehat{\sl_2}$. The main new idea in our approach will be in considering certain Whittaker modules for the Weyl algebra as modules for the vertex algebra $M$.

We shall first study a simple case of Whittaker modules for Weyl and Heisenberg algebras.
Let $\lambda, \mu \in {\C}$. Then there is a unique simple module $M_1(\lambda, \mu)$ for the Weyl algebra $W\hskip -2pt eyl$ generated by the vector $v_1$ such that
\bea
&&a(0) v _1= \lambda v_1, \ a(n) v_1 = 0  \quad \forall n \ge 1 \nonumber \\
&& a^{*}(1) v_1 = \mu v_1, \  a ^* (m) v_1 = 0 \quad \forall m \ge 2 \nonumber
\eea
Note that as a vector space $M_1(\lambda, \mu) \cong M$.

Since $M_1(\lambda, \mu)$ is a restricted module for the Weyl algebra we have that $M_1(\lambda, \mu)$ is a simple module over vertex algebra $M$.

Similarly for  $ \chi_0, \chi_1 \in {\C}$ let $N_1(\chi_0, \chi_1)$ be a  module over Heisenberg  algebra $\widehat{\frak h}$ generated by the vector $v_2$ such that
\bea
&& cv_2=(\kappa+2)v_2, \ b(0) v_2 = \chi_0 v_2, \ b(1) v_2 = \chi_1 v_2, \ b(m) v_2 = 0 \quad \forall m \ge 2. \nonumber
\eea
This module is also restricted, and therefore it is a module over the Heisenberg vertex algebra $\pi ^{\kappa+2}$.

So we have $M \otimes \pi ^{\kappa+2}$--module $M_{Wak} (\lambda, \mu, \kappa, \chi_0, \chi_1) := M_1(\lambda, \mu) \otimes N_1( \chi_0, \chi_1)$.

Then $M_{Wak} (\lambda, \mu, \kappa ,  \chi_0, \chi_1)$ is a simple $M \otimes \pi ^{\kappa+2}$--module iff $\kappa \ne -2$.

Let $v=v_1 \otimes v_2$.

\begin{lemma}
We have:
\bea
f(1) v &=& (\chi_1- 2 \mu \lambda) a ^*(0) v + \mu (\chi_0 -\kappa) v + \mu ^2 a(-1)  v , \nonumber \\
f(2) v & = &  \mu ( \chi_1 - \lambda \mu ) v, \nonumber \\
e(0) v &=& \lambda v ,\nonumber \\
h(1) v &=& (\chi_1 - 2 \mu \lambda ) v ,\nonumber \\
e (n) v &=& h(1+n) v = f(2+n) v = 0 \quad \forall n \ge 1. \nonumber
\eea
\end{lemma}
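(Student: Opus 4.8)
The plan is to read each mode $e(n)v$, $h(n)v$, $f(n)v$ directly off the explicit Wakimoto fields acting on $v=v_1\otimes v_2$, using the Whittaker annihilation conditions to control which powers of $z$ can occur. First I would record the three input generating series evaluated on $v$. Splitting $a(z)=a(z)^++a(z)^-$ as in the text, the conditions $a(0)v_1=\lambda v_1$, $a(n)v_1=0$ $(n\ge1)$ collapse the annihilation part to the single term $a(z)^-v=\lambda z^{-1}v$; the conditions $a^*(1)v_1=\mu v_1$, $a^*(m)v_1=0$ $(m\ge2)$ give $a^*(z)v=\mu z^{-1}v+\sum_{p\le0}a^*(p)v\,z^{-p}$, whose tail involves only nonnegative powers of $z$; and $b(0)v_2=\chi_0 v_2$, $b(1)v_2=\chi_1 v_2$, $b(m)v_2=0$ $(m\ge2)$ give $b(z)v=\chi_0 z^{-1}v+\chi_1 z^{-2}v+\sum_{n\le-1}b(n)v\,z^{-n-1}$, again with a tail in nonnegative powers. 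The decisive structural fact is that the creation part $a(z)^+=\sum_{n\le-1}a(n)z^{-n-1}$ carries only nonnegative powers of $z$, its lowest contribution being the mode $a(-1)$ at $z^{0}$; this is what makes the list of surviving modes finite and forces all the vanishing statements.

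For $e(z)=a(z)$ the identities $e(0)v=\lambda v$ and $e(n)v=0$ $(n\ge1)$ are immediate from $e(n)=a(n)$ and the annihilation conditions. For $h(z)=-2\bigl(a(z)^+a^*(z)+a^*(z)a(z)^-\bigr)+b(z)$ I would substitute $a(z)^-v=\lambda z^{-1}v$ and the expansions above. The term $a(z)^+a^*(z)v$ has lowest power $z^{-1}$, while $a^*(z)a(z)^-v=\lambda z^{-1}a^*(z)v$ contributes the single low term $-2\lambda\mu\,z^{-2}v$ together with powers $z^{\ge-1}$; adding $b(z)v$, the coefficient of $z^{-2}$ in $h(z)v$ is $(\chi_1-2\mu\lambda)v$, and there is no contribution to any power below $z^{-2}$. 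Reading $h(z)=\sum_n h(n)z^{-n-1}$ this gives exactly $h(1)v=(\chi_1-2\mu\lambda)v$ and $h(1+n)v=0$ for $n\ge1$.

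For $f(z)=-\bigl(a(z)^+(a^*(z))^2+(a^*(z))^2a(z)^-\bigr)+\kappa\,\partial_z a^*(z)+a^*(z)b(z)$ the key preliminary step is the low-order expansion of $(a^*(z))^2v$. Applying $a^*(z)$ twice and using $a^*(1)v=\mu v$, $a^*(m)v=0$ $(m\ge2)$, one gets $(a^*(z))^2v=\mu^2 z^{-2}v+2\mu\,a^*(0)v\,z^{-1}+(\text{powers }z^{\ge0})$. From here the four terms contribute as follows. The term $-a(z)^+(a^*(z))^2v$ reaches order $z^{-2}$ only through the leading creation mode $a(-1)$, producing the multiple of $a(-1)v$ recorded in $f(1)v$ and nothing below $z^{-2}$. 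The term $-(a^*(z))^2a(z)^-v=-\lambda z^{-1}(a^*(z))^2v$ contributes $-2\lambda\mu\,a^*(0)v$ at $z^{-2}$ and $-\lambda\mu^2 v$ at $z^{-3}$. The term $\kappa\,\partial_z a^*(z)v$ contributes $-\kappa\mu\,v$ at $z^{-2}$ (the $\partial_z$ of $\mu z^{-1}$) and nothing below. Finally $a^*(z)b(z)v$ contributes $\chi_0\mu\,v+\chi_1 a^*(0)v$ at $z^{-2}$ and $\chi_1\mu\,v$ at $z^{-3}$. Summing the $z^{-2}$ coefficients yields the stated $f(1)v=(\chi_1-2\mu\lambda)a^*(0)v+\mu(\chi_0-\kappa)v+\mu^2 a(-1)v$, and summing the $z^{-3}$ coefficients yields $f(2)v=\mu(\chi_1-\lambda\mu)v$; since each of the four terms has lowest power $z^{-3}$, every coefficient of $z^{-3-n}$ $(n\ge1)$ vanishes, giving $f(2+n)v=0$.

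The main obstacle is the bookkeeping of the normal-ordered products $:a^*(z)a(z):$ and $:a^*(z)^2a(z):$, since a naive expansion would produce infinitely many terms. The point that tames this is the collapse $a(z)^-v=\lambda z^{-1}v$, which reduces the $a(z)^-$ half of each normal ordering to a single power of $z$, together with the two-order expansion of $(a^*(z))^2v$; once these are in hand, one only needs to confirm that $a(z)^+$ never reaches a power below $z^{-2}$, and that all four summands of $f(z)v$ have lowest power $z^{-3}$, which delivers the nonvanishing modes and all the vanishing claims simultaneously.
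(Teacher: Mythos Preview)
Your approach is correct and is essentially the same as the paper's: both compute directly by substituting the Whittaker annihilation conditions for $a(n)$, $a^*(n)$, $b(n)$ into the explicit Wakimoto mode formulas. The paper simply writes out the few surviving mode terms for $f(1)v$, $f(2)v$, $h(1)v$ individually, whereas you organize the same calculation through the low-order Laurent expansion of the generating series---a purely stylistic difference.
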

\begin{proof}
We prove this lemma by direct calculation. We have
\bea f(1) v &=& -2 a ^* (1) a ^* (0) a (0) v + a(-1) a ^* (1) ^2 v - \kappa a ^*(1) v + b(0) a ^* (1) v + a ^* (0) b(1) v   \nonumber \\
             &=& (- 2 \mu \lambda + \chi_1) a ^* (0) v +  \mu ( \chi_0 - k) v + \mu ^2 a(-1) v,\nonumber \\
      f(2) v & = &- a^*(1) ^2 a(0) v +  a^*(1) b(1) v = (-\lambda \mu ^2 + \mu \chi_1 ) v = \mu ( \chi_1 - \lambda \mu) v,
             \nonumber  \\ h(1) v &=& -2 a ^* (1)  a (0) v  + b(1) v  = (- 2 \mu \lambda + \chi_1)  v. \nonumber \eea
The proof follows.
\end{proof}

\begin{remark}
  Unfortunately for $\lambda \cdot \mu \ne 0$ , the vector $v$ is not a Whittaker vector and therefore $U(\widehat{\sl_2}) v$ is not a classical Whittaker module. There are some hope that we can construct Whittaker vectors in Wakimoto modules  using methods developed in  the case of Virasoro algebra in \cite{Y}. We will see below that our Wakimoto modules only gives a realization of degenerate classical Whittaker modules of type $(\lambda,0) $ and $(0, \mu)$. Let us note here that in the tensor product modules we can construct classical non-degenerate Whittaker vectors. Let $L_{\widehat{\sl_2} }(\lambda, 0, \kappa_1, a)$ and  $L_{\widehat{\sl_2} }(0, \mu, \kappa_2, b)$ be simple Whittaker modules generated by Whittaker vectors $v_1$ and $v_2$. Then $v_1 \otimes v_2$ is a Whittaker vector of type $(\lambda, \mu)$ and we have
  $$  V_{\widehat{\sl_2} }(\lambda, \mu , \kappa_1+ \kappa_2) \cong U(\widehat{\sl_2} ). (v_1 \otimes v_2) \subset L_{\widehat{\sl_2} }(\lambda, 0, \kappa_1, a) \otimes L_{\widehat{\sl_2} }(0, \mu , \kappa_2, b).$$
   \end{remark}

Our previous lemma is useful for a realization of  degenerate Whittaker module.

\begin{proposition}
Assume that $\mu = 0$ and $\chi_1 = 0 $. Then
$v$ is a Whittaker vector in $M(\lambda, 0 ,  \chi_0, 0)$; i.e.,
$$ e(0) v = \lambda v, \quad f(1) v =0.$$
If $\kappa \ne -2$  then
$$L(0) v = \frac{ \chi_0  (\chi_0 +2 )}{4 (\kappa+2) }  v ,  $$
and there is a non-trivial $\widehat{\sl_2}$--homomorphism
$$ M_{\widehat{\sl_2} } (\lambda, 0, \kappa, \tfrac{ \chi_0  (\chi_0 + 2 )}{4 (\kappa+2) }  ) \rightarrow M_{Wak} ( \lambda, 0, \kappa, \chi_0, 0).$$
\end{proposition}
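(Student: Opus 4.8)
The plan is to verify that $v = v_1 \otimes v_2$ is a Whittaker vector, to compute the eigenvalue of $L(0)$ on it, and then to invoke the universal property of the Whittaker module. First, setting $\mu = 0$ and $\chi_1 = 0$ in the preceding lemma I read off directly that $f(1)v = 0$, $f(2)v = 0$, $e(0)v = \lambda v$, $h(1)v = 0$, together with $e(n)v = h(1+n)v = f(2+n)v = 0$ for all $n \ge 1$. These are exactly the relations generating the left ideal $J(\lambda, 0, \kappa)$ of Section \ref{whittaker}, so $v$ is a Whittaker vector of type $(\lambda, 0)$ at level $\kappa$; in particular $e(0)v = \lambda v$ and $f(1)v = 0$, which is the first assertion.

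Next I would compute $L(0)v$ from the explicit Sugawara vector $\omega = a(-1)a^*(-1){\bf 1} + \frac{1}{4(\kappa+2)}(b(-1)^2 - 2b(-2)){\bf 1}$ of the Wakimoto realization. Writing $L(z) = Y(\omega, z)$ and extracting the coefficient of $z^{-2}$ splits $L(0)$ into a Weyl part coming from $:a(z)\partial_z a^*(z):$ and a Heisenberg part coming from $\frac{1}{4(\kappa+2)}(:b(z)^2: - 2\partial_z b(z))$; since the two commute, $L(0)$ acts on $v_1\otimes v_2$ as $L^{Weyl}(0)\otimes 1 + 1 \otimes L^{Heis}(0)$. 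For the Weyl part one finds $L^{Weyl}(0) = \sum_m m :a(m)a^*(-m):$; every term either has vanishing coefficient (the $m=0$ term) or, after normal ordering, meets an annihilation mode $a(m)v_1 = 0$ (for $m \ge 1$) or $a^*(-m)v_1 = 0$ (for $m \le -1$, using $\mu = 0$ so that already $a^*(1)v_1 = 0$), whence $L^{Weyl}(0)v_1 = 0$. This is precisely where the degeneracy hypothesis $\mu = 0$ is used. For the Heisenberg part, $L^{Heis}(0) = \frac{1}{4(\kappa+2)}(\sum_m :b(m)b(-m): + 2b(0))$, and using $\chi_1 = 0$ the only surviving contribution is the $m=0$ term, giving $\frac{1}{4(\kappa+2)}(b(0)^2 + 2b(0))v_2 = \frac{\chi_0(\chi_0+2)}{4(\kappa+2)}v_2$. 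Adding the two pieces yields $L(0)v = \frac{\chi_0(\chi_0+2)}{4(\kappa+2)}v$, the second assertion.

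Finally, the homomorphism is obtained from the universal property. Since $v$ is a Whittaker vector of type $(\lambda, 0)$ at level $\kappa$, there is a unique $\widehat{\sl_2}$-module map $\Psi : V_{\widehat{\sl_2}}(\lambda, 0, \kappa) \to M_{Wak}(\lambda, 0, \kappa, \chi_0, 0)$ with $w_{\lambda, 0, \kappa} \mapsto v$, and it is non-trivial because $v \ne 0$. Because $L(0)$ is the Sugawara operator, built out of the affine modes, $\Psi$ commutes with $L(0)$; hence $\Psi((L(0) - a)w_{\lambda, 0, \kappa}) = (L(0) - a)v = 0$ with $a = \frac{\chi_0(\chi_0+2)}{4(\kappa+2)}$. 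Therefore the generating relation of $M_{\widehat{\sl_2}}(\lambda, 0, \kappa, a)$ lies in $\ker \Psi$, so $\Psi$ descends to the desired non-trivial map $M_{\widehat{\sl_2}}(\lambda, 0, \kappa, a) \to M_{Wak}(\lambda, 0, \kappa, \chi_0, 0)$.

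I expect the main obstacle to be the bookkeeping in the $L(0)$ computation: correctly normal-ordering the quadratic expressions and tracking which modes annihilate $v_1$ and $v_2$ under the Whittaker conditions rather than the usual highest-weight conditions, since this is exactly what forces the use of both degeneracy hypotheses $\mu = 0$ and $\chi_1 = 0$ and produces the stated eigenvalue.
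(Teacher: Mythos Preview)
Your proof is correct and follows exactly the approach the paper intends: the paper states this proposition without proof, treating it as an immediate consequence of the preceding lemma together with the explicit Sugawara decomposition $\omega = a(-1)a^*(-1){\bf 1} + \frac{1}{4(\kappa+2)}(b(-1)^2 - 2b(-2)){\bf 1}$, and you have filled in precisely those details. Your identification of where each degeneracy hypothesis is used ($\mu=0$ to kill the $a^*(1)$--contribution in the Weyl part, $\chi_1=0$ to kill the $b(-1)b(1)$--contribution in the Heisenberg part) is accurate, and the final descent to $M_{\widehat{\sl_2}}(\lambda,0,\kappa,a)$ via the universal property is the standard argument.
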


Now we shall consider the case of critical level.  So let $\kappa =-2$. Then the vertex algebra $\pi^{\kappa +2}$ is commutative.  The irreducible $\pi^0$ modules are one--dimensional. For  $$\chi (z)= \sum _{n \in {\Z} } \chi_n z ^{-n-1}  \in {\C}((z)) $$ let $N_1 (\chi) $ be the $1$--dimensional $\pi^0$ module such that $b(n)$ acts as multiplication by $\chi_n$.

By using  Proposition \ref{constr}  we get a family of $V_{\kappa} (\sl_2)$--modules  realized on the $M \otimes \pi^0$--module
 $$\overline{M_{Wak} } (\lambda, \mu, -2, \chi(z) ):= M_1 (\lambda, \nu) \otimes N_1 (\chi(z)).  $$
 Since $N_1(\chi(z))$ is $1$--dimensional, we have that  $\overline{M_{Wak} } (\lambda, \mu, -2, \chi(z) )$ are actually realized  on the $M$--module $M_1(\lambda, \mu)$ with the following action of $\widehat{sl_2}$:
 \bea
e(z)   &=& a(z); \nonumber \\
h(z) &=& -2 : a^{*}(z)  a (z) : + \chi(z) ; \nonumber \\
f(z)   &=& - : a^{*} (z) ^2 a (z) : -2 \partial_z a^{*} (z) + a^{*}(z) \chi (z) .\nonumber
\eea

\begin{theorem}  \label{ired-wak-1} Let $\lambda,\mu\in\C$ with $\lambda\ne0$. Let $\chi (z) \in {\C}((z))$ be arbitrary.
\begin{itemize}\item[(1)] The $U(\widehat{\sl_2})$--module  $\overline{M_{Wak} } (\lambda, \mu, -2, \chi(z) )$  is simple.
\item[(2)] The $U(\widehat{\sl_2})$--module  $\overline{M_{Wak} } (\lambda, \mu, -2, \chi(z) )$   has the following basis:
$$ e(-n_1-1) \cdots e(-n_r-1) h(-m_1) \cdots h(-m_s) v $$
where  $n_1 \ge \cdots \ge n_r \ge 0$, $m_1 \ge \cdots \ge m_s \ge 0$, $r,s \in {\N}$.\end{itemize}
\end{theorem}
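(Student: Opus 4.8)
The plan is to prove both parts by comparing the proposed set with the standard monomial basis of the underlying Weyl module $M_1(\lambda,\mu)$, exploiting the crucial fact that $e(n)=a(n)$ on this module. As a vector space $\overline{M_{Wak}}(\lambda,\mu,-2,\chi(z))$ is $M_1(\lambda,\mu)$, which carries the PBW-type basis $\{a(-n_1)\cdots a(-n_r)\,a^*(-m_1)\cdots a^*(-m_s)v : n_i\ge 1,\ m_j\ge 0\}$ (all creation modes commute, while $a(0)$ and $a^*(1)$ act by the scalars $\lambda,\mu$). Since $e(-n-1)=a(-n-1)$ for $n\ge 0$, the $e$-factors in the proposed basis are literally the creation modes $a(-k)$, $k\ge 1$, so the only nontrivial point is the effect of the $h$-factors.

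For part (2) I would first record the single-mode computation, for $m\ge 0$,
\[
h(-m)v=-2\lambda\,a^*(-m)v-2\sum_{k=-m}^{-1}a(k)a^*(-m-k)v-2\mu\,a(-m-1)v+\chi_{-m}v,
\]
which follows from $h(-m)=-2(:\!a^*a\!:)(-m)+\chi_{-m}$ together with $a(k)v=0\,(k\ge1)$, $a(0)v=\lambda v$, $a^*(\ell)v=0\,(\ell\ge2)$, $a^*(1)v=\mu v$. I introduce the bidegree $(\deg_{a^*},\deg_a)$ counting the numbers of $a^*$- and $a$-creation modes in a standard monomial, and order monomials by $\deg_{a^*}$ descending, then $\deg_a$ ascending. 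The displayed formula shows that the unique monomial of maximal order in $h(-m)v$ is $-2\lambda\,a^*(-m)v$, of bidegree $(1,0)$. I then show, by moving each annihilation mode to the right across the $s$ factors and tracking contractions, that in $h(-m_1)\cdots h(-m_s)v$ the only way to attain $\deg_{a^*}=s$ together with $\deg_a=0$ is to select the term $-2\lambda\,a^*(-m_j)$ from every factor: any other choice either inserts an extra $a$-mode (through the two-creation term or the $-2\mu\,a(-m-1)$ term) or lowers $\deg_{a^*}$ through a contraction. Hence the leading term is $(-2\lambda)^s a^*(-m_1)\cdots a^*(-m_s)v$. Prepending $e(-n_1-1)\cdots e(-n_r-1)=a(-n_1-1)\cdots a(-n_r-1)$, which commute with every $a^*(-m_j)$ appearing (index mismatch, since $n_i+1+m_j>0$), merely raises $\deg_a$ by $r$ and leaves the $a^*$-part untouched. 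Thus the proposed basis vector indexed by $(\{n_i\},\{m_j\})$ equals $(-2\lambda)^s$ times the standard monomial with the same data plus strictly lower-order terms, so the change-of-basis matrix is block triangular for the bidegree order with diagonal blocks $\mathrm{diag}\big((-2\lambda)^s\big)$, which are invertible because $\lambda\ne 0$; this proves (2).

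For part (1) I would reduce irreducibility over $\widehat{\sl_2}$ to irreducibility over $\widehat{\frak b}$: since $\widehat{\frak b}\subset\widehat{\sl_2}$, every $\widehat{\sl_2}$-submodule is a $\widehat{\frak b}$-submodule, so it suffices to prove $\overline{M_{Wak}}$ simple over $\widehat{\frak b}$. Part (2) gives a basis built solely from $e$- and $h$-modes, whence $\overline{M_{Wak}}=U(\widehat{\frak b})v$ is cyclic. A short computation yields $e(n)v=\delta_{n,0}\lambda v$ for $n\ge0$, $h(n)v=\chi_n v$ for $n\ge2$, and $h(1)v=(\chi_1-2\lambda\mu)v$, so $v$ is a Whittaker vector for $\widehat{\frak b}_+$ relative to the Whittaker function $\eta$ determined by these scalars (note $\eta$ is a genuine Lie homomorphism, as $\eta$ vanishes on $[\widehat{\frak b}_+,\widehat{\frak b}_+]$). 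Thus $\overline{M_{Wak}}$ is a Whittaker module for $\widehat{\frak b}$, and by Lemma~\ref{lemma2.1}(iii) it is enough to show that its Whittaker vectors form a one-dimensional space. This is exactly the situation handled in Corollary~\ref{cor-1} and Lemma~\ref{lemma5-1}(c): with the basis of part (2) and the principal leading-term order, applying a suitable raising mode ($h(k+1)$ or $e(k)$) to any element whose leading term is not $v$ produces a nonzero vector, so no nontrivial Whittaker vector exists. The sole input is $\lambda\ne0$, guaranteeing that the relevant leading coefficients do not vanish, exactly as in the earlier arguments.

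The main obstacle is the bookkeeping in part (2): one must verify that none of the cross terms arising when normal-ordering the annihilation modes across the $s$ factors $h(-m_j)$ — in particular the contractions $[a(0),a^*(0)]=1$ occurring when some $m_j=0$ — can produce a second monomial of bidegree $(s,0)$. The bidegree order is designed precisely so that every contraction strictly lowers $\deg_{a^*}$ while every non-principal choice strictly raises $\deg_a$, isolating a single maximal monomial with coefficient $(-2\lambda)^s$; making this uniqueness airtight is the crux. Once triangularity is in hand, part (1) is immediate from the $\widehat{\frak b}$-Whittaker machinery already developed in the paper, and the reduction $\widehat{\frak b}\subset\widehat{\sl_2}$ upgrades it to $\widehat{\sl_2}$-irreducibility even though, for $\mu\ne0$, $v$ is not a classical Whittaker vector for $\widehat{\sl_2}$.
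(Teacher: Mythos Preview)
Your argument is correct and close in spirit to the paper's, but the two differ in organization and in the key step for irreducibility.

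For part~(2), your triangularity argument is essentially the paper's Lemma~7.1, with one cosmetic difference: the paper first introduces the auxiliary element $\varphi=-2a(-1)a^*(0)\mathbf{1}$ and the Lie algebra $\widehat{\frak b}_1=\langle e,\varphi\rangle$, proves the analogous basis statement for $\varphi$-monomials (where there is no $\chi$-shift to track), and only afterwards observes that $h(z)=\varphi(z)+\chi(z)$, so the twist by the scalar series $\chi(z)$ leaves the triangularity intact. You work directly with $h$ and carry the $\chi_{-m}$ along; this is equivalent and your bidegree order makes the leading-term bookkeeping cleaner.

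For part~(1) your route genuinely differs. You identify $\overline{M_{Wak}}$ as a Whittaker module for $\widehat{\frak b}$ and invoke the locally-nilpotent/leading-term machinery of Lemma~2.1 and the principal-order analysis behind Corollary~3.5 and Lemma~3.9(c) to show that the only Whittaker vectors are scalar multiples of $v$. The paper instead gives a self-contained cyclicity argument (Proposition~7.2): starting from an arbitrary $w$, it first applies $e(n)=a(n)$ with $n>0$ to strip off all $a^*$-factors, landing in $\C[a(-n)\mid n\ge 1]v_1$, and then applies a product $\varphi(i_1)\cdots\varphi(i_r)$ matched to a carefully chosen monomial in the support of $w$ to recover a nonzero multiple of $v_1$. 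Your approach is more economical because it recycles Section~3; the paper's approach has the virtue of being independent of that section and of making the role of the auxiliary $\widehat{\frak b}_1$ explicit, which is reused later in Section~8. Both need only $\lambda\ne 0$.
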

 The proof of this Theorem  will be presented in Section \ref{proof-1}.

In particular, we have obtained explicit realization of simple degenerate Whittaker modules:
\begin{corollary} Assume that
$\lambda\in\C^*$, $\chi (z) \in {\C}((z))$ with    $\chi_1 = 0$. Then the $\widehat{\sl_2}$--module $\overline{M_{Wak} } (\lambda, 0 , -2, \chi(z) )$ is a simple Whittaker module;  i.e.,
$$ V_{\widehat{\sl _2} } (\lambda, 0, -2, c(z) ) \cong \overline{M_{Wak} } (\lambda, \mu, -2, \chi(z) ),  $$
where $$c(z) = \frac{1}{2} (\chi(z)^2  -  2 \partial_z \chi(z) ). $$
\end{corollary}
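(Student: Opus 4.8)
The plan is to exhibit $v = v_1 \otimes v_2$ as a cyclic Whittaker vector of type $(\lambda,0)$ whose center acts by the prescribed scalars, and then to invoke simplicity of both modules to upgrade the resulting comparison map to an isomorphism.

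First I would specialize the preceding Lemma to $\mu = 0$, $\chi_1 = 0$ and $\kappa = -2$. The formulas there collapse to $e(0)v = \lambda v$, $f(1)v = 0$, $f(2)v = 0$, $h(1)v = 0$, together with $e(n)v = h(n{+}1)v = f(n{+}2)v = 0$ for $n \ge 1$; equivalently $v$ satisfies all the relations in (\ref{gen-ideal}) with $(\lambda,\mu,\kappa) = (\lambda,0,-2)$. Hence $v$ is a Whittaker vector of type $(\lambda,0)$ at the critical level, and the universal property of $V_{\widehat{\sl_2}}(\lambda,0,-2)$ yields a homomorphism $\Psi : V_{\widehat{\sl_2}}(\lambda,0,-2) \to \overline{M_{Wak}}(\lambda,0,-2,\chi(z))$ sending $w_{\lambda,0,-2} \mapsto v$. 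Since $\overline{M_{Wak}}(\lambda,0,-2,\chi(z))$ is generated by $v$ (by the explicit basis in part (2) of the preceding Theorem, or by its irreducibility), $\Psi$ is surjective.

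Next I would compute the action of the center. At the critical level $\kappa + 2 = 0$, so the Heisenberg algebra is commutative; on $\overline{M_{Wak}}(\lambda,0,-2,\chi(z))$ every $b(n)$ acts as a scalar and the field $b(z)$ acts as the Laurent series $\chi(z)$. Feeding this into the Wakimoto expression for the center, $T(z) = \frac{1}{2}\big(b(z)^2 - 2\partial_z b(z)\big)$, and using that the normal ordering is trivial on commuting scalar fields, I obtain that $T(z)$ acts as $\frac{1}{2}\big(\chi(z)^2 - 2\partial_z \chi(z)\big) = c(z)$. Because $\chi_1 = 0$ the series $\chi(z)$ begins in degree $z^{-1}$, so $c(z)$ begins in degree $z^{-2}$; thus $c(z) = \sum_{n \le 0} c_n z^{-n-2}$ and $T(n)$ acts as $c_n$ for $n \le 0$ and as $0$ for $n \ge 1$. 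In particular $(T(n) - c_n)v = 0$ for all $n \le 0$, so $\Psi$ annihilates the submodule $\langle (T(n)-c_n)w_{\lambda,0,-2} \mid n \le 0\rangle$ and descends to a surjection $\overline{\Psi} : V_{\widehat{\sl_2}}(\lambda,0,-2,c(z)) \to \overline{M_{Wak}}(\lambda,0,-2,\chi(z))$.

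Finally, $V_{\widehat{\sl_2}}(\lambda,0,-2,c(z))$ is simple by the critical-level simplicity theorem of Section 3.2 (valid for $\lambda \in \C^*$ and $\mu = 0$), so $\ker \overline{\Psi}$ is a proper submodule and hence zero; as $\overline{\Psi}$ is also surjective it is the desired isomorphism. The step I expect to require the most care is the center computation: one must justify that at the critical level $b(z)$ genuinely acts as the scalar field $\chi(z)$ so that the normal ordering in $T(z)$ disappears, and observe that the vanishing of $\chi_1$ — which is exactly the condition making $v$ a Whittaker vector — is precisely what forces $c(z)$ to lie in $\sum_{n\le 0}\C z^{-n-2}$, matching the parametrization of the simple critical-level Whittaker quotients.
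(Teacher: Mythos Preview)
Your proof is correct and follows the same path the paper implicitly takes: the corollary is stated without a separate proof, relying on the preceding Lemma (for the Whittaker relations on $v$), the formula $T(z)=\tfrac12(b(z)^2-2\partial_z b(z))$ at the critical level, and the irreducibility results already established. The only cosmetic difference is that, since the corollary is placed immediately after the irreducibility theorem for $\overline{M_{Wak}}(\lambda,\mu,-2,\chi(z))$, the paper would most naturally use simplicity of the \emph{target} to force $\overline{\Psi}$ to be an isomorphism, whereas you invoke simplicity of the \emph{source} $V_{\widehat{\sl_2}}(\lambda,0,-2,c(z))$ from Section~3.2; either direction yields the same conclusion.
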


\begin{remark} As in the case of non-critical levels, our construction of Whittaker modules from the  Wakimoto modules, does not provide a realization of simple non-degenerate Whittaker modules. In Section \ref{wak-nondeg} we shall modify the methods  from this section and present a bosonic realization of modules $V_{\widehat{\sl _2} } (\lambda, \mu , -2, c(z) )$ such that $\lambda, \mu \ne 0$.
\end{remark}



\section{Proof of  Theorem  \ref{ired-wak-1} and more general examples}
\label{proof-1}

In the vertex algebra $(M, Y, \bf{1})$, let $e = a(-1) {\bf 1}$, $\varphi = -2 a (-1) a ^* (0) {\bf 1} \in M$, ${\frak b}_1:= {\C} e + {\C} \varphi \subset M$.
Then ${\frak b}_1$ has the structure of $2$--dimensional complex  Lie algebra with bracket
$$ [\varphi, e] = \varphi_0 e =  2 e.  $$
Let
$\widehat{\frak{b} _1} = {\frak b}_1 \otimes {\C}[t,t^{-1}]+ {\C} c$ be its affinization.

Let $\varphi(z) = Y(\varphi, z) = \sum_{n \in {\Z}} \varphi(n) z ^{-n-1}$. Then
$$ \varphi(n) = -2 \sum_{k \in {\Z}} a^* (k) a (n-k ) \qquad \mbox{for} \ n \ne 0$$
$$ \varphi(0) = -2 \left( \sum_{ k \le -1} a (k) a ^* (-k) +  \sum_{ k \ge 0}  a ^* (-k) a(k) \right). $$

By using commutator formula in the vertex algebra $M$ we get the following relations:
$$ [\varphi(n), \varphi(m)]= - 4 n \delta_{n+m,0}, \ \ [\varphi(n), e (m)] = 2 e (n+m). $$

We shall now prove that the $M$--module $M_1(\lambda, \mu)$ is simple as  $\widehat{\frak{b} _1}$--module of level $\kappa=-2$.

\begin{lemma}
We have:
$$ M_1 (\lambda, \mu)  = U(\widehat{\frak{b}_1 }) v, $$
i.e., $v_1$ is a cyclic vector.
\end{lemma}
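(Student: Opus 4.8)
The plan is to exhibit $v=v_1$ as a cyclic vector by building up an explicit PBW-type basis of $M_1(\lambda,\mu)$ one creation operator at a time, using only the currents $e(n)=a(n)$ and $\varphi(n)$ that span $\widehat{\frak{b}_1}$. Write $U=U(\widehat{\frak{b}_1})v$. Recall that $M_1(\lambda,\mu)$ has basis $\prod_{k\ge1}a(-k)^{p_k}\prod_{j\ge0}a^*(-j)^{q_j}v$, where the factors $a(-k)$ ($k\ge1$) and $a^*(-j)$ ($j\ge0$) all commute. Since $e(-k)=a(-k)$ lies in $\widehat{\frak{b}_1}$ and $U$ is $\widehat{\frak{b}_1}$-stable, it suffices to prove that every \emph{pure} vector $Sv$ with $S=\prod_{j\ge0}a^*(-j)^{q_j}$ lies in $U$: then the general basis vector $(\prod a(-k)^{p_k})(Sv)=e(-1)^{p_1}e(-2)^{p_2}\cdots(Sv)$ lies in $U$ as well. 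For such $S$ set $d(S)=\sum_j q_j$ and $W(S)=\sum_j jq_j$, and I would argue by induction first on $d(S)$ and then, for fixed degree, on the weight $W(S)$. The case $d=0$ is the vacuum $v\in U$.

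The engine of the induction is the action of $\varphi(n)$. For $n\le -1$ I would use $\varphi(n)=-2\sum_k a^*(k)a(n-k)$ and evaluate it on a pure vector $w=S'v$, exploiting that $a(0)w=\lambda w+(\text{contraction of an }a^*(0)\text{ factor})$, that $a(m)$ with $m\ge1$ strictly lowers the $a^*$-degree (it contracts an $a^*(-m)$ or annihilates), and that $a(m)$ with $m\le-1$ commutes past $S'$. One then finds that the only contribution keeping both the top degree $d(S')+1$ and the top weight comes from $k=n$, where $a(0)$ produces the scalar $\lambda$, giving the leading term $-2\lambda\,a^*(n)w$:
\[
\varphi(n)\,w=-2\lambda\,a^*(n)\,w+(\text{corrections}).
\]
I would sort the corrections into (a) terms of $a^*$-degree $\le d(S')$, and (b) equal-degree terms $-2\,a^*(k)a(n-k)w$ with $n<k\le 0$, in which $a(n-k)$ is again a creation operator.

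Choosing $a^*(n)=a^*(-j_0)$ by peeling off the most negative mode of $S$ (so $S=a^*(n)S'$ with $n=-j_0$ and $d(S')=d(S)-1$), the type-(a) corrections have degree $<d(S)$ and are handled by the outer induction hypothesis. The type-(b) corrections are the crux: since $a(n-k)$ commutes with $a^*(k)$ and with $S'$ for $n\ne0$, each such term equals $a(n-k)\bigl(a^*(k)S'v\bigr)=e(n-k)\bigl(a^*(k)S'v\bigr)$, where $a^*(k)S'$ is again pure of degree $d(S)$ but of weight $-k+W(S')<j_0+W(S')=W(S)$. Hence $a^*(k)S'v\in U$ by the inner (weight) induction, and applying $e(n-k)\in\widehat{\frak{b}_1}$ keeps it in $U$. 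Solving the displayed identity for $a^*(n)w=Sv$ and using $\lambda\ne0$ then gives $Sv\in U$. The base case of the weight induction is $W(S)=0$, i.e. $S=a^*(0)^{d}$; there I would instead apply the normally ordered $\varphi(0)$, obtaining $\varphi(0)\,a^*(0)^{d-1}v=-2\lambda\,a^*(0)^{d}v+(\text{lower-degree terms already in }U)$, again solvable for $a^*(0)^{d}v$ since $\lambda\ne0$.

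The main obstacle is precisely the equal-degree cross terms in (b): the current $\varphi(n)$ does not simply adjoin a single $a^*(n)$ but also produces mixed quadratic tails of the same $a^*$-degree, so a one-step degree induction is insufficient. The device that makes it work is the secondary induction on the $a^*$-weight $W$, together with the observation that these tails factor through a creation current $e(n-k)$ acting on a strictly lighter pure vector. Throughout, the hypothesis $\lambda\ne0$ is essential, as it is the eigenvalue $a(0)v=\lambda v$ that converts the annihilation factor $a(0)$ inside $\varphi(n)$ into the nonzero scalar multiplying the desired new $a^*(n)$.
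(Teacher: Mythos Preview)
Your argument is correct (under the standing hypothesis $\lambda\ne 0$, which you rightly flag as essential and which the paper also uses). The overall strategy coincides with the paper's: reduce to pure $a^*$-monomials and then use the currents $\varphi(n)$ to produce $a^*(n)$ modulo terms that are already controlled, with $e(n)=a(n)$ supplying the $a$-factors afterwards.

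The organization differs. The paper applies the whole product $\varphi(-n_1)\cdots\varphi(-n_r)v_1$ at once and argues that its leading term in the \emph{lexicographic} order on the $a^*$-partition is $a^*(-n_1)\cdots a^*(-n_r)v_1$, then inducts downward on that order; the $a^*(0)$ case is handled separately via $\varphi(0)$. You instead apply a single $\varphi(n)$ at a time and run a double induction on $(d(S),W(S))$, with $W(S)=0$ as the base case covering the pure $a^*(0)^d$ situation. Your treatment of the equal-degree ``type (b)'' tails is more explicit than the paper's: you observe that each such term factors as $e(n-k)$ applied to a pure $a^*$-vector of the same degree but strictly smaller weight, which is exactly what the inner induction needs. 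One small point you leave implicit is the $k=1$ contribution $a^*(1)a(n-1)S'v_1=\mu\,e(n-1)S'v_1$; it is harmless, since $S'v_1\in U$ by the outer induction and $e(n-1)\in\widehat{\frak b}_1$, so it falls under your type (a) with one extra application of $e$.
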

\begin{proof}
In order to prove that $v_1$ is a cyclic vector it is enough to verify that
$${\C}[a^* (n) \ \vert \ n \le 0] v \subset U(\widehat{\frak{b}_1 })  v_1. $$
Other basis vector can be constructed by using action of $a(n) = e(n)$, $n \le -1$.

By using the definition of the action of operator $\varphi(0)$ one can easily see that
$$ \mbox{span}_{\C} \{ \varphi(0) ^n v_1 \ \vert \ n \in {\N} \} = \mbox{span}_{\C} \{ a^* (0) ^n v_1 \ \vert n \in {\N} \}.$$
(This is also known from the theory of Whittaker modules for $\sl_2$).

So it remains to prove that

$$ a^* (-n_1) \cdots a^* (-n_r) v \in U(\widehat{\frak{b}_1}) v_1 $$
for all $r \in {\Zp}$, and  $n_1 \ge n_2 \cdots \ge n_r \ge 1.$

By the definition of action of $\varphi(-n)$, $n >0$,  we  get:
$$ \varphi(-n_1) \cdots \varphi(-n_r) v_1 = D  a^* (-n_1) \cdots a^* (-n_r) v_1 + w  \quad (D \ne 0)$$
where $$ w = \sum_{ (\lambda, \mu) \in \mathcal{P} \times \mathcal{P} } C_{\lambda, \mu} u_{\lambda, \mu} v_1 \quad (C_{\lambda, \mu} \in {\C} ) $$
and $\mu_0= (n_1+1, n_2+1, \dots, n_r+1) > \mu$ if  $C_{\lambda, \mu}\ne 0$.
The proof now follows by induction.
\end{proof}

\begin{proposition} \label{ired-1}
For every $\lambda, \mu \in {\C}$ with $\lambda \ne 0$, $M_1(\lambda, \mu)$ is a simple $\widehat{\frak{b} _1}$--module.
\end{proposition}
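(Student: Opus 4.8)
The plan is to pin down the space of Whittaker vectors of $M_1(\lambda,\mu)$ and then invoke Lemma \ref{lemma2.1}(iii). By the preceding Lemma we already have $M_1(\lambda,\mu)=U(\widehat{\frak{b}_1})v_1$, so only the behaviour of the cyclic vector and the uniqueness of Whittaker vectors remain to be settled. A direct computation with $e(n)=a(n)$ and the explicit formula for $\varphi(n)$ (entirely parallel to the computation of $h(1)v$ in Section \ref{whitt-wak}) gives
\begin{align*}
 e(0)v_1 &= \lambda v_1, & e(n)v_1 &= 0 \quad (n\ge 1),\\
 \varphi(1)v_1 &= -2\lambda\mu\, v_1, & \varphi(m)v_1 &= 0 \quad (m\ge 2).
\end{align*}
Hence $v_1$ is a Whittaker vector for $(\widehat{\frak{b}_1})_+:=\mathrm{span}\{e(n),\varphi(m)\mid n\ge 0,\ m\ge 1\}$ with Whittaker function $\eta(e(0))=\lambda$, $\eta(\varphi(1))=-2\lambda\mu$ and $\eta=0$ otherwise; one checks directly that $\eta$ is a homomorphism and that $(\widehat{\frak{b}_1})_+$ acts locally nilpotently on $\widehat{\frak{b}_1}/(\widehat{\frak{b}_1})_+$, so the standing hypotheses of Lemma \ref{lemma2.1} hold. (Equivalently, $\varphi\mapsto h$, $e\mapsto e$, $c\mapsto c$ identifies $\widehat{\frak{b}_1}$ with the critical-level Borel $\widehat{\frak b}$ and $M_1(\lambda,\mu)$ with a critical-level Whittaker $\widehat{\frak b}$-module, so irreducibility can be obtained exactly as in Lemma \ref{lemma5-1}(c).)

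Next I would set up the leading-term machinery used for $\LL$ before Lemma \ref{lemma-1}. Grade $\widehat{\frak{b}_1}$ by $(\widehat{\frak{b}_1})_0=\mathrm{span}\{e(-1),\varphi(0),c\}$ and $(\widehat{\frak{b}_1})_i=\mathrm{span}\{e(-i-1),\varphi(-i)\}$ for $i\ne 0$, so that $[(\widehat{\frak{b}_1})_i,(\widehat{\frak{b}_1})_j]\subseteq(\widehat{\frak{b}_1})_{i+j}$. Cyclicity of $v_1$ together with $M_1(\lambda,\mu)\cong M$ as a vector space (the change of basis $\varphi(-l)\mapsto a^*(-l)+(\text{lower})$ from the preceding Lemma is triangular, hence invertible) shows that the ordered monomials
$$ u_{\mathbf i}v_1=\cdots\big(\varphi(-n)^{i_{2n+2}}e(-n-1)^{i_{2n+1}}\big)\cdots\big(\varphi(0)^{i_2}e(-1)^{i_1}\big)v_1,\qquad \mathbf i\in\mathbb M, $$
form a basis, where slot $2k+1$ carries $e(-k-1)$ and slot $2k+2$ carries $\varphi(-k)$. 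To each $0\ne v$ I attach $D(v)$, its leading term $\mathfrak l(v)$ and its support with respect to the principal order $\prec$, exactly as before.

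The heart of the proof is the analogue of Lemma \ref{lemma-2} and Corollary \ref{cor-1}. For $0\ne v$ put $n=\min\{k:\mathfrak l(v)_k\ne 0\}>0$ and use the operator $x-\eta(x)$ dictated by the type of the $n$-th slot: if $n=2k+1$ (the slot $e(-k-1)$), take $x=\varphi(k+1)$, for which $[\varphi(k+1),e(-k-1)]=2e(0)$ acts as $2\lambda$ on $v_1$; if $n=2k+2$ (the slot $\varphi(-k)$), take $x=e(k)$, for which $[e(k),\varphi(-k)]=-2e(0)$ acts as $-2\lambda$. In both cases $x\in(\widehat{\frak{b}_1})_+$, so $x-\eta(x)$ annihilates every Whittaker vector, while the commutator removes the targeted factor with the nonzero scalar $\pm 2\lambda$ — this is precisely where $\lambda\ne 0$ is essential — giving $\mathfrak l\big((x-\eta(x))v\big)=\mathfrak l(v)-\varepsilon_n\prec\mathfrak l(v)$ and in particular $(x-\eta(x))v\ne 0$. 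The expected obstacle is the bookkeeping showing that the central term $[\varphi(k+1),\varphi(-k-1)]$, the cross terms $[e(k),\varphi(-j)]=-2e(k-j)$ with $j\ne k$, and the pass-through to $v_1$ all land strictly below $\mathfrak l(v)-\varepsilon_n$; this is handled by the degree estimates of Lemma \ref{lemma-1} just as in the proof of Lemma \ref{lemma-2}.

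Granting this reduction, a Whittaker vector $v$ is annihilated by all the operators $x-\eta(x)$ above, which forces $\mathfrak l(v)=\mathbf 0$, i.e. $v\in\C v_1$. Thus the Whittaker vectors of $M_1(\lambda,\mu)$ are exactly $\C v_1$, and since $v_1$ is cyclic, Lemma \ref{lemma2.1}(iii) yields that $M_1(\lambda,\mu)$ is an irreducible $\widehat{\frak{b}_1}$-module, proving Proposition \ref{ired-1}.
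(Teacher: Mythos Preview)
Your argument is correct, but it takes a different route from the paper's own proof. You identify $v_1$ as a Whittaker vector for the shifted nilradical $(\widehat{\frak b}_1)_+$, import the PBW basis and principal ordering machinery from Section~3, and then run the analogue of Lemmas~\ref{lemma-1}--\ref{lemma-2} to pin down the Whittaker vectors as $\C v_1$, finishing with Lemma~\ref{lemma2.1}(iii). This is entirely in the spirit of the $\LL$-module arguments earlier in the paper and has the virtue of uniformity: the same mechanism that handled $V(\lambda,\mu,\kappa_1,\kappa)$ handles $M_1(\lambda,\mu)$, and the appeal to Lemma~\ref{lemma5-1}(c) you mention is an honest shortcut.

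The paper's proof, by contrast, exploits the Weyl algebra structure directly and avoids the leading-term apparatus altogether. Given $0\ne w\in M_1(\lambda,\mu)$, it first applies suitable $a(n)=e(n)$ with $n>0$ to strip off all $a^*$-factors, landing in $\C[a(-n)\mid n\ge 1]v_1$; then, choosing a monomial $a(-i_1)\cdots a(-i_r)v_1$ in the support of $w$ of maximal size and minimal length, it applies $\varphi(i_1)\cdots\varphi(i_r)$, which kills every other monomial and returns a nonzero multiple of $\lambda^r v_1$. This is shorter and more elementary---no ordering on $\mathbb M$, no basis-change verification---but it is tailored to the Weyl realization, whereas your approach would transplant unchanged to any Whittaker setting with the right commutator structure. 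The one place your write-up is a bit quick is the claim that the $u_{\mathbf i}v_1$ form a basis; the triangularity you invoke is exactly what the preceding Lemma establishes, but spelling out that this upgrades from ``cyclic'' to ``PBW basis'' (e.g.\ by a graded-dimension count against $M$) would tighten the argument.
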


\begin{proof}
It is enough to prove that every vector $w$ in $ M_1(\lambda, \mu) $ is cyclic.
By applying the action of $a(n)$ we can eliminate basis elements which contain products of $a^* (-n)$.
Then we can assume that $w$ belongs to the space:
$${\C}[ a(-n) \ \vert \ n \ge 1] v_1.$$
So $w$ can be written in the form
\bea \label{exp-1}&& w = \sum_{ \nu \in \mathcal{P} } C_{\nu} u_{\nu,\phi} v_1 \quad (C_\nu \in \C)  \eea
Let $$N=  \mbox{max} \{ \vert \nu  \vert    :\ C_{\nu} \ne 0 \} \qquad  r  = \mbox{min} \{ \ell (\nu  )  : \ C_{\nu} \ne 0, \ \vert \nu \vert = N \}. $$

Take any $\nu_0= (i_1, \dots, i_r )  \in \mathcal{P}$ such that $C_{\nu_0} \ne 0$,  $\ell (\nu_0) = r$, $\vert \nu \vert = N$. (So we choose the shortest possible basis element which appear in (\ref{exp-1}) of maximal size).

 We have
$$ \varphi (i_1) \cdots \varphi  (i_r ) a  (-i_1) \cdots a (-i_r) v_1 = D  a(0) ^r v_1 = D \lambda ^r v_1 \quad (D \ne 0). $$
Moreover if $C_{\nu} \ne 0$ and $\nu \ne \nu_0$ one easily sees that
$$  \varphi (i_1) \cdots \varphi  (i_r ) u_{\nu, \phi} v_1 = 0. $$
Therefore
$$ \varphi (i_1) \cdots \varphi (i_r) w = D \lambda ^r v_1 \quad (D \ne 0). $$ The proof follows.
\end{proof}

Let now $\chi (z)  = \sum_{n \in {\Z} } \chi_n z ^{-n-1} \in {\C}(z)$ is arbitrary. Let $N_1({\chi}) = {\Bbb C}. {1}_{\chi} $ be $1$--dimensional $\pi^0$--module with the property that $b(n)$ acts on  $N_1({\chi})$ by multiplication with $\chi_n$.

Let $h= \varphi + b(-1){\bf 1}$, ${\frak b} = {\C} h + {\C} e$,  $\widehat{\frak b}$ as above. Let $\overline{M_{Wak} } (\lambda, \mu, -2, \chi(z) ) := M_1 (\lambda, \mu)  \otimes {\C} 1_{\chi}$.
As before we set $v= v_1 \otimes 1_{\chi}$.

\begin{lemma}
 Let $\lambda, \mu \in {\C}$, $\lambda \ne 0$, $\chi(z) \in {\C}((z))$. Then   $M_1 (\lambda, \mu)  \otimes {\C} 1_{\chi}$ is a simple  $\widehat{\frak b}$--module.
\end{lemma}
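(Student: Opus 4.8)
The plan is to deduce this from Proposition \ref{ired-1} by showing that the $\widehat{\frak b}$--action and the $\widehat{\frak b_1}$--action on the underlying space generate the very same algebra of operators. Since $\C 1_\chi$ is $1$--dimensional, as a vector space $M_1(\lambda, \mu) \otimes \C 1_\chi$ is canonically identified with $M_1(\lambda, \mu)$. Under this identification $e(n) = a(n)$ acts exactly as it does in the $\widehat{\frak b_1}$--module of Proposition \ref{ired-1}, while $h(n) = \varphi(n) + b(n)$ acts as the operator $\varphi(n) + \chi_n \cdot \mathrm{id}$, because $b(n)$ is multiplication by the scalar $\chi_n$ on the tensor factor $\C 1_\chi$.

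First I would record the brackets to confirm that these operators furnish a level $-2$ representation of $\widehat{\frak b}$: adding the scalar $\chi_n$ does not affect commutators, so $[h(n), e(m)] = [\varphi(n), e(m)] = 2 e(n+m)$ and $[h(n), h(m)] = [\varphi(n), \varphi(m)] = -4 n \delta_{n+m,0}$, the last equality using that $b(z)$ is commutative at the critical level. This matches the defining relations of $\widehat{\frak b}$ with $c$ acting as $-2$, the level at which Proposition \ref{ired-1} is stated.

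The key step is the observation that $\varphi(n) = h(n) - \chi_n \cdot \mathrm{id}$ lies in the image of $U(\widehat{\frak b})$, since $h(n)$ is a generator and $\chi_n$ is a scalar; conversely $h(n) = \varphi(n) + \chi_n$ lies in the image of $U(\widehat{\frak b_1})$. Hence the subalgebras of $\mathrm{End}(M_1(\lambda, \mu))$ generated by $\{ e(n), h(n) \mid n \in \Z \}$ and by $\{ e(n), \varphi(n) \mid n \in \Z \}$ coincide. Consequently a subspace of $M_1(\lambda, \mu)$ is $\widehat{\frak b}$--invariant if and only if it is $\widehat{\frak b_1}$--invariant, so the two module structures share the same lattice of submodules. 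Since the $\widehat{\frak b_1}$--module is irreducible by Proposition \ref{ired-1}, the $\widehat{\frak b}$--module $M_1(\lambda, \mu) \otimes \C 1_\chi$ is irreducible as well.

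The argument is essentially formal, so I do not expect a genuine obstacle. The only points requiring care are the precise bookkeeping of the scalar shift — that $b(n)$ acts diagonally as $\chi_n$ on $\C 1_\chi$ and therefore commutes past everything, so that $h(n)$ and $\varphi(n)$ differ by a central scalar as operators — and the verification that the level is exactly $-2$, which is what makes the $b$--contribution to $[h(n), h(m)]$ vanish and keeps the two bracket structures identical. All the substantive representation-theoretic content is already carried by Proposition \ref{ired-1}.
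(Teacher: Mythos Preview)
Your proof is correct and follows essentially the same approach as the paper's: both reduce to Proposition \ref{ired-1} via the observation that $h(n)$ and $\varphi(n)$ differ only by the scalar $\chi_n$, so the $\widehat{\frak b}$-- and $\widehat{\frak b_1}$--actions generate the same operator algebra on $M_1(\lambda,\mu)$. The paper states this in one line (``irreducibility \ldots\ is not affected if we twist the action of $\varphi(z)$ by $\chi(z)$''), while you spell out the mechanics more fully, but the content is identical.
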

\begin{proof}
In this case, $h(z) =\varphi(z) + \chi(z)$.
One can easily see that irreducibility from the Proposition \ref{ired-1} is not affected if we twist the action of $\varphi(z)$ by $\chi(z)$. The proof follows.
\end{proof}

By using the Wakimoto realization we can extend the action of  $\widehat{\frak b}$ to the action of affine Lie algebra $\widehat{\sl_2}$. Since  $M_1 (\lambda, \mu)  \otimes {\C} 1_{\chi}$  is  a simple $\widehat{\frak b}$--module it is also simple as a module for the affine Lie algebra.   We have:
\begin{theorem}
For every $\lambda, \mu \in {\C}$, $\lambda \ne 0$, $\chi(z) \in {\C}((z))$, $$\overline{M_{Wak} } (\lambda, \mu, -2, \chi(z) ) =M_1 (\lambda, \mu)  \otimes {\C} 1_{\chi} $$  is a simple  $\widehat{\sl_2}$--module at the critical level.
\end{theorem}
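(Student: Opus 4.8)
The plan is to obtain irreducibility for the full affine algebra as an immediate consequence of irreducibility under the much smaller parabolic subalgebra $\widehat{\frak b} = {\frak b}\otimes{\C}[t,t^{-1}]+{\C}c$ with ${\frak b}={\C}e+{\C}h$. First I would invoke the Wakimoto embedding at the critical level: by the embedding $V_{-2}(\sl_2)\hookrightarrow M\otimes\pi^0$ together with Proposition \ref{constr}, the space $M_1(\lambda,\mu)\otimes{\C}1_{\chi}$ acquires a genuine $V_{-2}(\sl_2)$--module structure, and hence a $\widehat{\sl_2}$--module structure of level $-2$. This is legitimate because $M_1(\lambda,\mu)$ is restricted over the Weyl algebra and ${\C}1_{\chi}$ is restricted of level $0$ over $\widehat{\frak h}$ (since $\chi(z)\in{\C}((z))$ forces $b(n)$ to act as $\chi_n=0$ for $n$ large).

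The key point is then to identify the restriction of this action to $\widehat{\frak b}$. From the Wakimoto formulas one reads off $e(z)=a(z)$ and $h(z)=-2 :a^{*}(z)a(z): +\chi(z)=\varphi(z)+\chi(z)$, so the generators $e(n),h(n),c$ close up into exactly the twisted $\widehat{\frak b}$--action already analysed: the field $\varphi(z)$ acts as the affinization of ${\frak b}_1$ of level $-2$, twisted by the scalar field $\chi(z)$. The preceding lemma states precisely that this $\widehat{\frak b}$--module is irreducible when $\lambda\ne0$. Since $\widehat{\frak b}$ is a Lie subalgebra of $\widehat{\sl_2}$, any nonzero $\widehat{\sl_2}$--submodule is in particular a nonzero $\widehat{\frak b}$--submodule and must therefore be the whole space; this forces $\overline{M_{Wak}}(\lambda,\mu,-2,\chi(z))$ to be irreducible as a $\widehat{\sl_2}$--module, which is the assertion.

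All the real work therefore sits in the $\widehat{\frak b}$--irreducibility, and I would trace that back to Proposition \ref{ired-1}: twisting the action of $\varphi(z)$ by the central field $\chi(z)$ does not change which subspaces are invariant, so it suffices to know that $M_1(\lambda,\mu)$ is irreducible over $\widehat{\frak b}_1$ for $\lambda\ne0$. I expect the main obstacle to be the cyclicity argument underlying that proposition: given an arbitrary nonzero $w$, after clearing the $a^{*}(-n)$ factors by applying suitable $a(n)=e(n)$ one writes $w=\sum_{\nu} C_{\nu} u_{\nu,\phi}v_1$ in the $a(-n)$'s, and must show that a single monomial $\varphi(i_1)\cdots\varphi(i_r)$, chosen from a shortest partition $\nu_0$ of maximal size with $C_{\nu_0}\ne0$, annihilates every other term while sending the $\nu_0$--term to $D\lambda^r v_1$ with $D\ne0$. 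The delicate part is the bookkeeping of leading terms with respect to the total order on partitions, together with the use of $\lambda\ne0$ to guarantee $a(0)^r v_1=\lambda^r v_1\ne0$, so that no cancellation can destroy the recovery of the cyclic generator $v_1$.
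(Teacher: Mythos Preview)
Your proposal is correct and follows essentially the same route as the paper: deduce $\widehat{\sl_2}$--irreducibility from $\widehat{\frak b}$--irreducibility, which in turn reduces via $h(z)=\varphi(z)+\chi(z)$ to Proposition~\ref{ired-1} since twisting by the scalar field $\chi(z)$ does not change the lattice of invariant subspaces. Your recap of the leading-term argument behind Proposition~\ref{ired-1} is accurate but strictly speaking unnecessary here, since that proposition is already established independently.
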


Assume that $\chi(z) = \sum_{ k = -p} ^{\infty}  \chi_{-k} z ^{k-1}$, $\chi_p \ne 0$. The case $p=1$ was already discussed above. So let us consider the case $p \ge 2$. We get the conditions of (generalized) Whittaker type
\bea && e(0) v = \lambda v, \label{uv-1}\\
   && h(1) v = ( - 2 \lambda \mu + \chi_1) v,  \quad h(k) v = \chi_k v \ (k=2, \dots, p),  \\
   && f(p+1) v = \mu \chi_p v ,  \\
   && e(n) v = h (n+p ) v = f (n+p+1) v = 0 \quad \forall n \in {\Zp}. \label{uv-4}
   \eea

Modules constructed above are non-isomorphic. This can be proved by using the  action of the center of $V_{-2}(\sl_2)$ which is generated by components of the field $T(z)$.  In our case
$$ T(z) = \frac{1}{2} (\chi(z) ^2 - 2 \partial_z \chi(z) ) \quad \mbox{on} \   \overline{M_{Wak} } (\lambda, \mu, -2, \chi(z) )   .$$

\begin{theorem}
Assume that $\lambda \ne 0$ and  $\chi(z) = \sum_{ k = -p} ^{\infty}  \chi_{-k} z ^{k-1}$, $\chi_p \ne 0$, $p \ge 2$. Then
$$ \overline{M_{Wak} } (\lambda, \mu, -2, \chi(z) )   \cong  \overline{M_{Wak} } (\lambda', \mu', -2, \chi'(z) )  $$ if and only if
$$\lambda = \lambda', \mu = \mu', \chi (z)  = \chi'(z). $$
\end{theorem}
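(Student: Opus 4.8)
The plan is to extract, from the module $\overline{M_{Wak}}(\lambda,\mu,-2,\chi(z))$, three invariants which determine the triple $(\lambda,\mu,\chi(z))$, and to read these invariants off a canonical (generalized) Whittaker vector. The key observation is that the center of $V_{-2}(\sl_2)$ acts by scalars determined by $\chi(z)$ via the formula $T(z) = \tfrac12(\chi(z)^2 - 2\partial_z \chi(z))$, so recovering $\chi(z)$ amounts to recovering the action of the central field $T(z)$, while $\lambda$ and $\mu$ must be recovered from the action of $e(0)$ and $f(p+1)$ on a suitable distinguished vector. Throughout I will use the relations (\ref{uv-1})--(\ref{uv-4}) governing the generating vector $v$.

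First I would establish that $v$ (up to scalar) is intrinsically characterized inside the module, so that any isomorphism $\Psi\colon \overline{M_{Wak}}(\lambda,\mu,-2,\chi(z)) \to \overline{M_{Wak}}(\lambda',\mu',-2,\chi'(z))$ must send $v$ to a scalar multiple of $v'$. Since the module is irreducible as a $\widehat{\frak b}$--module by the preceding lemma, and since $p \ge 2$ forces $\chi_p \ne 0$, the vector $v$ satisfies the generalized Whittaker conditions $e(n)v = h(n+p)v = 0$ for $n \ge 1$ together with the eigenvalue relations $e(0)v = \lambda v$, $h(k)v = \chi_k v$ for $2 \le k \le p$. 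I would argue that the space of vectors annihilated by $\{e(n),h(n+p) : n \ge 1\}$ is one--dimensional, spanned by $v$; this is the analogue of the uniqueness-of-Whittaker-vector statement and follows from the explicit basis of $M_1(\lambda,\mu)$ combined with the cyclicity results in Proposition \ref{ired-1}. Consequently $\Psi(v) = c\, v'$ for some $c \in {\C}^*$.

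Given this, comparing eigenvalues is immediate. Applying $\Psi$ to $e(0)v = \lambda v$ and using that $\Psi$ intertwines the $\widehat{\sl_2}$--action yields $\lambda = \lambda'$. Next, since $T(z)$ lies in the center and acts as the scalar field $\tfrac12(\chi(z)^2 - 2\partial_z\chi(z))$ on the first module and as $\tfrac12(\chi'(z)^2 - 2\partial_z\chi'(z))$ on the second, the isomorphism forces these two scalar fields to coincide. Thus $\chi(z)^2 - 2\partial_z\chi(z) = \chi'(z)^2 - 2\partial_z\chi'(z)$; writing $\delta(z) = \chi(z) - \chi'(z)$, this reads $\delta(z)(\chi(z)+\chi'(z)) = 2\partial_z\delta(z)$. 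Examining the leading term in the $z$--expansion, where $\chi(z)+\chi'(z)$ has a pole of order $p \ge 2$, the differential-algebraic identity forces $\delta(z) = 0$ unless the pole orders conspire; a short order-of-pole count rules out any nonzero $\delta(z)$, giving $\chi(z) = \chi'(z)$. Finally, with $\lambda = \lambda'$ and $\chi(z) = \chi'(z)$ in hand, the relation $f(p+1)v = \mu\chi_p v$ transported through $\Psi$ yields $\mu\chi_p = \mu'\chi_p$, and since $\chi_p \ne 0$ we conclude $\mu = \mu'$. The converse direction is trivial, as equal parameters give literally the same module.

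The main obstacle I anticipate is the recovery of $\chi(z)$ from $T(z)$: the map $\chi \mapsto \tfrac12(\chi^2 - 2\partial_z\chi)$ is a Riccati-type transformation and is not injective on all of ${\C}((z))$ (two distinct $\chi$'s can give the same $T$, corresponding to the gauge freedom in the Wakimoto/Miura realization). The hypothesis $p \ge 2$ together with $\chi_p \ne 0$ is precisely what is needed to pin down the pole structure and force injectivity here, so the delicate step is the order-of-pole analysis of the identity $\delta(z)(\chi(z)+\chi'(z)) = 2\partial_z\delta(z)$ showing that no nonzero solution $\delta(z)$ exists under these constraints. Everything else reduces to transporting the defining relations (\ref{uv-1})--(\ref{uv-4}) through $\Psi$ and using $\chi_p \ne 0$ to divide.
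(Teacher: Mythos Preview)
Your overall strategy matches the paper's: transport the generating vector $v$ across the isomorphism, read off eigenvalues, and use the central action $T(z)=\tfrac12(\chi(z)^2-2\partial_z\chi(z))$ to pin down $\chi(z)$. There is, however, a genuine gap in your order-of-pole argument for the Riccati identity.

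The map $\chi\mapsto \tfrac12(\chi^2-2\partial_z\chi)$ is \emph{not} injective on series with a pole of order $p+1\ge 3$. For instance, with $\chi(z)=z^{-3}$ one computes $T(z)=\tfrac12 z^{-6}+3z^{-4}$, and the recursion $T'=T$ starting from $\chi'_2=-1$ produces a genuine second solution $\chi'(z)=-z^{-3}-6z^{-1}+12z+\cdots$. In your equation $\delta(\chi+\chi')=2\partial_z\delta$, the case $\chi'_p=-\chi_p$ lets $\chi+\chi'$ drop to much lower pole order, and then the order comparison no longer forces $\delta=0$. So the ``short order-of-pole count'' fails precisely here, and the step you flag as delicate is in fact false as stated.

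The paper closes this gap by using more of the Whittaker data \emph{before} invoking the Riccati identity. From $h(k)v=\chi_kv$ for $2\le k\le p$, then $f(p+1)v=\mu\chi_p v$, and finally $h(1)v=(-2\lambda\mu+\chi_1)v$, one first obtains $\chi_k=\chi'_k$ for all $k\ge 1$ (and hence $p=p'$, $\mu=\mu'$). With this in hand, $\delta(z)=\chi(z)-\chi'(z)$ has at most a simple pole while $\chi(z)+\chi'(z)$ still has leading term $2\chi_p z^{-p-1}$; now the pole comparison (equivalently, a coefficient-by-coefficient recursion using $\chi_p\ne 0$) really does force $\delta=0$. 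You already have these $h(k)$--relations available once $\Psi(v)\in{\C}v'$; you simply need to use them rather than relying on $T(z)$ alone.

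A secondary issue: your proposed intrinsic characterization of $v$ as spanning the joint kernel of $\{e(n),\,h(n+p):n\ge 1\}$ is incorrect. For example $a(-m)v_1$ with $1\le m\le p$ also lies in this kernel (direct computation: $e(n)=a(n)$ commutes with $a(-m)$ and kills $v_1$, while in $\varphi(N)a(-m)v_1$ with $N\ge p+1$ every term vanishes since $N>m$), so the space is at least $(p+1)$--dimensional. One needs finer conditions---for instance also imposing the eigenvector conditions for $e(0)$ and $h(1),\dots,h(p)$---before uniqueness can hold. The paper does not spell this step out either, merely asserting that (\ref{uv-1})--(\ref{uv-4}) transport across the isomorphism, but the specific claim you make is false.
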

\begin{proof}
Assume that $$ \overline{M_{Wak} } (\lambda, \mu, -2, \chi(z) )   \cong  \overline{M_{Wak} } (\lambda', \mu', -2, \chi'(z) ) . $$ Then conditions (\ref{uv-1})-(\ref{uv-4}) easily imply that
\bea \lambda=\lambda', \ \mu = \mu', \ \chi_k= \chi'_k \ \forall k \ge 1. \label{uv-5} \eea
On the other hand the action of the central elements should be the same on both modules. Therefore
\bea \chi (z) ^2 -  2 \partial_z \chi(z)  = \chi'(z) ^2-   2 \partial_z \chi'(z). \label{uv-6} \eea
By straightforward calculation, using (\ref{uv-5}) and identifying coefficients in the Laurent expansions of (\ref{uv-6}) we get that $\chi(z) = \chi'(z)$. The proof follows.
\end{proof}

Let  $\pi_s$ be the automorphism of $U(\widehat{\sl_2})$ such that
$$ \pi_s (e(n)) = e(n-s), \ \pi_s (f(n) ) = f (n+s), \ \pi_s (h(n) ) = h(n) - s \delta_{n,0}c $$
(cf. \cite{A-2007}). Then for every $s \in {\Z}$, $\pi_s( \overline{M_{Wak} } (\lambda, \mu, -2, \chi(z) ) )$ is also a simple $\widehat{\sl_2}$--module at the critical level generated by the vector $v$ such that
\bea && e(s) v = \lambda v, \nonumber \\
   && h(1) v = ( - 2 \lambda \mu + \chi_1) v,  \quad h(k) v = \chi_k v \ (k=2, \dots, p), \nonumber \\
   && f(p+1-s) v = \mu \chi_p v, \nonumber  \\
   && e(n+s) v = h (n+p ) v = f (n+p+1-s) v = 0 \quad \forall n \in {\Zp}. \nonumber
   \eea

\section{Bosonic Realization of non-degenerate classical Whittaker modules at the critical level}
\label{wak-nondeg}

In this section we shall present a  bosonic realization of the  classical Whittaker modules in the
non-degenerate case. It turns out that we need to extend the  Wakimoto modules to a larger space. The main idea is to replace the  Weyl vertex algebra $M$ with a larger vertex algebra $\Pi(0)$ obtained using the localization of $M$ with respect to $a(-1)$.

Let $L$ be the lattice
$$ L= {\Z} \alpha + {\Z}\beta, \ \la \alpha , \alpha \ra = - \la \beta , \beta \ra = 1, \quad \la \alpha, \beta \ra = 0, $$
and $V_L = M_{\alpha, \beta} (1) \otimes {\C} [L]$ the associated lattice vertex superalgebra, where $M_{\alpha, \beta}(1) $ is the  Heisenberg vertex algebra generated by fields $\alpha(z)$ and $\beta(z)$ and ${\C}[L]$ is the group algebra of $L$.
 We have its subalgebra
$$ \Pi (0) = M_{\alpha, \beta} (1) \otimes {\C} [\Z (\alpha + \beta) ] \subset V_L. $$

The Weyl vertex algebra  $M$ can be realized as a subalgebra of $\Pi(0)$ generated by
$$ a = e^{\alpha + \beta}, \ a^{*} = -\alpha(-1) e^{-\alpha-\beta}. $$
Recall that (cf. \cite{A-2007}, \cite{efren}): $$ M = \mbox{Ker}_{\Pi(0)} e ^{\alpha}_0.$$

This vertex algebra can be described as a localization of Weyl vertex algebra with respect to $a(-1)$, $\Pi(0) = M[(a(-1) ^{-1}]$.
Let us write $a^{-1} :=e ^{-\alpha -\beta}$ and $a^{-1} (n) := e ^{-\alpha-\beta}_{n-2}$. We have the expansion
 $$ Y(a^{-1} , z) = \sum_{n \in {\Z} } a^{-1} (n) z ^{-n +1} . $$
Choose the Virasoro vector
$$\omega = \frac{1}{2} (\alpha (-1) ^2 - \alpha(-2) - \beta(-1) ^2 + \beta(-2) ). $$
Then $\omega$ defines a $\Z$--graduation on $\Pi(0)$ so that $\deg a = 1$ and $\deg a ^{-1} = -1$.
In particular on any $\Pi(0)$--modules we have:
\bea \label{L(0)} [L(0), a (n)] = -n a(n), \ [L(0), a^{-1} (n)] = - n a^{-1} (n).  \label{rel-L0}\eea

The following theorem shows the existence of a Whittaker module for the vertex algebra $\Pi(0)$.

\begin{theorem} \label{ired-pi} Assume that $\lambda \ne 0$.
There is a $\Pi(0)$--module $\Pi_{\lambda}$ generated by the cyclic vector $w_{\lambda}$ such that
$$ a(0) w_{\lambda} = \lambda w_{\lambda},  \quad
a^{-1} (0) w_{\lambda} = \frac{1}{\lambda}  w_{\lambda}, \quad \ a(n) w_{\lambda} =a^{-1} (n) w_{\lambda} = 0 \quad \mbox{for} \ n \ge 1. $$
As a vector space
$$\Pi_{\lambda} \cong {\C} [d(-n), c(-n-1) \  \vert n \ge 0 ] = {\C}[d(0)] \otimes M_{\alpha, \beta} (1), $$
where $c= \alpha + \beta$, $d = \alpha- \beta$.

The module $\Pi_{\lambda}$ is $\Zp$--graded
 $$\Pi_{\lambda} = \bigoplus_{ n \in {\N} } \Pi_{\lambda} (n) $$
 and its lowest component is isomorphic to ${\C}[d(0)]$.
\end{theorem}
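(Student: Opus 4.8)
The plan is to construct $\Pi_{\lambda}$ by hand as a Fock-type module for the Heisenberg subalgebra of $\Pi(0)$ and then extend the action to the two ``charged'' fields $a=Y(e^{\alpha+\beta},z)$ and $a^{-1}=Y(e^{-\alpha-\beta},z)$. Writing $c=\alpha+\beta$ and $d=\alpha-\beta$, the bilinear form on $L$ gives $\la c,c\ra=\la d,d\ra=0$ and $\la c,d\ra=2$, so the Heisenberg relations become $[c(m),d(n)]=2m\delta_{m+n,0}$ and $[c(m),c(n)]=[d(m),d(n)]=0$; thus $c,d$ are isotropic and dual. First I would take the vector space $\Pi_{\lambda}:=\C[d(-n),c(-m)\mid n\ge 0,\ m\ge 1]=\C[d(0)]\otimes M_{\alpha,\beta}(1)$ with distinguished vector $w_{\lambda}$, and let the Heisenberg modes act in the standard Fock way: $c(-m),d(-m)$ (for $m\ge 1$) and $d(0)$ act by multiplication, while $c(m)=2m\,\partial_{d(-m)}$ and $d(m)=2m\,\partial_{c(-m)}$ annihilate $w_{\lambda}$ for $m\ge 1$, the center acting at level $1$. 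Since $\la c,c\ra=0$, the mode $c(0)$ commutes with every generator and with $a,a^{-1}$, so it acts by a scalar on the cyclic module, its value being pinned by the normalization $\deg a=1$ of the Virasoro vector $\omega$.

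Next I would define the action of $a$ and $a^{-1}$ through the usual lattice vertex-operator formula, with the lattice shift $e_{\pm c}$ replaced by the scalar $\lambda^{\pm 1}$ together with the substitution $d(0)\mapsto d(0)\pm 2$ (the shift of $\mathfrak h$-momentum by $\pm c$, using $\la d,c\ra=2$); because $\la c,c\ra=0$ the exponential factors are finite on each vector and preserve $\Pi_{\lambda}$. From these explicit formulas the Whittaker conditions are immediate: the positive Heisenberg modes kill $w_{\lambda}$ and $e_{\pm c}w_{\lambda}=\lambda^{\pm 1}w_{\lambda}$, whence $a(0)w_{\lambda}=\lambda w_{\lambda}$, $a^{-1}(0)w_{\lambda}=\lambda^{-1}w_{\lambda}$, and $a(n)w_{\lambda}=a^{-1}(n)w_{\lambda}=0$ for $n\ge 1$. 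Here the hypothesis $\lambda\ne 0$ is essential, as it is exactly what makes the scalar $\lambda^{-1}$ (and hence $a^{-1}$) available.

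The hard part will be verifying that these prescribed field actions genuinely assemble into a $\Pi(0)$-module, i.e.\ that the Borcherds/Jacobi identities hold for all pairs of generating fields (the delicate ones being $a$ against $a^{-1}$ and $a^{\pm 1}$ against $c,d$). The difficulty is that $\Pi_{\lambda}$ is \emph{not} a standard lattice-graded $V_L$-module: the lattice shift has been collapsed to a scalar and $d(0)$ acts freely, so one cannot simply cite lattice vertex-algebra module theory. Rather than checking the identities termwise, I would invoke the description $\Pi(0)=M[(a(-1))^{-1}]$ together with the general principle that a module over this localization is precisely an $M$-module on which the field $a(z)$ acts invertibly. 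The relevant $M$-module is a Weyl-algebra Whittaker module of type $(\lambda,0)$: on it the leading term of $a(z)w_{\lambda}$ is $\lambda z^{-1}w_{\lambda}$ with $\lambda\ne 0$, so $a(z)$ is invertible and $a^{-1}(z)$ with $a(z)a^{-1}(z)=\mathrm{Id}$ is forced, which determines and legitimizes the $\Pi(0)$-action; identifying the underlying space with $\C[d(0)]\otimes M_{\alpha,\beta}(1)$ is then a change of generating fields.

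Finally, the grading and cyclicity are routine once the module structure is in place. The Virasoro vector $\omega=\tfrac12\bigl(\alpha(-1)^2-\alpha(-2)-\beta(-1)^2+\beta(-2)\bigr)$ gives $\deg c(-m)=\deg d(-m)=m$ and $\deg d(0)=0$, so $\Pi_{\lambda}=\bigoplus_{n\ge 0}\Pi_{\lambda}(n)$ is $\Zp$-graded with lowest component $\Pi_{\lambda}(0)=\C[d(0)]w_{\lambda}\cong \C[d(0)]$. Cyclicity of $w_{\lambda}$ follows because the zero mode $d(0)$, which is a component of the field $Y(d(-1){\bf 1},z)\in\Pi(0)$, together with the negative modes $c(-m),d(-m)$, generates all of $\C[d(0)]\otimes M_{\alpha,\beta}(1)$ from $w_{\lambda}$.
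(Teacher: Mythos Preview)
Your construction in steps 1--2 is essentially the same module the paper builds, but the paper obtains the $\Pi(0)$-module structure by invoking the Berman--Dong--Tan machinery \cite{BDT} rather than by direct verification. Concretely, the paper introduces the associative algebra $\mathcal{A}$ generated by $d$ and group-like elements $e^{nc}$ with $[d,e^{nc}]=2n e^{nc}$, and quotes from \cite{BDT} that for any $\mathcal{A}$-module $U$ and any $\gamma\in\tfrac{1}{2}\Z d$ the space $U\otimes M_{\alpha,\beta}(1)$ carries a unique $\Pi(0)$-module structure, irreducible iff $U$ is. Your $U_\lambda$ is exactly their $\mathcal{A}$-module on which $e^{nc}$ acts by $\lambda^{n}$ and $d$ acts freely; with $\gamma=-\tfrac{1}{2}d$ one gets $c(0)=-1$ and the Whittaker relations and $\Zp$-grading follow just as you say. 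The paper also remarks that Zhu's algebra gives a second route, since $A(\Pi(0))\cong\mathcal{A}$.

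The genuine soft spot in your proposal is step~3. The ``general principle'' that $\Pi(0)$-modules are precisely $M$-modules on which $a(z)$ is invertible is plausible and morally correct, but it is not a result you can simply cite, and your sketch does not close the gap. Knowing that $a(z)$ has invertible leading coefficient on $M_1(\lambda,0)$ lets you \emph{define} a formal inverse $a^{-1}(z)$, but you must then check that this inverse is a field in the vertex-algebra sense (mutual locality with $a(z)$, $a^*(z)$ and the Heisenberg fields) and that the resulting map from $\Pi(0)$ respects all Borcherds identities---which is exactly the verification you were trying to avoid. Nothing here is false, but as written this step carries the whole weight of the theorem and is left at the level of a heuristic. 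Either cite \cite{BDT} (or \cite{LW}) for the module construction, prove the localization principle carefully, or bite the bullet and check the OPEs of $Y(e^{\pm c},z)$ against the Heisenberg generators directly; the last is not hard once you have your explicit formulas from step~2, since $\la c,c\ra=0$ makes the exponentials polynomial on each graded piece.
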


\begin{proof}  The proof is based on the construction presented in \cite{BDT} (see also \cite{LW}) . We  shall omit  some of the details.   Let
 $\mathcal{A}$ be  the unital associative algebra generated by generators
 $$ d, \ \ e^{ n c}, n \in {\Z} $$ and relations
 $$[d, e^{ n c} ] = 2 n e^{n c}, \quad e ^{n c} e^{m c} = e^{(n+m) c} \quad (n,m \in {\Z}). $$
 The results from Section 4 of \cite{BDT} implies that for any $\mathcal{A}$-module $U$ and any $\gamma \in \tfrac{1}{2}  \Z d$ there exists  the  unique $\Pi(0)$--module structure on the vector space
$$L_{\gamma} (U) = U  \otimes M_{\alpha, \beta}(1). $$
Moreover, $U  \otimes M_{\alpha, \beta}(1)$ is a simple  $\Pi(0)$--module if and only if $U$ is a simple $\mathcal{A}$--module.
On $L_{\gamma} (U) $ we have
 $$d(0) = d \otimes Id, \ c(0) = \langle c, \gamma \rangle Id. $$

Let $U_{\lambda} $ be the $\mathcal{A}$--module generated by the vector $v_1$ such that $$e^{ n c} v_1 = \lambda ^n  v_1 \quad (n \in {\Z})$$
and that $d$ acts freely.
 Then $U_{\lambda} $ is a simple $\mathcal{A}$--module and $U_{\lambda} = \mathcal{A}. v_1 \cong {\C}[d]$ as a vector space.

 Let  $\gamma = -\frac{1}{2} d $.
 Then
 $$\Pi_{\lambda}:= L_{\gamma} (U_{\lambda}) $$
 is a simple $\Pi(0)$--module.
  Let $v = v_1 \otimes 1$. By construction we have:
 \bea   a(0) v = \lambda v,  \ a^{-1} (0) v = \frac{1}{\lambda} v, \ a(n) v = a^{-1} (n)  v = 0 \quad (n \ge 1). \label{rel-hw-1} \eea
 Since
 $$L(0) v = \frac{1}{2} (c(0) d(0) + d(0) ) v = 0$$relations (\ref{rel-L0}) and (\ref{rel-hw-1}) imply that $L(0)$ acts semisimply on $\Pi_{\lambda}$ and it defines a required $\Zp$--graduation.
   The proof follows.
\end{proof}

\begin{remark}
Note that $\Pi(0)$ is $\Z$--graded, while it is not $\Zp$--graded. But one can also apply Zhu's algebra theory to construct and classify its $\Zp$--graded modules.
One can show that in our case Zhu's algebra $A(\Pi(0))$ is isomorphic to the associative algebra $\mathcal{A}$  and therefore  $U$ has the structure of a simple module for $A(\Pi(0))$.  So  the existence of the module $\Pi_{\lambda}$ also follows from  Zhu's algebra theory.
\end{remark}

Since $M \subset \Pi(0)$ we can consider $\Pi_{\lambda}$ as a $\widehat{\frak b}_1$--module.

\begin{lemma} \label{ired-7} Let $\lambda\ne0$. Then we have:\begin{itemize}
\item[(i)]
$\Pi_{\lambda}  \cong M_1(\lambda, 0) $  as  $M$--modules;
\item[(ii)]  $\Pi_{\lambda} $ is  simple $\widehat{\frak b}_1$--module.\end{itemize}
\end{lemma}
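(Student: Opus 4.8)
The plan is to establish (i) first and obtain (ii) as an immediate consequence. The key point for (i) is that, after restriction along $M \subset \Pi(0)$, the cyclic vector $w_{\lambda}$ should be a Whittaker vector of type $(\lambda,0)$ for the Weyl algebra; once this is verified, $\Pi_{\lambda}$ is forced to be a cyclic image of the irreducible Weyl module $M_1(\lambda,0)$, and a cyclicity argument upgrades this to an isomorphism.

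The main computation is to show $a^{*}(m) w_{\lambda} = 0$ for all $m \geq 1$. I would use the realization $a^{*} = -\alpha(-1) e^{-\alpha-\beta}$, which by the iterate formula in the vertex algebra gives $a^{*}(z) = -\alpha(z)_{+} Y(a^{-1},z) - Y(a^{-1},z)\alpha(z)_{-}$, where $\alpha(z)_{\pm}$ are the regular/singular parts of $\alpha(z)$. On $w_{\lambda}$ the data of Theorem \ref{ired-pi} give $a^{-1}(0) w_{\lambda} = \lambda^{-1} w_{\lambda}$, $a^{-1}(n) w_{\lambda} = 0$ and $\alpha(n) w_{\lambda} = 0$ for $n \geq 1$; hence, with the convention $Y(a^{-1},z) = \sum_{n} a^{-1}(n) z^{-n+1}$, the series $Y(a^{-1},z) w_{\lambda}$ involves only powers $z^{\geq 1}$, while $\alpha(z)_{-} w_{\lambda} = \alpha(0) w_{\lambda}\, z^{-1}$. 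Since $[\alpha(0), a^{-1}(n)]$ is a scalar multiple of $a^{-1}(n)$, the vector $\alpha(0) w_{\lambda}$ is again annihilated by $a^{-1}(n)$ for $n \geq 1$, so $Y(a^{-1},z)\alpha(0) w_{\lambda}$ also has only powers $z^{\geq 1}$. Combining the two terms, $a^{*}(z) w_{\lambda}$ contains only non-negative powers of $z$, i.e. $a^{*}(m) w_{\lambda} = 0$ for $m \geq 1$. Together with $a(0) w_{\lambda} = \lambda w_{\lambda}$ and $a(n) w_{\lambda} = 0$ ($n \geq 1$), this identifies $w_{\lambda}$ as a Whittaker vector of type $(\lambda,0)$; in particular the $z^{1}$-coefficient vanishes, consistent with $\mu = 0$.

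Next I would produce the isomorphism. Realizing $M_1(\lambda,0)$ as the Weyl algebra modulo the left ideal $J$ generated by $a(0)-\lambda$, $a(n)$ ($n \geq 1$) and $a^{*}(m)$ ($m \geq 1$), the previous step says $J w_{\lambda} = 0$, so $u v_1 \mapsto u w_{\lambda}$ defines an $M$-module homomorphism $\psi : M_1(\lambda,0) \to \Pi_{\lambda}$. As $M_1(\lambda,0)$ is irreducible over $M$, $\psi$ is injective. Surjectivity amounts to the cyclicity $M\, w_{\lambda} = \Pi_{\lambda}$: the lowest (weight zero) component $\C[d(0)] w_{\lambda}$ is reached by iterating $a^{*}(0)$ — one computes $a^{*}(0) w_{\lambda} = -\tfrac{1}{2\lambda}\bigl(1 + d(0)\bigr) w_{\lambda}$, which is triangular in $d(0)$, mirroring the identity $\mathrm{span}\{\varphi(0)^{n} v_1\} = \mathrm{span}\{a^{*}(0)^{n} v_1\}$ already used in the cyclicity lemma for $M_1(\lambda,\mu)$ — while the remaining graded pieces are generated from these by the negative modes $a(-n) = e(-n)$ and $a^{*}(-m)$, $n,m \geq 1$. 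Equivalently, I would check that the ordered monomials in $a(-n)$, $a^{*}(-m)$ applied to $w_{\lambda}$ form a basis of $\Pi_{\lambda}$, matching the basis of $M_1(\lambda,0) \cong M$. Hence $\psi$ is an isomorphism of $M$-modules.

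Finally, (ii) is immediate: the Lie algebra $\widehat{\frak b}_1$ lies inside the Weyl vertex algebra $M$ (its modes $e(n) = a(n)$ and $\varphi(n)$ are expressions in the Weyl modes), so $\psi$ is simultaneously an isomorphism of $\widehat{\frak b}_1$-modules, and $\Pi_{\lambda}$ inherits the irreducibility of $M_1(\lambda,0)$ established in Proposition \ref{ired-1}. I expect the surjectivity/cyclicity step to be the main obstacle: the natural $L(0)$-grading on $\Pi_{\lambda}$ has infinite-dimensional weight spaces (the factor $\C[d(0)]$ sits entirely in weight zero), so a bare character comparison does not settle bijectivity, and one must genuinely exhibit $w_{\lambda}$ as an $M$-cyclic vector through the triangular action of $a^{*}(0)$ together with the negative Weyl modes. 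The Whittaker-vector computation itself is routine once the nonstandard mode convention for $Y(a^{-1},z)$ is tracked carefully.
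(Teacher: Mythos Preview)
Your overall strategy coincides with the paper's: verify that $w_{\lambda}$ is a Whittaker vector of type $(\lambda,0)$, obtain an injective $M$--module map $M_1(\lambda,0)\to\Pi_{\lambda}$ from the irreducibility of $M_1(\lambda,0)$, prove cyclicity $M.w_{\lambda}=\Pi_{\lambda}$, and then read off (ii) from Proposition~\ref{ired-1}. Your computation of $a^{*}(m)w_{\lambda}=0$ for $m\geq 1$ via the lattice realization is more explicit than the paper, which simply asserts these relations.

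The one place you diverge is the cyclicity step. You propose to reach the lowest component $\C[d(0)]w_{\lambda}$ by the triangular action of $a^{*}(0)$ (your formula $a^{*}(0)w_{\lambda}=-\tfrac{1}{2\lambda}(1+d(0))w_{\lambda}$ is correct, and together with $[d(0),a^{*}(0)]=-2a^{*}(0)$ it does give $\C[a^{*}(0)]w_{\lambda}=\C[d(0)]w_{\lambda}$), and then appeal to negative Weyl modes for the higher graded pieces. The paper instead uses the single relation $a(z)a^{-1}(z)=Y(a(-1)a^{-1}\mathbf{1},z)=Y(\mathbf{1},z)=\mathrm{Id}$: since the modes of $a$ and $a^{-1}$ commute (as $\langle c,c\rangle=0$), this identity yields a recursion expressing each $a^{-1}(-m)w_{\lambda}$ as a polynomial in the $a(-n)$'s applied to $w_{\lambda}$, whence $\C[a^{-1}(-n)\mid n\geq 0]w_{\lambda}\subset \C[a(-n)\mid n\geq 0]w_{\lambda}\subset M.w_{\lambda}$, and therefore $M.w_{\lambda}=\Pi_{\lambda}$. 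The paper's route is shorter and avoids having to argue separately that the negative Weyl modes generate all of $M_{\alpha,\beta}(1)$ over $\C[d(0)]w_{\lambda}$; your route is more hands-on and makes the correspondence $\C[a^{*}(0)]\leftrightarrow\C[d(0)]$ explicit, at the cost of leaving the ``remaining graded pieces'' step somewhat informal. Either way the argument goes through, and you have correctly flagged cyclicity as the only nontrivial point.
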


\begin{proof}
  We shall first consider $\Pi_{\lambda}$ as a module over the Weyl vertex algebra $M$. We directly see that
 $$ a(0) w_{\lambda} = \lambda w_{\lambda}, \ a(n+1) w_{\lambda} = a^* (n+1) w_{\lambda} = 0 \quad (n \ge 0). $$
 Therefore the cyclic $M$--submodule $M. w_{\lambda} $  of $\Pi_{\lambda}$ is isomorphic to $M_1(\lambda, 0 )$.

 On the other we have relation
 $$ a(z) a^{-1} (z) = Y(a (-1)  a^{-1} {\bf 1}, z)  =Y ({\bf 1},z) = Id $$
 which easily implies that
 $${\C} [a^{-1} (-n) \ \vert \ n \ge 0]  w_{\lambda} \subset {\C} [a (-n) \ \vert \ n \ge 0]  w_{\lambda } \subset M. w_{\lambda}  . $$
 This proves that  $ M. w_{\lambda} = \Pi_{\lambda} $, and assertion (i) holds. Assertion (ii) follows from (i) and
Proposition \ref{ired-1}.
\end{proof}

Let $M_T(0)$ be the commutative vertex subalgebra of $V_{-2}(\sl_2)$ generated by $T(z)=\sum_{n \in {\Z} } T(n) z ^{-n-2}$.

By using standard calculations in vertex algebras we get:
\begin{proposition} \label{embedding-new}
There is an embedding of vertex algebras
$$ V_{-2} (\sl_2) \rightarrow M_T (0) \otimes \Pi(0)$$
such that
\bea
e & = &  a  , \label{def-e-2} \\
h & = & - 2 \beta(-1) = -2 a^* (0) a(-1) {\bf 1}  \label{def-h-2} ,\\
f & = & \left[  T(-2) - (\alpha(-1) ^2 - \alpha(-2) ) \right] a^{-1} \label{def-f-2} \\
  &=& - a^* (0) ^2 a(-1) {\bf 1} -2  a ^* (-1) {\bf 1} +  T(-2) a^{-1}.
\eea
\end{proposition}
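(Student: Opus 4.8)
The plan is to invoke the universal property of the vacuum affine vertex algebra $V_{-2}(\sl_2)$: since it is strongly generated by the weight-one fields attached to $e,h,f$ and is universal at level $-2$, giving a vertex algebra homomorphism out of it is the same as giving a triple of vectors $E,H,F$ in the target whose fields $Y(E,z),Y(H,z),Y(F,z)$ satisfy the operator product expansions encoding the $\widehat{\sl_2}$ current relations at level $-2$; the homomorphism then exists and is unique. So the existence half reduces to checking, for $E=a$, $H=-2\beta(-1)\mathbf{1}$ and $F=[T(-2)-(\alpha(-1)^2-\alpha(-2))]a^{-1}$ inside $M_T(0)\otimes\Pi(0)$, the six OPEs $e(z)e(w)\sim 0$, $f(z)f(w)\sim 0$, $h(z)h(w)\sim -4(z-w)^{-2}$, $h(z)e(w)\sim 2e(w)(z-w)^{-1}$, $h(z)f(w)\sim -2f(w)(z-w)^{-1}$ and $e(z)f(w)\sim h(w)(z-w)^{-1}-2(z-w)^{-2}$, the last two coefficients $-4$ and $-2$ being the manifestation of $\kappa=-2$.

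First I would carry out these computations inside the lattice vertex superalgebra $V_L\supset\Pi(0)$, using three standard inputs: the vectors $\pm(\alpha+\beta)$ are isotropic, $\langle\alpha+\beta,\alpha+\beta\rangle=0$; the Heisenberg OPEs $\gamma(z)\delta(w)\sim\langle\gamma,\delta\rangle(z-w)^{-2}$ and $\gamma(z)e^{\delta}(w)\sim\langle\gamma,\delta\rangle\,e^{\delta}(w)(z-w)^{-1}$; and the fact that $T$ lives in the separate tensor factor $M_T(0)$, so $T(z)$ is central and has regular OPE with everything coming from $\Pi(0)$. Isotropy gives $e(z)e(w)\sim 0$ immediately, while $h(z)h(w)=4\beta(z)\beta(w)$ together with $\langle\beta,\beta\rangle=-1$ yields the $-4$ double pole, and the two $h$-with-$e,f$ OPEs follow from the charges $\langle\beta,\alpha+\beta\rangle=-1$ and $\langle\beta,-\alpha-\beta\rangle=1$ (note $\beta$ commutes with all $\alpha$-oscillators and with $T$, so only the lattice charge contributes). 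The delicate relation is $e(z)f(w)$: because $T$ is central and $a(z)a^{-1}(w)$ is regular with constant term the identity (this is exactly $a(z)a^{-1}(z)=\mathrm{Id}$), the summand $T(-2)a^{-1}$ of $f$ contributes nothing to the singular part, so the whole $ef$ bracket collapses to the usual Wakimoto contraction of $a(z)$ against $-(\alpha(-1)^2-\alpha(-2))a^{-1}$, which produces precisely the simple pole $h(w)$ and the double pole $-2$. For $f(z)f(w)$ one again uses isotropy of $-\alpha-\beta$ and centrality of $T$ to reduce to regularity of the self-OPE of the purely $\Pi(0)$-valued part of $f$.

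The real obstacle is injectivity, since $V_{-2}(\sl_2)$ is far from simple at the critical level and one cannot invoke simplicity. Here I would exploit the design of the formula for $f$: a direct computation of the image of the central vector $t=\tfrac12(e(-1)f(-1)+f(-1)e(-1)+\tfrac12 h(-1)^2)\mathbf{1}$ should show that $\Phi(t)=T(-2)\mathbf{1}$, so that $\Phi$ carries the center of $V_{-2}(\sl_2)$, which is the polynomial vertex algebra $M_T(0)\cong\C[T(-n)\mid n\ge 2]$, isomorphically onto the tensor factor $M_T(0)$. Using that $V_{-2}(\sl_2)$ is free over its center (equivalently, a PBW leading-term argument in which the modes of $f$ are the only images carrying the algebraically independent generators $T(-n)$, while the modes of $e$ and $h$ are detected inside $\Pi(0)$ through $a(-n)$ and $\beta(-n)$), one concludes that distinct PBW monomials have linearly independent images. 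I expect this final separation of the central directions from the $\Pi(0)$-directions to be the point requiring the most care; once it is done, $\Phi$ is an embedding.
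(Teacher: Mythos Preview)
Your approach is correct and, in fact, considerably more detailed than what the paper offers: the paper's entire proof is the single sentence ``By using standard calculations in vertex algebras we get:'' with no further argument. So there is nothing to compare against beyond confirming that your outline is the natural way to carry out those ``standard calculations''.

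Your plan for the homomorphism is exactly right. The universal property of $V_{-2}(\sl_2)$ reduces existence to the six OPEs, and the three structural facts you isolate---isotropy of $\alpha+\beta$, the Heisenberg/lattice OPEs, and the fact that $T$ lives in a separate commutative tensor factor---are precisely what make the computations go through. Your observation that $a(z)a^{-1}(w)$ is regular (because $\langle\alpha+\beta,-\alpha-\beta\rangle=0$) is the key point for the $e$--$f$ OPE; it forces the $T(-2)a^{-1}$ summand of $f$ to be invisible in the singular part, so the bracket collapses to the familiar Wakimoto contraction. One small caution: for $f(z)f(w)\sim 0$ you should not only invoke isotropy of $-\alpha-\beta$ but also keep track of the $\alpha$-oscillator contributions in $(\alpha(-1)^2-\alpha(-2))a^{-1}$, since $\langle\alpha,-\alpha-\beta\rangle=-1\ne 0$; the cancellation there is genuine and not just a matter of charge counting.

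Your injectivity strategy---showing $\Phi(t)=T(-2)\mathbf{1}$ so that the center maps isomorphically onto the $M_T(0)$ factor, and then separating the $e,h$ modes via their images $a(-n),\beta(-n)$ in $\Pi(0)$---is sound and is the honest way to handle the critical level, where simplicity is unavailable. The freeness of $V_{-2}(\sl_2)$ over its center that you invoke is the Feigin--Frenkel description; citing it is appropriate here. The paper does not address injectivity at all, so your argument goes beyond what the authors supply.
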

For any $\chi(z) = \sum_{n \in {\Z} } \chi(n)  z^{-n-2} \in {\C}((z)) $ let $M_T(\chi(z))$
be the $1$-dimensional $M_T(0)$--module such that $T(n)$ acts as multiplication with $\chi(n) \in {\C}$.

We have:

\begin{theorem} \label{wak-nondeg-tm} Let $\lambda  \ne 0$.
Let $$ \chi (z) = \frac{\lambda \mu }{ z^3} +  c(z), \quad c(z)= \sum_{n \le  0 } \chi (n) z ^{-n-2} \in {\C}((z)). $$
Then
$$V_{\widehat{\sl_2} } (\lambda, \mu, -2, c(z) ) \cong M_T( \chi(z) ) \otimes \Pi_{\lambda }. $$
\end{theorem}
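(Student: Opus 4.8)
The plan is to exhibit $M_T(\chi(z)) \otimes \Pi_{\lambda}$ as an irreducible $\widehat{\sl_2}$--module at the critical level carrying a Whittaker vector of type $(\lambda,\mu)$ whose central character is exactly $c(z)$, and then to match it with $V_{\widehat{\sl_2}}(\lambda,\mu,-2,c(z))$ via the universal property together with irreducibility of both sides. By Proposition \ref{embedding-new} the tensor product $M_T(\chi(z)) \otimes \Pi_{\lambda}$ is a module for $M_T(0) \otimes \Pi(0)$, hence a restricted $V_{-2}(\sl_2)$--module and so an $\widehat{\sl_2}$--module of level $-2$; write $v = 1_{\chi} \otimes w_{\lambda}$ for the distinguished vector, where $1_{\chi}$ generates the one--dimensional module $M_T(\chi(z))$.

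First I would settle irreducibility. By \eqref{def-e-2}--\eqref{def-h-2} the fields $e(z)=a(z)$ and $h(z)=-2\beta(z)=\varphi(z)$ involve only the $\Pi(0)$ factor and generate precisely the $\widehat{\frak b}_1$--action; since in this realization $h=\varphi$ we have $\widehat{\frak b}_1 = \widehat{\frak b}$. As $M_T(\chi(z))$ is one--dimensional, $M_T(\chi(z)) \otimes \Pi_{\lambda} \cong \Pi_{\lambda}$ as a $\widehat{\frak b}$--module, and the latter is irreducible by Lemma \ref{ired-7}(ii). Hence $M_T(\chi(z)) \otimes \Pi_{\lambda}$ is already irreducible over $\widehat{\frak b}$, a fortiori over $\widehat{\sl_2}$.

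Next I would check that $v$ is a Whittaker vector of the correct type and central character. From $e(z)=a(z)$ and the defining relations of $\Pi_{\lambda}$ (Theorem \ref{ired-pi}) one gets $e(0)v=\lambda v$ and $e(n)v=0$ for $n\ge 1$, and from $h(z)=-2\beta(z)$ with the annihilation of the Fock vacuum by positive Heisenberg modes one gets $h(n)v=0$ for $n\ge 1$. The crucial point is the action of $f$. Since $T(z)$ is central it acts on $M_T(\chi(z)) \otimes \Pi_{\lambda}$ as the scalar field $\chi(z)$, so by \eqref{def-f-2} the field $f(z)$ acts on this module as
\begin{equation*}
f(z) = -:a^{*}(z)^2 a(z): \; - 2\partial_z a^{*}(z) + \chi(z)\, a^{-1}(z).
\end{equation*}
Using $a^{-1}(z)=\sum_n a^{-1}(n)z^{-n+1}$ together with $a^{-1}(0)w_{\lambda}=\tfrac{1}{\lambda}w_{\lambda}$ and $a^{-1}(n)w_{\lambda}=0$ for $n\ge 1$, the lowest term of $a^{-1}(z)w_{\lambda}$ is $\tfrac{1}{\lambda}z\,w_{\lambda}$; since $\chi(z)=\lambda\mu z^{-3}+c(z)$ with $c(z)=\sum_{n\le 0}c_n z^{-n-2}$, the product $\chi(z)a^{-1}(z)w_{\lambda}$ has lowest power $z^{-2}$ with coefficient $\mu\, w_{\lambda}$. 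A bookkeeping of powers of $z$ shows that $-:a^{*}(z)^2 a(z):\,w_{\lambda}$ involves only powers $z^{p}$ with $p\ge -1$ (all annihilation modes $a^{*}(m)$, $m\ge 1$, and $a(n)$, $n\ge 1$, kill $w_{\lambda}$, leaving only $a^{*}$-modes $\le 0$ and $a$-modes $\le 0$) and $\partial_z a^{*}(z)w_{\lambda}$ only powers $p\ge 0$; neither reaches $z^{-2}$ or lower. Hence $f(1)v=\mu v$ and $f(n)v=0$ for $n\ge 2$, so $v$ is a Whittaker vector of type $(\lambda,\mu)$. Finally $T(n)v=\chi(n)v$, i.e. $(T(n)-c_n)v=0$ for all $n\le 0$, which fixes the central character.

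These relations show that $v$ satisfies all the defining relations of $V_{\widehat{\sl_2}}(\lambda,\mu,-2,c(z))$, so the universal property yields a nonzero $\widehat{\sl_2}$--homomorphism
\begin{equation*}
\Phi : V_{\widehat{\sl_2}}(\lambda,\mu,-2,c(z)) \longrightarrow M_T(\chi(z)) \otimes \Pi_{\lambda}, \qquad w_{\lambda,\mu,-2}\mapsto v.
\end{equation*}
The source is simple (Lemma \ref{lemma5-1}(c)) and the target is irreducible by the second step, so $\Phi$ is injective with nonzero image in an irreducible module, hence an isomorphism. I expect the one genuinely delicate step to be the verification that $f(1)v=\mu v$: it is exactly here that the pole $\lambda\mu z^{-3}$ of $\chi(z)$ must meet the $z^{+1}$ leading behaviour of the localized field $a^{-1}(z)$ to reproduce the non-degenerate eigenvalue $\mu$, which is the whole reason for enlarging $M$ to $\Pi(0)$; the remaining relations and the power counting for the other two summands of $f(z)$ are routine.
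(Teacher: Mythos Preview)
Your proof is correct and follows essentially the same route as the paper: establish irreducibility of the target via Lemma \ref{ired-7}, verify that $v=1_{\chi}\otimes w_{\lambda}$ is a Whittaker vector of type $(\lambda,\mu)$ with the correct central character, and conclude. You are a bit more explicit than the paper in two places: you do the $f(1)v=\mu v$ computation by power-counting the three summands of $f(z)$ (the paper simply reads it off from the form $f=[T(-2)-(\alpha(-1)^2-\alpha(-2))]a^{-1}$, noting $f(1)v=T(1)\cdot a^{-1}(0)v=\lambda\mu\cdot\tfrac{1}{\lambda}v$), and you spell out the final isomorphism via the universal property together with simplicity of both sides, which the paper leaves implicit.
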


\begin{proof}
 Irreducibility of  $M_T( \chi(z) ) \otimes \Pi_{\lambda }$  as a $\widehat{\frak b}_1$--module follows from Lemma \ref{ired-7}. Therefore $M_T( \chi(z) ) \otimes \Pi_{\lambda }$ is a simple $\widehat{\sl_2}$--modules at the critical level on which $T(z)$ acts as $\chi(z)$. It remains to prove that
 $M_T( \chi(z) ) \otimes \Pi_{\lambda  }$ is generated by Whittaker vector of type $(\lambda, \mu)$.

  By construction we have
 \bea
 e(0) (1 \otimes w_{\lambda} ) &=& 1 \otimes a(0) w_{\lambda } = \lambda (1 \otimes w_{\lambda} ), \nonumber \\
 f(1) (1 \otimes w_{\lambda } & = & T(1).1  \otimes a^{-1} (0)  w_{\lambda } = \mu  (1 \otimes w_{\lambda} ), \nonumber \\
 e(n+1)  (1 \otimes w_{\lambda} ) &=&  h(n+1)  (1 \otimes w_{\lambda} ) =  f(n+2)  (1 \otimes w_{\lambda} ) =0\quad (n \ge 0). \nonumber
 \eea
 Therefore $1 \otimes w_{\lambda}$ is a Whittaker vector of type $(\lambda, \mu)$. The proof follows.\end{proof}

\section{Realization of   $\widetilde{\sl_2}$--modules and proof of Theorem \ref{classification-quotients}}
\label{realization-d}

In this section we shall see that all irreducible degenerate Whittaker   $\widetilde{\sl_2}$--modules at the critical level can be also constructed by using explicit realization from
Section \ref{wak-nondeg}.  In particular we will show that for every $\HH$--module $X$, the tensor product $X \otimes \Pi_{\lambda}$ becomes a $\widetilde{\sl_2}$--module.  We shall use this construction to prove  Theorem \ref{classification-quotients} on classification of irreducible degenerate Whittaker modules at the critical level.

First we notice that if we set $d =- L(0)$, then the module $\Pi_{\lambda}$ from Section \ref{wak-nondeg} becomes a $\widetilde{\sl_2}$--module. Moreover, for every $\HH$--module $X$, the tensor product $X \otimes \Pi_{\lambda}$ becomes a $\widetilde{\sl_2}$--module.  More precisely we have the following result.

\begin{theorem} \label{ired-constr-1} Assume that $\lambda \ne 0$. Then the following holds.
\begin{itemize} \item[(1)]  $\Pi_{\lambda}\cong V_{\widetilde{sl_2}} (\lambda, 0, -2, {\bf 0} )$  as $\widetilde{\sl_2}$--modules.
 \item[(2)] Assume that $X$ is a simple, restricted  $\HH$--module. Then $X\otimes \Pi_{\lambda}$ is a simple $\widetilde{\sl_2}$--module at the critical level where the action of the degree operator is given by
 $${\bf d} =  d\otimes \mbox{Id} - \mbox{Id} \otimes L(0). $$
\end{itemize}
  \end{theorem}
\begin{proof}
First we notice that $$\Pi_{\lambda}\cong V_{\widetilde{sl_2}} (\lambda, 0, -2, {\bf 0} )$$ as $\widehat{sl_2}$--modules. By using relation (\ref{L(0)}) and the realizations in Proposition \ref{embedding-new} we see that  on $\Pi_{\lambda}$  we have
$$ L(0) v_{\lambda} = 0,  \ [L(0), x(m)] = -m x(m), \quad x \in \{e, f, h\}. $$
This shows that if we set $d= -L(0)$,  $\Pi_{\lambda}$  is also a $\widetilde{\sl_2}$--module and that the action of $d$ is compatible with the $\widehat{sl_2}$--isomorphism above. This proves the assertion (1).

Let us prove (2). Since $X$ is a restricted $\HH$--module, it is also a module for the vertex algebra $M_T(0)$. This implies that  $X\otimes \Pi_{\lambda}$ is a $V_{-2}(\sl_2)$--module, and therefore a $\widehat{\sl_2}$--module at the critical level. By using the fact that $X$ is a $\HH$--module, the realization from Proposition \ref{embedding-new} will again imply that
$$ [ {\bf d}, x(n)] = n x (n), \quad x \in \{e, f, h\}. $$
This shows that $X\otimes \Pi_{\lambda}$ is a  $\widetilde{\sl_2}$--module. The irreducibility result follows from the irreducibility of $\Pi_{\lambda}$ as $\widehat{\frak b}$--module and Proposition \ref{constr-bt-modules}.
\end{proof}

Next result  will imply the proof of  Theorem \ref{classification-quotients}.

\begin{proposition} Assume that $\lambda \ne 0$. Then the following holds.
\begin{itemize}
 \item[(1)] $U(\HH_-) \otimes  \Pi_{\lambda}\cong V_{\widetilde{sl_2}} (\lambda, 0, -2)$  as $\widetilde{\sl_2}$--modules.
 \item[(2)] If $X$ is a simple $\HH_{-}$--module, then $X \otimes \Pi_{\lambda}$ is isomorphic a simple quotient of $V_{\widetilde{sl_2}} (\lambda, 0, -2)$.
 \item[(3)] Any simple quotient of $ V_{\widetilde{sl_2}} (\lambda, 0, -2)$ is isomorphic to a module $X\otimes \Pi_{\lambda}$ for certain simple $\HH_{-}$--module $X$.
\end{itemize}
 In particular, there is one to one correspondence between the equivalence classes of simple $\HH_{-}$--modules and simple quotients of $V_{\widetilde{\sl_2}}(\lambda,0,-2)$.
 \end{proposition}

\begin{proof}(1)
From Lemma \ref{lemma5-1}(a), $V_{\widetilde{\sl_2}}(\lambda,0,-2)$ has a basis:
$$(\ldots e(-n)^{i_n}\cdots e(-1)^{i_1})(\cdots h(-n)^{j_{n+1}}\cdots h(0)^{j_1})d^{i}(\cdots T(-n)^{k_{n+1}}\cdots T(0)^{k_1}) w $$ for all
 $i\in \Z_{\ge 0}$, $\mathbf{i},\mathbf{j},\mathbf{k}\in \mathbb{M}$, where $w= w_{\lambda,0,-2}$.

By using explicit realization from previous section we see  that
\bea  (\hskip -1pt \ldots e(\hskip -4pt -n)^{i_n}\cdots e(\hskip -4pt -1)^{i_1})(\cdots h(\hskip -4pt -n)^{j_{n+1}}\cdots h(0)^{j_1})d^{i}(\cdots T(-n)^{k_{n+1}}\cdots T(0)^{k_1}) (1 \otimes v_{\lambda} )\nonumber \\ =\hskip -4pt (d^{i}(\cdots T(\hskip -2pt -n)^{k_{n+1}}\hskip -2pt \cdots \hskip -2pt T(0)^{k_1}) )  \otimes (\hskip -2pt \cdots \hskip -2pt e(-n)^{i_n}\hskip -2pt \cdots \hskip -2pt e(-1)^{i_1})(\hskip -2pt \cdots\hskip -2pt  h(-n)^{j_{n+1}}\hskip -2pt \cdots \hskip -2pt h(0)^{j_1}) v_{\lambda}. \nonumber
\eea
 This relation gives an isomorphism $\Psi :  V_{\widetilde{sl_2}} (\lambda, 0, -2) \rightarrow U(\HH_-) \otimes  \Pi_{\lambda} $ such that $\Psi(w) = 1 \otimes v_{\lambda}$.

(2) First we notice that $X  $ can be considered as a simple $\HH$--module by letting $T(n)$ acts as zero on $X$ for $n>0$. From Theorem \ref{ired-constr-1} (1) now it follows that $X \otimes \Pi_{\lambda}$ is a simple  $\widetilde{\sl_2}$--module at the critical level. Take an arbitrary non--trivial vector $\overline{w} \in X$. By construction,   $\overline{w} \ \otimes v_{\lambda}$ is a Whittaker, cyclic vector of type $(\lambda, 0)$. By using the universal property of the Whittaker module  $ V_{\widetilde{\sl_2} } (\lambda, 0, -2) $  we conclude that $X\otimes \Pi_{\lambda}$ is isomorphic to a simple quotient of $V_{\widetilde{\sl_2} } (\lambda, 0, -2) $ .

(3)  By using (1) and    Proposition \ref{constr-bt-modules}
we see that every simple quotient  of $V_{\widetilde{\sl_2} } (\lambda, 0, -2) $ is isomorphic to a module $X \otimes \Pi_{\lambda}$ for certain simple $\HH_{-}$--module $X$. This proves the assertion (3).

The last statement follows directly from (2) and (3).
 \end{proof}

The  results (or part) in this paper were presented on several conferences, including,  Lie Algebra Conference at Harish-Chandra Institute in December of 2014,  Representation Theory Workshop at
Uppsala in May of 2015, Representation Theory  XIV at
Dubrovnik in June of 2015, the 14th National Conference on Lie algebra at Xinyang, China in July of 2015, Vertex Operator Algebra and Related Topics at Chengdu in September of 2015.

\vskip 5mm
\noindent {\bf Acknowledgments.} Part of the research presented in this paper was carried out during the visit of the
first two authors to  Wilfrid Laurier University in April of 2014 and of the second author to University of Waterloo in 2014.  The second author
thanks professors Wentang Kuo and Kaiming Zhao for sponsoring his visit, and University of Waterloo
for providing excellent working conditions. The first author thanks Kaiming Zhao and Wilfrid Laurier University for hospitality during his visit. At  last we sincerely thank the referee for   nice suggestions to make the paper more readable.

D.A. is partially supported by the Croatian Science Foundation under the project 2634.

R.L. is partially supported by NSF of China (Grants 11471233, 11371134) and a Project Funded by the Priority Academic Program Development of Jiangsu Higher Education Institutions.

K.Z. is partially supported by  NSF of China (Grants 11271109, 11471233), NSERC (Grant 311907-2015) and University Research Professor Grant at WLU(244382).

\

\end{document}